\newtheorem{Theorem}{Theorem}[section]
\newtheorem{Lemma}[Theorem]{Lemma}
\newtheorem{PrincipleLemma}[Theorem]{Principle Lemma}
\newtheorem{Corollary}[Theorem]{Corollary}
\theoremstyle{definition}
\newtheorem{Definition}[Theorem]{Definition}
\newtheorem{Remark}[Theorem]{Remark}
\newtheorem{Ex}[Theorem]{Example}
\begin{document}

\title[Structurally stable non-degenerate singularities]{Structurally stable non-degenerate singularities \\ of integrable systems}
\author[Kudryavtseva, Oshemkov]{E.~A.\ Kudryavtseva$^{*,**,1}$ and A.~A.\ Oshemkov$^{*,**,2}$}

\thanks{
Affiliations:\\
$^*$ Faculty of Mechanics and Mathematics, Moscow State University, Leninskie Gory 1, 119991Moscow, Russia, \\
$^{**}$ Moscow Center for Fundamental and Applied Mathematics, Leninskie Gory 1, 119991 Moscow, Russia \\
Email: $^1$\url{eakudr@mech.math.msu.su}, $^2$\url{oshemkov@mech.math.msu.su}}


\begin{abstract}
In this paper, we study singularities of the Lagrangian fibration given by a completely integrable system. We prove that a non-degenerate singular fibre satisfying the so-called connectedness condition is structurally stable under (small enough) real-analytic integrable perturbations of the system. In other words, the topology of the fibration in a neighbourhood of such a fibre is preserved after any such perturbation.
As an illustration, we show that a simple saddle-saddle singularity of the Kovalevskaya top is structurally stable under real-analytic integrable perturbations, but structurally unstable under $C^\infty$-smooth integrable perturbations.


MSC: 37J35, 37J39, 53D20, 70E40
\end{abstract}

\maketitle

\section{Introduction} \label {sec:1}

In this work, we study singularities of integrable systems. Recall that an integrable system is specified by a triple $(M^{2n},\omega,F)$, where $(M,\omega)$ is a symplectic $2n$-manifold and 
$$
F=(f_1,\dots,f_n):M\to\mathbb{R}^n
$$
is a {\em momentum map}, consisting of $n$ almost everywhere independent functions $f_i$ that pairwise Poisson commute: $\omega(X_{f_i},X_{f_j})=0$ for all $i,j=1,\dots,n$, where $X_{f_i}$ is defined by the rule $\omega(\cdot,X_{f_i})=\mathrm{d} f_i$. 

The momentum map $F$ naturally gives rise to a (singular) Lagrangian fibration on $M$ whose fibers are connected components of the common level sets $F^{-1}(a)$, $a\in\mathbb{R}^n$.
One can also write this fibration as the quotient map 
$$
\mathcal F: M \to B,
$$
where $B$ is the set of connected components of $F^{-1}(a)$, $a\in\mathbb{R}^n$, equipped with the quotient topology \cite{fom}. The space $B$ is usually referred to as the {\em bifurcation complex} (or the {\em unfolded momentum domain}) of the system.

\begin{Definition} \label{def:equiv}
Two integrable systems $(M_i,\omega_i,F_i)$, $i=1,2$, will be called {\em equivalent} 
(resp.\ {\em symplectically equivalent})
if there exists a homeomorphism 
(resp.\ symplectomorphism) $\Phi:M_1\to M_2$ and a homeomorphism $\phi:B_1\to B_2$ such that $\phi\circ\mathcal F_1=\mathcal F_2\circ\Phi$. The systems will be called {\em equivalent in a strong sense} 
or {\em left-right equivalent}
(resp.\ {\em symplectically equivalent in a strong sense})
if there exists a homeomorphism 
(resp.\ symplectomorphism) $\Phi:M_1\to M_2$ and a diffeomorphism $J:U_1\to U_2$ such that $J\circ F_1=F_2\circ\Phi$, for some neighbourhoods $U_i$ of $F_i(M_i)$ in $\mathbb{R}^n$, $i=1,2$. 
\end{Definition}

Consider the (local) Hamiltonian $\mathbb{R}^n$-action on $M$ generated by the momentum map $F$, i.e.\ by the (local) flows of the vector fields $X_{f_i}$. Orbits of this (local) action will be called simply {\em orbits}. 
Note that all fibers are invariant under the (local) flows of the vector fields $X_{f_i}$, thus each fiber is a union of orbits.
If the map $F$ is proper then the flows of the vector fields $X_{f_i}$ are complete, and we have a usual (well-defined) $\mathbb{R}^n$-action.

By a {\em singularity} of an integrable system, we will mean (following \cite{zung96, zung96a, zung00, zung03a, bol:osh06, han07, bgk}) the fibration germ at either a singular orbit (or its subset) or a singular fiber called {\em local} and {\em semilocal} {\em singularities}, resp. 
We note that, in literature, there is also the word ``semiglobal'' instead of our ``semilocal'', see e.g.\ \cite{vu2003}.
We recall that a point $m_0\in M$ is called a {\em singular} point of this fibration if $\operatorname{rank} \mathrm{d} F(m_0)<n$. An orbit is called {\em singular} if it contains a singular point (so, all its points are singular). A fiber is called {\em singular} if it contains at least one singular point; the minimal rank of singular points belonging to this fiber is called {\em rank} of the fiber.

Topology and geometry of integrable Hamiltonian systems with non-degenerate singularities have been studied from local \cite{vey, ler87, eli, ito91, zung96, zung96a, bau:zung97, zung:mir04, zung04},
semilocal \cite {fom, ler:uma2, zung96, zung96a, BF, ler00, kud:lep11, kud12} and global viewpoints \cite{fom, BF, bol:fom:ric, zung04}.
N.T.\ Zung developed a semilocal topological classification of  non-degenerate singularities \cite {zung96, zung96a}, and reduced a global topological (resp.\ symplectic) classification to rough topological (resp.\ symplectic) classification for ``generic'' integrable systems with singularities \cite {zung03a, zung04}.

\subsection{Structurally stable singularities}

Our central object will be {\em structurally stable} singularities. 
Informally speaking, a singularity is called structurally stable if the topology of the fibration is preserved after any (small enough) real-analytic integrable perturbations of the system.
Let us proceed with precise formulations.

In this paper, we assume that the manifold $M$, the symplectic structure $\omega$ and the momentum map $F$ are {\em real-analytic}. In the following definition,
$\|\ \|_0$ denotes the $C^0$-norm on the space of real-analytic pairs $(\omega^{\mathbb C},F^{\mathbb C})$ on $U_0^{\mathbb C}$.
Here $U_0^{\mathbb C}$ denotes a (small) open complexification of a neighbourhood $U_0$, while $\omega^{\mathbb C},F^{\mathbb C}$ are holomorphic extensions of $\omega,F$ to $U_0^{\mathbb C}$.

\begin{Definition} [{\cite[Def.~4.1]{kud:toric}}] \label {def:stab}
A compact subset $K$ of a singular orbit or fiber (and the singularity at $K$) of an integrable system $(M,\omega,F)$ will be called {\em structurally stable} (resp.\ {\em symplectically structurally stable}) if $K$ has a neighbourhood $U_0$ and its (small) open complexification $U_0^{\mathbb C}$ 
such that, for any smaller neighbourhood $U_1$ with a compact closure $\overline{U_1}\subset U_0$, there exists $\varepsilon>0$ 
satisfying the following condition: for any real-analytic integrable perturbation $(U_0,\tilde\omega,\tilde F)$ of $(U_0,\omega|_{U_0},F|_{U_0})$ such that $\|\tilde\omega^{\mathbb C}-\omega^{\mathbb C}\|_0+\|\tilde F^{\mathbb C}-F^{\mathbb C}\|_0<\varepsilon$, the integrable systems $(U,\omega|_U,F|_U)$ and $(\tilde U,\tilde\omega|_{\tilde U},\tilde F|_{\tilde U})$ are equivalent (resp.\ symplectically equivalent), cf.\ Definition \ref {def:equiv}, for some neighbourhoods $U,\tilde U\subseteq U_0$ containing $U_1$.\footnote{The notion of structural stability is known for $C^1$ vector fields (or flows) that are defined on a compact domain $U_0$ and satisfy a transversality condition on the boundary of $U_0$, in which case one has $U_1=U=\tilde U=U_0$.
However, for fibration germs, the domain $U_0$ is unfixed, and a transversality condition on $\partial U_0$ is often not fulfilled. We overcome these difficulties by using $U_1,U,\tilde U$.} In a similar way, structural stability is defined for an arbitrary compact subset $K$ of $M$.

If the integrable systems $(U,\omega|_U,F|_U)$ and $(\tilde U,\tilde\omega|_{\tilde U},\tilde F|_{\tilde U})$ are equivalent (resp.\ symplectically equivalent) in a strong sense, the singularity will be called {\em structurally stable} (resp.\ {\em symplectically structurally stable}) {\em in a strong sense}.
In a similar way, one defines {\em structural stability under integrable perturbations of some class}, e.g.\ the classes of {\em $C^\infty$ perturbations}, {\em $G$-symmetry-preserving perturbations} (where $G$ is a symmetry group of the singularity), {\em parametric perturbations} (analytically or smoothly depending on a small parameter) etc.
\end{Definition}

A Morse critical point of a smooth function on a surface can be viewed as a simplest singularity of integrable Hamiltonian systems with 1 d.f. 
It is well known that, due to the Morse lemma, Morse critical points are structurally stable,
moreover they are symplectically structurally stable \cite{cdv:vey79}. Non-degenerate singularities (cf.\ Sec.~\ref {subsec:nondeg}) are natural generalization of Morse critical points, and locally they are direct products of elliptic, hyperbolic, focus-focus and regular components (Theorems \ref {thm:1}, \ref {thm:2}). 

As we noted above, non-degenerate singularities have been extensively studied.
Nevertheless, some questions on structural stability of semilocal non-degenerate singularities remained open until now, and we give solutions to them in this paper in the real-analytic case (Theorems \ref {thm:stab:help}, \ref {thm:stab} and \ref {thm:stab:}, Corollary \ref {cor:stab:help}, Examples \ref {ex:complexity} and \ref {exa:koval}).

Below we mention some known results on structural stability of singularities:

1) Infinitesimal stability (i.e.\ stability under infinitesimal integrable deformations of the system \cite[Def.~8]{gia}) was studied for 2-degrees of freedom integrable systems, namely: non-degenerate rank-0 and rank-1 singular points and a rank-1 parabolic singular point are infinitesimally stable \cite[Def.~9, Theorems 2 and 3]{gia}.

2) Structural stability was proved \cite[Proposition 3.6]{izos:diss} for focus fibres of any dimension, satisfying connectedness condition (iii, iv) (or (v, vi)) of Theorem \ref {thm:stab:help} (such singularities were called {\em irreducible} in \cite{izos:diss}).

3) Structural stability under ``component-wise'' $C^\infty$ integrable perturbations was proved \cite{osh-tuz} for saddle-saddle fibers satisfying connectedness condition (v, vi) of Theorem \ref {thm:stab:help}.

4) Symplectic structural stability in a strong sense of non-degenerate compact orbits is known in real-analytic case \cite[Example 4.2 (A)]{kud:toric} (see also Theorem \ref {thm:stab:}).

5) In contrast to elliptic singularities (which are symplectically structurally stable due to the Eliasson Theorem \ref{thm:1}), simple semilocal singularities of hyperbolic and focus-focus types are symplectically structurally unstable. This follows from the presence of ``moduli'' in their symplectic classifications \cite {dmt94, vu2003}. Moreover, ``moduli'' also appear even in smooth classification for some classes of focus singularities of arbitrary dimension \cite{bol:izos}, which therefore are smoothly (and, hence, symplectically) structurally unstable.

6) For a parabolic singular point (cf.\ $I$ in Fig.~\ref {fig:koval} (b)), structural stability and $C^\infty$-smooth structural stability follow from \cite{ler87}. In the analytic case, symplectic structural stability of a parabolic point follows from \cite[Theorem 3]{var:giv82} (note that a parabolic point is infinitesimally non-degenerate, see \cite[Theorem 5.25]{zol} for a proof).

7) Parabolic orbits and cuspidal tori are structurally stable due to \cite{ler87}, moreover they are $C^\infty$-smoothly structurally stable due to \cite{kud:mar21}. However they are not symplectically structurally stable, because of the presence of ``moduli'' in their symplectic classifications (see \cite {bgk} for real-analytic case, \cite {kud:mar21a} for the smooth and real-analytic cases).

8) Structural stability under integrable perturbations preserving a Hamiltonian $(S^1)^{n-1}$-action (for $n$-degree of freedom integrable systems) was proved for many degenerate local singularities, e.g., parabolic orbits with resonances \cite{kal} (which are smoothly structurally stable when the resonance order is different from $4$ \cite{han07, kud:toric}), their parametric bifurcations \cite{han07}, periodic integrable Hamiltonian Hopf bifurcation \cite{vdm, han07} and its hyperbolic analogue \cite[Sec.~2]{ler00}, periodic integrable Hamiltonian Hopf bifurcations with resonances and their parametric bifurcations \cite{dui}, normally-elliptic parabolic orbits \cite{bro93}, normally-hyperbolic parabolic orbits etc. 
The above $F$-preserving Hamiltonian $(S^1)^{n-1}$-action is generated by $n-1$ functions, some of which are real-analytic functions multiplied with $\sqrt{-1}$ (in the real-analytic case) \cite[Example 3.12]{kud:toric}.
It is conjectured in \cite[Example 4.2 (B)]{kud:toric} that, using ``hidden'' torus actions, one can prove structural stability in a strong sense of the singular orbits mentioned above in real-analytic case.

9) Structural stability under real-analytic {\em parametric integrable perturbations} can be proved for the singularities mentioned in item 7 from above, via the convergence of the Birkhoff normal form \cite {zung05} and its analytic dependence on the perturbation parameters. 

In this paper, we prove (Theorem \ref {thm:stab}) that a non-degenerate semilocal singularity is structurally stable under real-analytic integrable perturbations, 
provided that it satisfies the connectedness condition (Definition \ref {def:conn}). We also give several criteria (Theorem \ref {thm:stab:help} and Corollary \ref {cor:stab:help}) for a semilocal singularity to satisfy the required assumptions (connectedness condition and non-degeneracy). As an illustration, we show that a saddle-saddle singularity of the Kovalevskaya top (and an arrangement of semilocal singularities containing this singularity) is structurally stable under real-analytic integrable perturbations, but structurally unstable under $C^\infty$-smooth integrable perturbations (Example \ref {exa:koval}).

\subsection {Non-degenerate singularities: local symplectic normal form} \label{subsec:nondeg}

Sufficient conditions for structural stability of a singularity are given in Theorems \ref {thm:stab} and \ref {thm:stab:}.
For their formulation, let us recall the notion of a non-degenerate singularity.

A singular point $m_0$ of rank $0$ is called {\em non-degenerate} (cf.\ e.g.\ \cite{eli, des90, ler94, zung96a, BF}) if the linearizations $A_j$ of the Hamiltonian vector felds $X_{f_j}$ at the singular point span a Cartan subalgebra of the Lie algebra of the Lie group $\operatorname{Sp}(T_{m_0}M,\omega|_{m_0})\simeq\operatorname{Sp}(2n,\mathbb{R})$, i.e.\ the operators $A_1,\dots,A_n$ span an $n$-dimensional commutative subalgebra and there exists a linear combination $A=\sum\limits_{j=1}^nc_jA_j$, $c_j\in\mathbb{R}$, having a simple spectrum:
$|\operatorname{Spec} A|=2n.$
A singular point $m_0$ of rank $r$ is called {\em non-degenerate} (cf.\ e.g.\ \cite{des90}) if the  rank-$0$ singular point of the corresponding reduced integrable Hamiltonian system with $n-r$ degrees of freedom (obtained by local symplectic reduction under the action of $f_1,\dots,f_r$ such that $\mathrm{d} f_1\wedge\dots\wedge \mathrm{d} f_r|_{m_0}\ne0$) is non-degenerate.
A singular orbit (respectively, fiber) is called {\em non-degenerate} if each singular point contained in this orbit (fiber) is non-degenerate.
Notice that, for an orbit, this condition holds automatically if at least one of its points is non-degenerate, but for a fiber it is not the case. 

The following assertion (known as Eliasson's theorem) is formulated for reader's interest, but it is not used any further in this article. Its proof is known for non-degenerate corank 1, elliptic, and focus-focus corank 2 singularities \cite{cdv:vey79, eli, vu:wac2013, zung:mir04} (more specifically, the rank 0 case was treated in \cite {cdv:vey79, eli, vu:wac2013}, resp., and the general case follows from rank 0 case due to \cite[Corollary 3.5]{zung:mir04}).
It is not clear whether there exists in the literature a complete proof of this assertion in the general case for singularities of all types.

\begin{Theorem} [Smooth local normal form] \label{thm:1}
For each non-degenerate singular point $m_0\in M$, the fibration is locally symplectically equivalent to the direct product of a regular fibration and several copies of elliptic, hyperbolic and focus-focus singularities, i.e., to a canonical system
\begin{equation} \label {eq:eli}
\begin{array} {ll}
h_s=\lambda_s & \mbox{for } 1\le s\le r,\\
h_{r+j}=\frac12(x_j^2 + y_j^2) & \mbox{for } 1\le j\le k_e,\\
h_{r+j}=x_{j}y_{j} & \mbox{for } k_e+1\le j\le k_e+k_h,\\
h_{r+j}=x_{j}y_{j} + x_{j+1}y_{j+1} & \mbox{and} \\
h_{r+j+1}=x_{j+1}y_{j} - y_{j+1}x_{j} & \mbox{for } j=k_e+k_h+2i-1,\ 1\le i\le k_f, 
\end{array} 
\end{equation}
\begin{equation} \label {eq:eli:}
\omega_{can}=
\sum\limits_{s=1}^r\mathrm{d}\lambda_s\wedge \mathrm{d}\varphi_s
+\sum\limits_{j=1}^{n-r}\mathrm{d} x_j\wedge \mathrm{d} y_j.
\end{equation}
\end{Theorem}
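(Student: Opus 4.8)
The plan is to reduce the statement to the rank-$0$ case and then prove a symplectic normal form there, treating the elliptic, hyperbolic and focus-focus blocks separately before assembling them. First I would dispose of the regular directions. After relabelling we may assume $\mathrm{d} f_1\wedge\dots\wedge \mathrm{d} f_r|_{m_0}\ne 0$, and the Darboux--Carathéodory theorem supplies local symplectic coordinates $(\lambda_s,\varphi_s)_{s\le r}$ transverse to a symplectic slice, in which $f_s=\lambda_s$ and $\omega=\sum_s \mathrm{d}\lambda_s\wedge \mathrm{d}\varphi_s+\omega'$, with $\omega'$ symplectic on the slice. Restricting $f_{r+1},\dots,f_n$ to $\{\lambda=\varphi=0\}$ produces a reduced integrable system with $n-r$ degrees of freedom having a rank-$0$ non-degenerate singular point (this is exactly the reduction in the Definition of non-degeneracy). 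The fact that the germ splits, up to a symplectomorphism, as the direct product of the trivial part $(\lambda_s,\varphi_s)$ with the reduced rank-$0$ germ is the content of Eliasson's splitting lemma, as used in \cite[Corollary 3.5]{zung:mir04}; it reduces the theorem to the case $r=0$.

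At a rank-$0$ point I would start from the linear normal form. By hypothesis the linearizations $A_1,\dots,A_n$ span a Cartan subalgebra of $\mathfrak{sp}(T_{m_0}M,\omega|_{m_0})\simeq\mathfrak{sp}(2n,\mathbb{R})$, so Williamson's classification of Cartan subalgebras of $\mathfrak{sp}(2n,\mathbb{R})$ furnishes linear symplectic coordinates $(x_j,y_j)$ in which the quadratic parts $q_j$ of the $f_j$ are simultaneously the canonical quadratic forms of \eqref{eq:eli}: the elliptic oscillators $\tfrac12(x_j^2+y_j^2)$, the hyperbolic forms $x_jy_j$, and the focus-focus pairs $x_jy_j+x_{j+1}y_{j+1}$, $x_{j+1}y_j-y_{j+1}x_j$. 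Fixing a Darboux chart I may assume $\omega=\omega_{can}$ identically, so the remaining problem is purely to normalize the functions by symplectomorphisms fixing $m_0$, and at the \emph{linear} level the singularity is already the advertised product of elliptic, hyperbolic and focus-focus blocks.

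The analytic core is to promote this linear decomposition to a genuine symplectic conjugacy of the full commuting functions. I would first run a formal Birkhoff normalization: killing higher-order terms by the Hamiltonian flows of homogeneous generating functions $g$, i.e.\ solving the homological equations $\{q_i,g\}=(\text{obstruction})$, one obtains a formal symplectomorphism after which each $f_j$ is a formal power series in the quadratic invariants $q_i$ alone, whence the level sets coincide with those of the $h$'s. Then convergence (analytic case) or smoothness of the normalizing map must be established blockwise. For the elliptic and focus-focus blocks this is comparatively soft, because each carries an effective Hamiltonian $S^1$-action (the oscillator rotation $\tfrac12(x_j^2+y_j^2)$ and the focus rotation $x_{j+1}y_j-y_{j+1}x_j$); averaging against this compact symmetry, in the symplectic-and-integrable form of Bochner linearization, yields a convergent normalization, which is precisely the route of \cite{eli} for the elliptic case and of \cite{vu:wac2013} for the focus-focus corank-$2$ case. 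The single hyperbolic corank-$1$ factor I would handle by the one-degree-of-freedom argument of Colin de Verdière--Vey \cite{cdv:vey79} (a Sternberg-type hyperbolic linearization made symplectic).

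The hard part will be the hyperbolic directions and, more generally, assembling the blocks into a true product. Unlike the elliptic and focus cases, the hyperbolic saddle admits no compact symmetry to average against, so one must control resonances and invoke convergence of the Birkhoff normal form for analytic integrable systems \cite{zung05} rather than a soft averaging argument. Moreover, commuting functions that factor at the linear level need not factor nonlinearly, so after normalizing each corank-$1$ and corank-$2$ building block one still needs a splitting step showing that the germ is their symplectic direct product; this is exactly where I would invoke \cite[Corollary 3.5]{zung:mir04}, which reduces the mixed, higher-corank, higher-rank situation to the already-solved elementary blocks. This is also the step flagged as not fully settled in the literature for all types, and it is where I expect the genuine difficulty to lie; completing it would yield the symplectic equivalence to $(\omega_{can},h)$ asserted in \eqref{eq:eli}--\eqref{eq:eli:}.
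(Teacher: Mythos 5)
First, a structural point: the paper contains no proof of Theorem \ref{thm:1}. The authors state it explicitly ``for reader's interest,'' never use it, and remark that it is not clear whether a complete proof in the general case (all Williamson types) exists in the literature at all; they only cite \cite{cdv:vey79, eli, vu:wac2013, zung:mir04} for the corank-one, elliptic and focus-focus cases. So there is no proof in the paper to match yours against; the only question is whether your outline closes the known gap. It does not, and you in effect concede this in your last paragraph.

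The gap is the assembly step, and neither of the two results you invoke there does the job. \cite[Corollary 3.5]{zung:mir04} is the rank-reduction tool: it splits off the regular part and reduces a non-degenerate rank-$r$ point to a rank-$0$ one (this is exactly how the paper uses it). It does \emph{not} decompose a rank-$0$ germ of mixed Williamson type $(k_e,k_h,k_f)$ into a symplectic direct product of elementary elliptic, hyperbolic and focus blocks; if it did, Theorem \ref{thm:1} would be an immediate corollary of the cited block results and the authors' caveat would be vacuous. Likewise, \cite{zung05} concerns convergence of the Birkhoff normal form for \emph{real-analytic} systems, whereas Theorem \ref{thm:1} is precisely the smooth statement: in the $C^\infty$ category there is nothing to converge, and the hyperbolic directions carry flat-function obstructions that no averaging argument removes (this is the same phenomenon behind the splittable singularities the paper points to in \cite{BF, bol:osh06}). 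So your plan correctly reproduces the known elementary cases, with the same citations the paper gives, and then stalls exactly where the literature stalls. It is also worth noting that in the real-analytic category --- the paper's standing assumption --- the authors prove the stronger Theorem \ref{thm:2} in App.~\ref{sec:app} by a genuinely different mechanism: not blockwise normalization plus a splitting lemma, but persistence and linearization of a hidden Hamiltonian torus action (Lemma \ref{lem:period}, built on \cite{kud:toric}), in which the hyperbolic components, multiplied by $\sqrt{-1}$, generate circle actions on the complexification; this sidesteps the absence of compact symmetry for real hyperbolic blocks, which is the very obstacle your outline cannot overcome. As written, your proposal is an accurate survey of the partial results, not a proof.
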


Thus, the canonical momentum map is defined by {\em regular} components $h_j$ ($1\le j\le r$), {\em elliptic} components $h_{r+j}$ ($1\le j\le k_e$), {\em hyperbolic} components $h_{r+k_e+j}$ ($1\le j\le k_h$) and {\em focus-focus} pairs of components $h_{r+k_e+k_h+2j-1},h_{r+k_e+k_h+2j}$ ($1\le j\le k_f$).
We say that the singular point $m_0$ has {\em Williamson type} $(k_e,k_h,k_f)$ \cite[Def.~2.3]{zung96a}. Notice that $r+k_e+k_h+2k_f=n$ and $r$ is the rank of $m_0$.

In real-analytic case, Theorem \ref {thm:1} admits the following strengthening. 

\begin{Theorem} [Real-analytic local normal form] \label{thm:2}
In real-analytic case, for each non-degenerate singular point $m_0\in M$, 

{\rm(a)} There exists a neighbourhood $U$ of $m_0$, in which the system is symplectically equivalent in a strong sense to \eqref{eq:eli}, \eqref{eq:eli:}, i.e.\ there exist a real-analytic symplectomorphism 
$$
\Phi=(\lambda_1,\varphi_1,\dots,\lambda_r,\varphi_r,x_1,y_1,\dots,x_{n-r},y_{n-r}):(U,\omega)\hookrightarrow(\mathbb{R}^{2n},\omega_{can})
$$
and a real-analytic diffeomorphism germ $J=(J_{1},\dots,J_{n}):(\mathbb{R}^n,F(m_0))\to(\mathbb{R}^n,0)$ such that $\Phi(m_0)=0$ and the map $J\circ F\circ\Phi^{-1}=(h_1,\dots,h_n)$ has a canonical form \eqref{eq:eli}.

{\rm(b)} If the flows of $X_{f_1},\dots,X_{f_n}$ are complete on the orbit $\mathcal{O}$ of $m_0$, then this orbit is diffeomorphic to a cylinder $\mathbb{R}^{r_o}\times(S^1)^{r_c}$ with $r_o+r_c=r$, and 
there exist a neighbourhood $U(\mathcal{O})$ of $\mathcal{O}$, a real-analytic symplectomorphism 
$$
\Phi:(U(\mathcal{O}),\omega)\hookrightarrow (V/\Gamma,\omega_{can})
$$
and a real-analytic diffeomorphism germ $J=(J_{1},\dots,J_{n}):(\mathbb{R}^n,F(m_0))\to(\mathbb{R}^n,0)$ such that the map $J\circ F\circ\Phi^{-1}=(h_1,\dots,h_n)$ has a canonical form \eqref{eq:eli} and $\Phi(\mathcal{O})=\{0\}^r\times\mathbb{R}^{r_o}\times(S^1)^{r_c}\times\{(0,0)\}^{n-r}$.
Here $V=D^r\times\mathbb{R}^{r_o}\times(S^1)^{r_c}\times(D^2)^{n-r}$ with the coordinates $(\lambda_s)_{s=1}^r$, $(\varphi_s)_{s=1}^r$ and $(x_j,y_j)_{j=1}^{n-r}$; 
$\Gamma$ is a finite group (called the {\em twisting group} at $\mathcal{O}$) that acts on $V$ freely and component-wise;
the action of $\Gamma$ on $(S^1)^{r_c}$ is by translations, its action on each hyperbolic disk $D^2$ is by multiplications by $\pm1$, its action on the remaining components is trivial (i.e.\ on $D^r$, on $\mathbb{R}^{r_o}$, on each elliptic disk $D^2$ and on each focus-focus polydisk $D^2\times D^2$), and its action on $(D^2)^{n-r}$ is effective.
\end{Theorem}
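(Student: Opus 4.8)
The plan is to establish part (a) by reducing to a rank-$0$ singularity and invoking the convergence of the simultaneous analytic normalization, and to deduce part (b) by transporting the transverse rank-$0$ model along the orbit and reading off the resulting monodromy. For part (a), I would first perform an analytic symplectic reduction at the rank-$r$ point $m_0$. Choosing $r$ of the integrals (or real-linear combinations thereof) whose differentials are independent at $m_0$ and applying the analytic Carath\'eodory--Jacobi--Lie theorem, I obtain analytic canonical coordinates $(\lambda_s,\varphi_s)_{s=1}^{r}$ that straighten these directions, so that $f_s=\lambda_s$ for $1\le s\le r$ and the remaining integrals descend to a rank-$0$ non-degenerate integrable germ with $n-r$ degrees of freedom on the symplectic transversal $\{\lambda=\varphi=0\}$; every step here stays in the real-analytic category.

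The heart of the matter is the rank-$0$ analytic normal form. Here I would complexify and diagonalize the Cartan subalgebra generated by the quadratic parts $A_{r+1},\dots,A_n$, so that over $\mathbb{C}$ the quadratic parts become $\sum_j p_jq_j$ in suitable complex canonical coordinates, the simple-spectrum condition guaranteeing the genericity of the linear part. The key analytic input is then the convergence of the simultaneous Birkhoff normalization of a non-degenerate commuting family, equivalently the analytic linearization of the associated holomorphic $\mathbb{C}^{n-r}$-action near its isolated fixed point (Vey \cite{vey}, Ito \cite{ito91}, together with the rank-$0$ treatments in \cite{vu:wac2013, zung:mir04}). Passing back to the real form, the invariant real structure on the Cartan subalgebra splits it into elliptic blocks $\tfrac12(x_j^2+y_j^2)$, hyperbolic blocks $x_jy_j$, and focus--focus pairs, which yields the canonical momentum map \eqref{eq:eli}. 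Finally I would construct the base diffeomorphism germ $J$: since the differentials $\mathrm{d} f_1,\dots,\mathrm{d} f_r$ together with the quadratic parts of the transverse integrals are linearly independent (non-degeneracy), an analytic division/implicit-function argument shows that $F$ is carried to $(h_1,\dots,h_n)$ by an invertible analytic germ $J:(\mathbb{R}^n,F(m_0))\to(\mathbb{R}^n,0)$, the required reparametrization of the base, giving strong symplectic equivalence to \eqref{eq:eli}, \eqref{eq:eli:}.

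For part (b), completeness of the flows with $\operatorname{rank}\mathrm{d} F\equiv r$ along $\mathcal{O}$ identifies $\mathcal{O}$ with $\mathbb{R}^r/\Lambda\cong\mathbb{R}^{r_o}\times(S^1)^{r_c}$, where $\Lambda$ is the (rank $r_c$) stabilizer lattice of the $\mathbb{R}^r$-action. I would then transport the transverse rank-$0$ model of part (a) along $\mathcal{O}$ using the commuting complete flows and the analytic action--angle coordinates in the $r$ regular directions; parallel transport around the loops of $(S^1)^{r_c}$ defines a monodromy representation of $\pi_1(\mathcal{O})$ into the group of linear symplectic automorphisms of the transverse fibre $(D^2)^{n-r}$ preserving the normal form. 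Since the integrals $h_j=(J\circ F)_j$ are single-valued, this monodromy fixes each $h_{r+j}$ individually; the connected symmetries (elliptic rotations, hyperbolic scalings, focus--focus flows) are absorbed into the flows, leaving only the component group, generated by the sign flips $(x_j,y_j)\mapsto(-x_j,-y_j)$ on the hyperbolic disks. Its image is the finite twisting group $\Gamma\subseteq(\mathbb{Z}/2)^{k_h}$, which acts effectively and component-wise on $(D^2)^{n-r}$ (trivially on the elliptic and focus--focus disks), trivially on $D^r$ and $\mathbb{R}^{r_o}$, and by the induced translations on $(S^1)^{r_c}$. Taking $V$ to be the associated $\Gamma$-cover yields the analytic symplectomorphism $\Phi:(U(\mathcal{O}),\omega)\hookrightarrow(V/\Gamma,\omega_{can})$, with the $\Gamma$-action free because $\Gamma$ is the deck group of a covering of the manifold $U(\mathcal{O})$ and effective on $(D^2)^{n-r}$ by its very definition as the image of the transverse monodromy.

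The step I expect to be the main obstacle is the analytic convergence of the simultaneous normalization in part (a): unlike the smooth normal form (Theorem \ref{thm:1}), where flat perturbations can be discarded, the real-analytic statement requires genuine small-divisor/majorant estimates valid uniformly across all Williamson types $(k_e,k_h,k_f)$, and in particular a careful treatment of the mixed and focus--focus blocks. A secondary difficulty in part (b) is checking that the monodromy produces exactly the asserted $\Gamma$-action---component-wise, $\pm1$ on the hyperbolic disks and trivial on the remaining components---and that the descent $V\to V/\Gamma$ preserves $\omega_{can}$ analytically.
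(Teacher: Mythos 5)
Your proposal takes a genuinely different route from the paper. For part (a) you reduce to rank $0$ and invoke the classical analytic normal form of Vey--Ito \cite{vey, ito91}; the paper explicitly concedes that (a) ``can be done using \cite{vey}'' but deliberately gives another proof via Lemma \ref{lem:period}: a rescaling trick realizes the germ as an $O(\varepsilon)$-small analytic perturbation of its linearization, and the hidden-torus-action machinery of \cite{kud:toric} (existence of an $F$-preserving Hamiltonian $(S^1)^{n-r}$-action generated by the functions $J_j\circ F$ and $iJ_j^{\mathbb C}\circ F^{\mathbb C}$, plus symplectic linearization of that action) produces $\Phi$ and $J$. For part (b) you propose a flat-bundle/holonomy argument, reading off $\Gamma$ as the image of a monodromy representation of $\pi_1(\mathcal{O})$ in the component group $(\mathbb{Z}_2)^{k_h}$ of hyperbolic sign flips; the paper instead combines the $(S^1)^{r_c}$-action of Ito/Zung with the $(S^1)^{n-r}$-action from (a) and quotes the linearization theorem \cite[Theorem 3.10]{kud:toric} for the resulting $(S^1)^{n-r_o}$-action, which directly outputs the model $(V/\Gamma,\omega_{can})$. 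The paper's route has the advantage of yielding, almost for free, the persistence and rigidity statement Lemma \ref{lem:period}(b), which is what the structural stability theorems actually consume; your route is more classical and self-contained in outline.

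There is, however, a substantive gap in your part (b): the step where the ``connected symmetries are absorbed into the flows,'' so that the holonomy becomes an honest representation into a finite group. A priori, the transport around a loop of $\mathcal{O}$ is an arbitrary analytic fibration-preserving symplectomorphism germ, not a linear map; the claim that any such germ is, modulo the component group, the time-$1$ map $\phi^1_{S\circ J\circ F}$ of a Hamiltonian flow generated by a function $S$ of the integrals is precisely the uniqueness assertion $(\Phi^{-1}\circ\Phi')^2=\phi^1_{S\circ J\circ F}$ of Lemma \ref{lem:period}(a). The paper proves this by a delicate analytic-continuation and rescaling argument starting from a regular point of the complexified singular fiber, and this is exactly where analyticity is essential: in the $C^\infty$ category the automorphism group of the germ is much larger (splittable phenomena), and the theorem in this strong left-right form fails. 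Your plan relegates this to a ``secondary difficulty,'' when it is the crux of (b). A smaller omission occurs in (a): after the symplectic reduction you need the rank-$0$ normal form to depend analytically on the parameters $\lambda$, and you must then repair the symplectic structure, whose cross-terms $\sum_s\mathrm{d}\lambda_s\wedge\mathrm{d} g_s$ are absorbed by the correction $\varphi_s\mapsto\varphi_s+g_s$ as in \cite[Lemma 4.2]{zung:mir04}; your sketch treats the reduction as if it immediately yields the full $2n$-dimensional normal form. Neither gap is fatal --- both can be filled from the literature (\cite{zung03}, \cite{zung:mir04}, or the paper's own Lemma \ref{lem:period}) --- but as written, the hardest analytic content of the theorem is assumed rather than proved.
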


Thus, due to Theorem \ref {thm:2}, in analytic case, the momentum map itself is conjugated (left-right equivalent) to the canonical one.
In non-analytic case, the fibrations are the same, but the momentum maps are not necessarily conjugated, even in the case of a point. Examples of such situations (so-called splittable singularities) can be found in \cite[Fig.~1.9, 1.10, 9.63 and comments to and after them]{BF}, \cite [Sec.~5.3]{bol:osh06}.
This is why a description of structurally stable singularities in the smooth case is more difficult than in analytic case. In this paper, we consider analytic case only.

\begin{Definition} \label {def:Vey:momentum}
The composition $J\circ F$ from Theorem \ref {thm:2} (a) (resp.\ (b)) will be called {\em a Vey momentum map at the point} $m_0$ (resp.\ {\em at the orbit} $\mathcal{O}$). 
\end{Definition}

We note that the non-regular components of the Vey momentum map at a rank-$r$ point (after mutiplying some of them by $\sqrt{-1}$) generate a $(n-r)$-torus action near the point, which shows that these components are well-defined up to additive constants, but these constants can be uniquely chosen in order to make the values of the generating functions equal $0$ at fixed points of the action.

A proof of Theorem \ref {thm:2} is given in App.~\ref {sec:app}. 
Theorem \ref {thm:2} (a) was proved by J.~Vey \cite{vey}, and its equivariant generalization was proved by the first author \cite[Lemma 6.2]{kud:toric}. For a compact orbit $\mathcal{O}$, Theorem \ref {thm:2} (b) was proved in \cite[Theorem 2.1]{zung:mir04} for $C^\infty$ and real-analytic cases (see also \cite[Example 4.2 (A)]{kud:toric} for real-analytic case), and its equivariant generalization was proved \cite[Theorem 4.3]{zung:mir04} for $C^\infty$ and real-analytic cases.

\subsection{The connectedness condition} \label {subsec:conn}

Let $\mathcal{L}$ be a compact singular fiber (perhaps degenerate). 
In this subsection, we will assume that $\mathcal{L}$ is {\em almost non-degenerate} in the following sense: 
$\mathcal{L}$ consists of finitely many orbits, moreover if an orbit $\mathcal{O}_1$ is contained in the boundary of an orbit $\mathcal{O}\subset\mathcal{L}$ then rank of $\mathcal{O}_1$ is less than rank of $\mathcal{O}$.
It is clear that if $\mathcal{L}$ is non-degenerate then it is almost non-degenerate.

\begin{Definition} \label {def:conn}
We say that the singular fiber $\mathcal{L}$ (and the semilocal singularity at $\mathcal{L}$) of rank $r$ satisfies the {\em connectedness condition} if it contains a non-degenerate rank-$r$ orbit $\mathcal{O}_0\subset \mathcal{L}$
such that each of the $k_h+k_f$ subsets 
$$
\mathbb{K}_i:=\{m\in U(\mathcal{L})\mid \mathrm{d} (J_i\circ F)(m)=0\}, 
$$
$i\in\{r+k_e+a\}_{a=1}^{k_h}\cup\{r+k_e+k_h+2b\}_{b=1}^{k_f}$, 
is connected and contains any compact orbit $\mathcal{O}\subset \mathcal{L}$. Here $U(\mathcal{L})$ denotes a small neighbourhood of $\mathcal{L}$, $(k_e,k_h,k_f)$ is Williamson type of a point $m_0\in\mathcal{O}_0$, and $J\circ F$ is a Vey momentum map (Definition \ref {def:Vey:momentum}) at $m_0$ (we note that $J$ depends on the choice of the orbit $\mathcal{O}_0$). 
In practice, for verifying that $\mathbb{K}_i$ is connected, it is enough to check that, for each compact orbit $\mathcal{O}\subset\mathcal{L}$, $\mathbb{K}_i$ contains a path $\gamma_i$ joining $\mathcal{O}$ to $\mathcal{O}_0$.
\end{Definition}

Note that $\mathbb{K}_i=\{m\in U(\mathcal{L})\mid \mathrm{d} f_i(m)=0\}$, provided that the bifurcation diagram of $F|_{U(\mathcal{O}_0)}$ has a standard form (cf.\ Remark \ref {rem:a:b}).

Denote by $\mathbb{K}_{i}^{\mathcal{O}_0}$ the connected component of $\mathbb{K}_{i}$ containing $\mathcal{O}_0$.

\begin{Remark} \label {rem:conn}
We could require in Definition \ref{def:conn} that $\mathcal{O}$ is also contained in the subsets $\mathbb{K}_i$ for elliptic and/or focus-focus components $h_i$, $i\in\{r+a\}_{a=1}^{k_e}\cup\{r+k_e+k_h+2b-1\}_{b=1}^{k_f}$, of the Vey momentum map $J\circ F$ at $m_0\in\mathcal{O}_0$. But this requirement automatically holds, since $\mathbb{K}_i\supseteq \mathcal{L}$ for elliptic components $h_i$ (see \cite[Proposition 2.6]{zung96a}), and $\mathbb{K}_{i}\supseteq\mathbb{K}_{i+1}^{\mathcal{O}_0}$ for focus-focus pairs of components $h_i,h_{i+1}$.
\end{Remark}

\begin{Remark} \label {rem:tilde:K}
Observe that $\mathbb{K}_j^{\mathcal{O}_0}$ is a symplectic Bott critical submanifold of the function $J_j\circ F$ ($j>r$), since $\mathbb{K}_j^{\mathcal{O}_0}$ (resp., its complexification $(\mathbb{K}_j^{\mathcal{O}_0})^\mathbb{C}$) is the fixed point set of the Hamiltonian $S^1$-action generated by the function $J_j\circ F$ (resp., $iJ_j^\mathbb{C}\circ F^\mathbb{C}$) near $\mathbb{K}_j^{\mathcal{O}_0}$, due to Lemma \ref {lem:4} (a, b).
Obviously, all points of $\mathbb{K}_j$ are singular for the momentum map $F|_{U(\mathcal{L})}$. Thus, 
the singular point set of $F|_{U(\mathcal{L})}$ contains $\bigcup\limits_i\mathbb{K}_i\supseteq\bigcup\limits_i\mathbb{K}_i^{\mathcal{O}_0}$.
If $\mathcal{L}$ is non-degenerate, the connectedness condition simply means that the singular point set of $F|_{U(\mathcal{L})}$ is exhausted by $\bigcup\limits_i\mathbb{K}_i^{\mathcal{O}_0}$ and $\mathbb{K}_i^{\mathcal{O}_0}=\mathbb{K}_i$.
Then the submanifold $\mathbb{K}_i$ consists of points of rank $\le\frac12\dim\mathbb{K}_i$, and it is the closure of its open subset $\mathbb{K}_i\setminus\bigcup\limits_{i'\ne i}\mathbb{K}_{i'}$ consisting of non-degenerate rank-$\frac12\dim\mathbb{K}_i$ singular points.
\end{Remark}

\begin{Definition} \label {def:bif:diagram}
The singular values set of the momentum map $F$ is called the {\em bifurcation diagram} of the momentum map. 
\end{Definition}

\begin{Definition} [{\cite[Def.~6.3]{zung96a}, \cite[Def.~9.7]{BF}, \cite{bol:osh06}}] \label {def:non:split}
We say that the singular fiber $\mathcal{L}$ (and the semilocal singularity at $\mathcal{L}$) satisfies the {\em non-splitting condition}\footnote{Such singularities are also called {\em stable} \cite[Sec.~2.10]{fom91} or {\em topologically stable} \cite[Def.~4.5, 5.3, 6.3]{zung96a}.} if $\mathcal{L}$ is non-degenerate and the bifurcation diagram of the momentum map $F$ restricted to a small neighbourhood $U(\mathcal{L})$ of $\mathcal{L}$ coincides with the bifurcation diagram of the momentum map $F$ restricted to a small neighbourhood $U(\mathcal{O})$ of any compact orbit $\mathcal{O}\subseteq\mathcal{L}$.
\end{Definition}

\begin{Remark} \label {rem:a:b}
Suppose that $\mathcal{L}$ satisfies the non-splitting condition (Definition \ref {def:non:split}). Then all compact orbits in $\mathcal{L}$ have the same (minimal for $\mathcal{L}$) rank $r$ and the same Williamson type $(k_e,k_h,k_f)$, which will be called {\em Williamson type of $\mathcal{L}$}. Moreover any orbit in $\mathcal{L}$ is diffeomorphic to $\mathbb{R}^{a+b}\times(S^1)^{r+b}$ and has Williamson type $(k_e,k_h-a,k_f-b)$, for some $a,b\in\mathbb{Z}_+$ \cite[Propositions 2.6 and 3.5]{zung96a}. Therefore (after replacing $F$ by a Vey momentum map $J\circ F$ at a compact orbit $\mathcal{O}_0\subset\mathcal{L}$), we can assume that the bifurcation diagram of $F|_{U(\mathcal{L})}$ has a standard form.
\end{Remark}

Examples of non-degenerate semilocal singularities which do not satisfy the non-splitting condition (and, hence, the connectedness condition, cf.\ 
Corollary \ref {cor:stab:help}) can be found in \cite[Fig.~9.60--9.63 and Comments to them]{BF}, \cite [Lemma 3, Remark 4] {kud:osh}, \cite{koz:osh}.

\begin{Theorem} \label {thm:stab:help}
Suppose that $\mathcal{L}$ is a compact, almost non-degenerate singular fiber. If
\begin{itemize}
\item[{\rm(i)}] the fiber $\mathcal{L}$ satisfies the connectedness condition (Definition \ref {def:conn}),
\end{itemize}
then the non-degeneracy of $\mathcal{L}$ is equivalent to the following:
\begin{itemize}
\item[{\rm(ii)}] the $k_e+k_h+k_f$ submanifolds $\mathbb{K}_i$, $i\in\{r+a\}_{a=1}^{k_e+k_h}\cup\{r+k_e+k_h+2b\}_{b=1}^{k_f}$, from Definition \ref {def:conn} and Remark \ref {rem:tilde:K} are pairwise transversal at every compact orbit $\mathcal{O}\subseteq \mathcal{L}$ and have symplectic pairwise intersections at $\mathcal{O}$;
\end{itemize}
moreover {\rm(i, ii)} implies the following:
\begin{itemize}
\item[{\rm(iii)}] the fiber $\mathcal{L}$ satisfies the non-splitting condition (Definition \ref {def:non:split}).
\end{itemize}

If $\mathcal{L}$ satisfies the non-splitting condition {\rm(iii)}, then the connectedness condition {\rm(i)} is equivalent to the following:
\begin{itemize}
\item[{\rm(iv)}] $\mathbb{K}_i$ is connected for all $i\in\{r+k_e+a\}_{a=1}^{k_h}\cup\{r+k_e+k_h+2b\}_{b=1}^{k_f}$ (or, equivalently, for all $i=r+1,\dots,n$, cf.\ Remark \ref {rem:conn}).
\end{itemize}

If the singularity at 
$\mathcal{L}$ is of almost-direct-product type topologically {\rm (cf.\ \cite[Def.~7.2]{zung96a} or \cite {bol:osh06})}, i.e.\ 
\begin{itemize}
\item[{\rm(v)}] $\mathcal{L}$ is non-degenerate and its small neighbourhood is equivalent to the quotient $(V_1\times\dots\times V_{n-k_f})/\Gamma$ of the direct product of $r$ regular, $k_e$ elliptic, $k_h$ hyperbolic and $k_f$ focus-focus semilocal singularities by a free component-wise action of a finite group $\Gamma$, 
\end{itemize}
then the connectedness condition {\rm(i)} is equivalent to the following:
\begin{itemize}
\item[{\rm(vi)}] the action of $\Gamma$ on each component $V_i$ is transitive on the set of its singular points.
\end{itemize}
\end{Theorem}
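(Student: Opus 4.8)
The plan is to reduce every assertion to a normal-form analysis in a neighbourhood of each compact orbit $\mathcal{O}\subseteq\mathcal{L}$, exploiting two structural facts. First, non-degeneracy of a point is constant along its orbit and, by the direct-product normal form of Theorem \ref{thm:2}(b), non-degeneracy of a rank-$r$ orbit forces non-degeneracy of every orbit in a full neighbourhood; since $\mathcal{L}$ is almost non-degenerate (finitely many orbits, rank dropping on boundaries), the closure of every orbit contains a compact minimal-rank orbit on which that orbit accumulates, so $\mathcal{L}$ is non-degenerate if and only if every compact orbit $\mathcal{O}\subseteq\mathcal{L}$ is non-degenerate. Second, by Remark \ref{rem:tilde:K} (via Lemma \ref{lem:4}), each $\mathbb{K}_i^{\mathcal{O}_0}$ is a symplectic Bott-critical submanifold of $J_i\circ F$, arising as the fixed-point set of a Hamiltonian $S^1$-action (after multiplication by $\sqrt{-1}$ in the hyperbolic case); hence each $\mathbb{K}_i$ carries a well-defined symplectic normal bundle on which the action is a rotation (elliptic/focus case) or its hyperbolic analogue. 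Under the connectedness assumption (i), $\mathbb{K}_i=\mathbb{K}_i^{\mathcal{O}_0}$ passes through every compact orbit, so near a fixed compact $\mathcal{O}$ the germs of the $\mathbb{K}_i$ are precisely the candidate critical submanifolds of a normal form at $\mathcal{O}$.

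For the equivalence ``non-degeneracy $\Leftrightarrow$ (ii)'' under (i), I would work at a fixed compact orbit $\mathcal{O}$ and a point $m_0\in\mathcal{O}$. In the direction non-degeneracy $\Rightarrow$ (ii), apply Theorem \ref{thm:2}(b) at $\mathcal{O}$: in the resulting Vey coordinates the germs of the $\mathbb{K}_i$ become the coordinate symplectic subspaces $\{x_j=y_j=0\}$ and their focus pairs, which are manifestly pairwise transversal with symplectic intersections, giving (ii); here one must check that the connectedness condition identifies the germ of $\mathbb{K}_i$ at $\mathcal{O}$ with the correct coordinate block, which follows since $\mathbb{K}_i$ is the full critical set of the corresponding $S^1$-generator and hence locally one of these fixed-point components. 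In the converse direction, I would assume (ii) and reconstruct the Cartan subalgebra: transversality of the $\mathbb{K}_i$ at $\mathcal{O}$ makes the symplectic normal space of $\mathcal{O}$ split as a direct sum of the pairwise-symplectic normal blocks of the individual $\mathbb{K}_i$, and the $S^1$-rotation (resp.\ hyperbolic) structure on each two- or four-dimensional block supplies one elliptic/hyperbolic/focus generator of simple spectrum; adjoining the $r$ regular directions yields $n$ commuting linearisations spanning an $n$-dimensional commutative subalgebra with a regular element, i.e.\ $m_0$ is non-degenerate. This reconstruction is where the symplecticity of the intersections in (ii) is essential, and it is the main obstacle.

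For (i, ii) $\Rightarrow$ (iii), I would use the non-degeneracy just established and compute the bifurcation diagram near $\mathcal{L}$. By Remark \ref{rem:a:b} all compact orbits share one Williamson type and, near each of them, the Vey normal form produces the standard local bifurcation diagram determined by that type; connectedness of each $\mathbb{K}_i$ guarantees that these standard pieces fit together without creating extra branches, so the critical-value set near $\mathcal{L}$ coincides with the one near a single compact orbit, which is the non-splitting condition. For Part 2, connectedness (i) trivially implies (iv). Conversely, under (iii) the fiber is non-degenerate and, near each compact orbit, the normal form shows that each $\mathbb{K}_i$ passes through that orbit; hence every compact orbit is automatically contained in each $\mathbb{K}_i$, so the remaining content of (i) is exactly that each $\mathbb{K}_i$ be connected, i.e.\ (iv).

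For Part 3, under hypothesis (v) I would compute $\mathbb{K}_i$ directly in the model $(V_1\times\dots\times V_{n-k_f})/\Gamma$. Before passing to the quotient, $\mathbb{K}_i$ equals $S_i\times\prod_{j\ne i}V_j$, where $S_i\subseteq V_i$ is the finite set of singular points of the $i$-th (hyperbolic or focus) factor; since each $V_j$ is connected, the connected components of $\mathbb{K}_i$ are indexed by $S_i$. As $\Gamma$ acts freely and component-wise, the connected components of $\mathbb{K}_i$ in the quotient are indexed by the $\Gamma$-orbits on $S_i$, so $\mathbb{K}_i$ is connected if and only if $\Gamma$ acts transitively on $S_i$, which is exactly (vi). The containment of all compact orbits in $\mathbb{K}_i$ is automatic, since a compact orbit of $\mathcal{L}$ is the image of a product of the central rank-$0$ orbits of the factors and the central orbit of the $i$-th hyperbolic/focus factor lies in $S_i$. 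Assembling the three parts completes the proof, the delicate point throughout being the normal-form reconstruction of non-degeneracy from (ii) carried out in the second paragraph.
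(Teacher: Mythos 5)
Your overall architecture matches the paper's reduction (the easy implications (i,iii)$\Rightarrow$(ii,iv), (iii,iv)$\Rightarrow$(i) and (i,v)$\Leftrightarrow$(v,vi) are handled essentially as the paper does), but for the core equivalence you take a genuinely different, more elementary route. Where the paper (Lemma \ref{lem:4}(c)) propagates the full Vey normal form from $\mathcal{O}_0$ to every orbit using the linearization theorem for analytic torus actions of \cite{kud:toric} and a normalization of an integer matrix $\|k_{ij}\|$, you work at the linearized level: the commuting, semisimple generators of the $S^1$-actions whose fixed-point sets are the $\mathbb{K}_i$, together with pairwise transversality and symplecticity of intersections, force a block splitting of the reduced tangent space, and a generic combination of the block operators has simple spectrum --- a direct reconstruction of the Cartan subalgebra. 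This is correct (it is commutativity plus semisimplicity that make \emph{pairwise} transversality suffice for the direct-sum splitting), and your converse direction, identifying the germ of each $\mathbb{K}_i$ at a compact orbit with a coordinate block of the Vey model via its characterization as an $S^1$-fixed-point set, is also sound. Be aware, though, that both directions lean on Remark \ref{rem:tilde:K}, i.e.\ on Lemma \ref{lem:4}(a,b): the extension of the (complexified) $S^1$-actions from a neighbourhood of $\mathcal{O}_0$ to a neighbourhood of the whole connected set $\mathbb{K}_i$ by analytic continuation. That extension is where real-analyticity, compactness and the connectedness hypothesis actually do their work, and a self-contained proof would have to supply it.

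The genuine gap is the implication (i,ii)$\Rightarrow$(iii). Non-splitting is a statement about the whole bifurcation diagram of $F|_{U(\mathcal{L})}$, so one must control \emph{every} critical point in a neighbourhood of $\mathcal{L}$ --- including points on and near the non-compact singular orbits of $\mathcal{L}$, which your compact-orbit analysis never touches --- and show that all their critical values already occur near a single compact orbit. Your single sentence (``fit together without creating extra branches'') names the right mechanism for half of this: since $\mathbb{K}_i$ is connected and $\mathrm{d}(J_i\circ F)$ vanishes on it, $J_i\circ F\equiv 0$ on $\mathbb{K}_i$, so type-$i$ critical values cannot split into parallel sheets. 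But the other half --- that the critical set of $F|_{U(\mathcal{L})}$ is exhausted by $\bigcup_i\mathbb{K}_i$ --- is not proved; this is exactly what the paper establishes in Lemma \ref{lem:4}(c), Case 2, by showing that $J\circ F$ serves as a Vey momentum map at the non-compact orbits as well (alternatively one could argue by analytic continuation of the identity $\mathrm{d}(J_i\circ F)=0$ along the connected critical branches, but some such argument must be given). In addition, your citation of Remark \ref{rem:a:b} at that point is circular, since that remark presupposes the non-splitting condition being proved; the fact you need there (all compact orbits share rank and Williamson type) fortunately follows from your own Cartan-subalgebra argument, so this is repairable. Your Part 3 computation in the almost-direct-product model is fine, modulo the identification of the $\mathbb{K}_i$ with the model's singular sets under a merely topological equivalence --- a subtlety the paper also leaves implicit.
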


\begin{Corollary} [Criteria for connectedness condition and non-degeneracy] \label {cor:stab:help}
For any compact, almost non-degenerate singular fiber $\mathcal{L}$, the following conditions are equivalent:
\begin{itemize}
\item $\mathcal{L}$ is non-degenerate and satisfies the connectedness condition, 
\item {\rm(i)} and {\rm(ii)};
\item {\rm(iii)} and {\rm(iv)};
\item {\rm(v)} and {\rm(vi)},
\end{itemize}
where {\rm(i)}--{\rm(vi)} are the conditions from Theorem  {\rm\ref {thm:stab:help}}.
\end{Corollary}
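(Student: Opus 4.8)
The plan is to prove Corollary \ref{cor:stab:help} as a direct logical consequence of Theorem \ref{thm:stab:help}, organizing the four listed conditions into a cycle of implications. Since Theorem \ref{thm:stab:help} supplies all the individual equivalences and implications between the conditions (i)--(vi), the corollary is essentially a bookkeeping assembly: I would show that each of the four bullet points implies the next (cyclically), so that all four are mutually equivalent. Throughout, $\mathcal{L}$ is assumed compact and almost non-degenerate, which is the standing hypothesis of Theorem \ref{thm:stab:help}, so all of its conclusions are available.

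First I would unwind the definitions to see that the first bullet, ``$\mathcal{L}$ is non-degenerate and satisfies the connectedness condition,'' is by construction the same as asserting non-degeneracy together with (i). Theorem \ref{thm:stab:help} tells us that \emph{under} (i), non-degeneracy of $\mathcal{L}$ is equivalent to (ii); hence the first bullet is equivalent to the conjunction (i) and (ii), giving the second bullet. This handles the equivalence of bullets one and two in a single step, using only the first assertion of the theorem.

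Next I would pass from (i, ii) to (iii, iv). The theorem states that (i, ii) implies (iii). Once (iii) is in hand, the theorem's third assertion says that, assuming the non-splitting condition (iii), the connectedness condition (i) is equivalent to (iv); since we already have (i), we obtain (iv), and therefore (iii) and (iv) both hold. Conversely, to close the loop back toward non-degeneracy I would note that (iii) includes non-degeneracy of $\mathcal{L}$ by definition (Definition \ref{def:non:split}), and that (iv) together with (iii) yields (i) via the same equivalence; thus (iii, iv) returns us to ``non-degenerate and satisfies the connectedness condition,'' i.e.\ the first bullet. This shows the first three bullets are mutually equivalent.

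Finally I would incorporate (v, vi). Condition (v) explicitly includes non-degeneracy of $\mathcal{L}$, and the last assertion of Theorem \ref{thm:stab:help} states that, for singularities of almost-direct-product type (v), the connectedness condition (i) is equivalent to (vi). Hence (v) and (vi) together give non-degeneracy and (i), i.e.\ the first bullet. For the reverse direction, I would argue that the first bullet implies (v): having established that the first bullet is equivalent to (i, ii, iii, iv), the almost-direct-product structure (v) follows from the non-splitting condition together with Zung's semilocal classification of non-degenerate singularities \cite{zung96a}, which realizes any such $\mathcal{L}$ as a quotient of a direct product by a free component-wise finite group action; then (vi) follows from (i) by the same equivalence. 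The main obstacle in making this rigorous is the last step: verifying that non-degeneracy plus the non-splitting condition genuinely force the almost-direct-product form (v), rather than merely a topologically almost-direct form, and that the component-wise freeness and transitivity statements match up exactly. This is precisely the content carried by the theorem's final clause, so I would lean on it directly rather than reprove the classification, and simply assemble the implications $\text{bullet }1 \Leftrightarrow \text{(i, ii)} \Leftrightarrow \text{(iii, iv)} \Leftrightarrow \text{(v, vi)}$ into the stated four-way equivalence.
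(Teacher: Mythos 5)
Your proposal is correct and takes essentially the same route as the paper's proof: Theorem \ref{thm:stab:help} gives the mutual equivalence of the first three bullets and the implication from (v, vi) back to them, and the single remaining implication (iii) $\Longrightarrow$ (v) is supplied by Zung's topological classification (\cite[Theorem 7.3]{zung96a}), exactly as you invoke it. Your closing worry about obtaining only a ``topologically'' almost-direct form is moot, since condition (v) is itself stated as a topological equivalence, so Zung's theorem delivers it verbatim.
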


In Theorem \ref {thm:stab:help} and in what follows, 
the simultaneous fulfillment of the conditions (i) and (ii) is denoted by (i, ii), and similarly for (iii, iv) etc. We note that, for saddle-saddle singularities, (iii, iv) simply means that both components of the $l$-type \cite{bol91, mat96, BF} of the singularity are connected. 
Also note that (iii) in (iii, iv) is essential, see an example in \cite[Comment to Fig.~9.60]{BF}.
In fact, (v, vi) appears in \cite{osh-tuz} for saddle-saddle singularities. The equivalence of (iii, iv) and (v, vi) for rank-$0$ focus singularities is proved in \cite[Proposition 3.4]{izos:diss}.

\begin{Remark} \label {rem:proof:thm:stab:help}
In Theorem \ref {thm:stab:help}, the following implications are easy: (i, iii)$\Longrightarrow$(ii, iv), (iii, iv)$\Longrightarrow$(i) and (i, v)$\Longleftrightarrow$(v, vi). 
Thus, for proving Theorem \ref {thm:stab:help}, it is enough to prove the implications (i, ii)$\Longrightarrow$(iii) and (i, non-degeneracy of $\mathcal{L}$)$\Longrightarrow$(ii). 
\end{Remark}

Proofs of Theorem \ref {thm:stab:help} and Corollary \ref {cor:stab:help} are given in App.~\ref{sec:proof:}.

By {\em complexity} of a compact fiber $\mathcal{L}$, we will mean the number of compact orbits contained in this fiber (actually, by Remark \ref {rem:a:b}, compact orbits in $\mathcal{L}$ coincide with orbits of the minimal rank, provided that the non-splitting condition holds). 

\begin{Ex} \label {ex:complexity}
A topological classification of semilocal non-degenerate singularities satisfying the non-splitting condition and having complexity $\le c$ is known for the following Williamson types $(k_e,k_h,k_f)$ (cf.\ Remark \ref {rem:a:b}):
\begin{itemize}
\item rank-$0$ {\em saddle} type, $k_e=k_f=0$, 
with $(k_h,c)=(2,2)$ \cite{ler:uma2, bol91, mat96} (43 topological types of saddle-saddle singularities, where ${\mathcal B}\times {\mathcal B}$ and $({\mathcal B}\times {\mathcal C}_2)/\mathbb{Z}_2$ appear in the Kovalevskaya top, $({\mathcal B}\times {\mathcal D}_1)/\mathbb{Z}_2$ appears in the Goryachev-Chaplygin-Sretenskii case, $({\mathcal C}_2\times {\mathcal C}_2)/\mathbb{Z}_2$ occurs in the Clebsch case), 
with $(k_h,c)=(3,1)$ \cite{kal:thesis}, \cite[Theorem 9.13]{BF} (32 topological types of saddle-saddle-saddle singularities), and 
with any $(k_h,c)$ \cite{osh2010, osh2011}; 
\item rank-$r$ {\em center-focus} type, $k_h=0$, with any $(r,k_e,k_f,c)$ \cite{izo11} (e.g.\ ${\mathcal F}_1$ appears in the Lagrange case, ${\mathcal F}_2$ occurs in the Clebsch case); 
\item rank-$0$ {\em saddle-focus} type, $k_e=0$, 
with $(k_h,k_f,c)=(1,1,1)$ \cite{ler00} and with $(k_h,k_f,c)=(1,1,3)$ \cite {koz:osh20} (11 topological types of saddle-focus singularities of complexity 2, and 21 topological types of saddle-focus singularities of complexity 3). 
\end{itemize}
For example, singularities of the following topological types satisfy the connectedness condition: ${\mathcal A},$ ${\mathcal B},$ ${\mathcal A}^*,$ ${\mathcal F}_1,$ four saddle-saddle \cite{ler:uma2} and two saddle-focus \cite{ler00} singularities of complexity 1 
\begin{equation} \label {eq:exa:conn} 
{\mathcal B}\times {\mathcal B}, \ 
({\mathcal B}\times {\mathcal C}_2)/\mathbb{Z}_2, \ 
({\mathcal B}\times {\mathcal D}_1)/\mathbb{Z}_2, \ 
({\mathcal C}_2 \times {\mathcal C}_2) / (\mathbb{Z}_2 + \mathbb{Z}_2),\
{\mathcal B}\times{\mathcal F}_1,\ 
({\mathcal B}\times{\mathcal F}_2)/\mathbb{Z}_2,
\end{equation}
eleven saddle-saddle singularities of complexity 2 \cite{osh-tuz}
\begin{equation} \label {eq:exa:conn:} 
({\mathcal D}_1\times {\mathcal D}_1)/\mathbb{Z}_2,\quad
({\mathcal P}_4\times {\mathcal P}_4)/D_4,\quad
({\mathcal C}_2\times {\mathcal C}_2)/\mathbb{Z}_2,\quad
({\mathcal C}_1\times {\mathcal I}_1)/\mathbb{Z}_4,\quad
({\mathcal K}_3\times {\mathcal K}_3)/(\mathbb{Z}_4 + \mathbb{Z}_2),
\end{equation}
\begin{equation*}
({\mathcal C}_1\times {\mathcal J}_1)/\mathbb{Z}_4,\quad
({\mathcal C}_1\times {\mathcal K}_3)/\mathbb{Z}_4,\quad
({\mathcal C}_1\times {\mathcal P}_4)/\mathbb{Z}_4,\quad
({\mathcal D}_1\times {\mathcal C}_2)/\mathbb{Z}_2,\quad
({\mathcal C}_2\times {\mathcal P}_4)/(\mathbb{Z}_2 + \mathbb{Z}_2)
\end{equation*}
(where the latter case corresponds to two different types of singularities),
three saddle-focus \cite{koz:osh20} and one 4 d.f.\ focus (cf.\ \cite[Proposition 3.5]{izos:diss}, \cite[Sec.~8]{izo11}) singularities of complexity 2 
\begin{equation} \label {eq:exa:conn::}
({\mathcal D}_1\times {\mathcal F}_2)/\mathbb{Z}_2,\quad
({\mathcal C}_1\times {\mathcal F}_4)/\mathbb{Z}_4,\quad
({\mathcal C}_2\times {\mathcal F}_2)/\mathbb{Z}_2, \quad (\mathcal F_2\times\mathcal F_2)/\mathbb Z_2, 
\end{equation}
and their almost-direct products with each other and with a regular component ${\mathcal W}_{reg} = D^1\times S^1$ (we note that an almost-direct product of singularities satisfying the connectedness condition, obviously, also satisfies it).
Here ${\mathcal A}^*:=({\mathcal B}\times {\mathcal W}_{reg})/\mathbb{Z}_2$. 
By abusing notations, we will preserve the notation ${\mathcal V}$ for a singularity of the topological type ${\mathcal V}\times {\mathcal W}_{reg}$.
Some elementary semilocal singularities are shown in Fig.~\ref {fig:atoms}.
\end{Ex}

\begin{figure}[htbp]
\begin{center}
\includegraphics[width=0.04\textwidth]{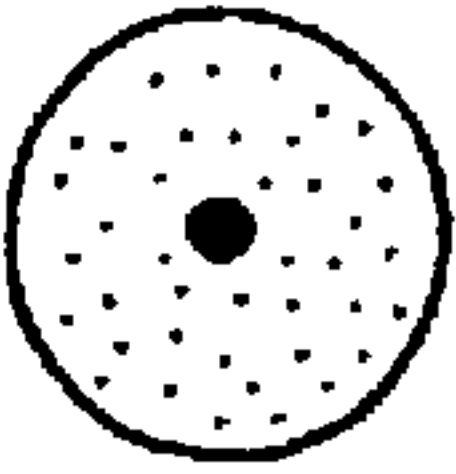}
\includegraphics[width=0.05\textwidth]{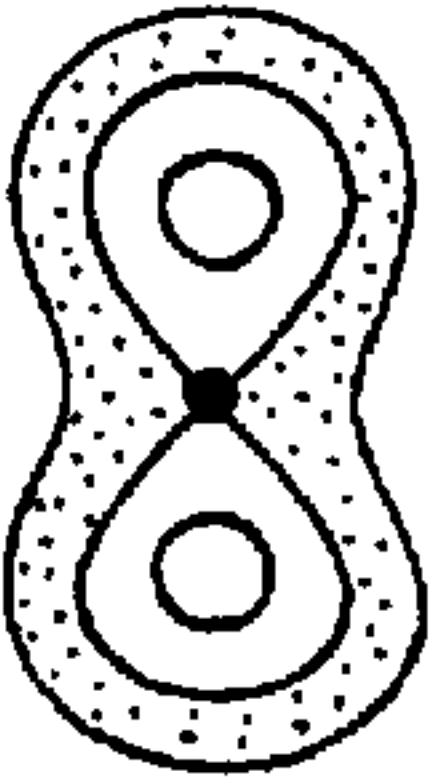}
\includegraphics[width=0.09\textwidth]{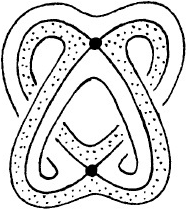}
\includegraphics[width=0.07\textwidth]{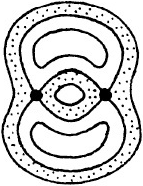}
\includegraphics[width=0.06\textwidth]{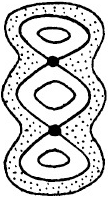}
\includegraphics[width=0.11\textwidth]{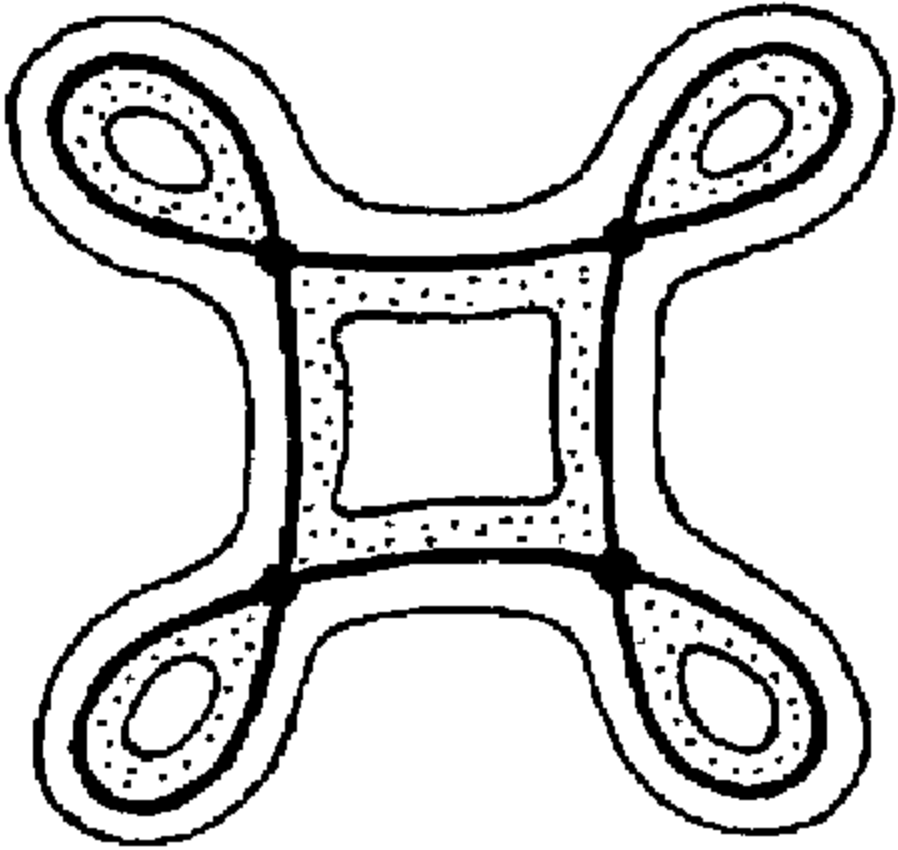}
\includegraphics[width=0.11\textwidth]{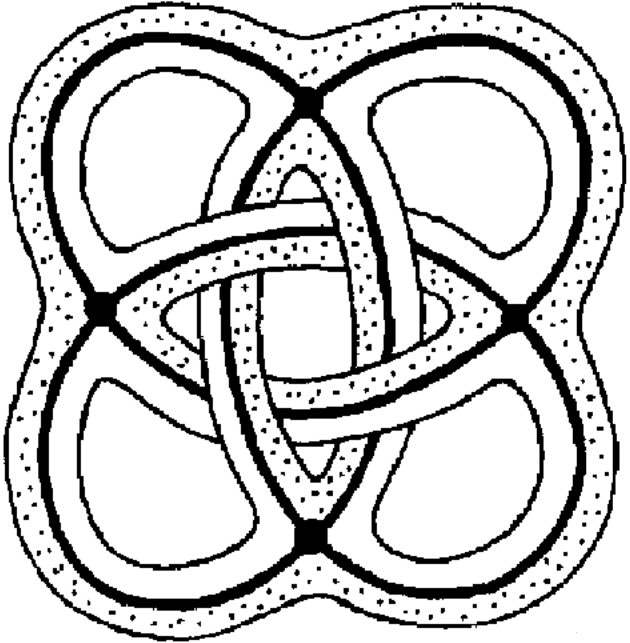}
\includegraphics[width=0.12\textwidth]{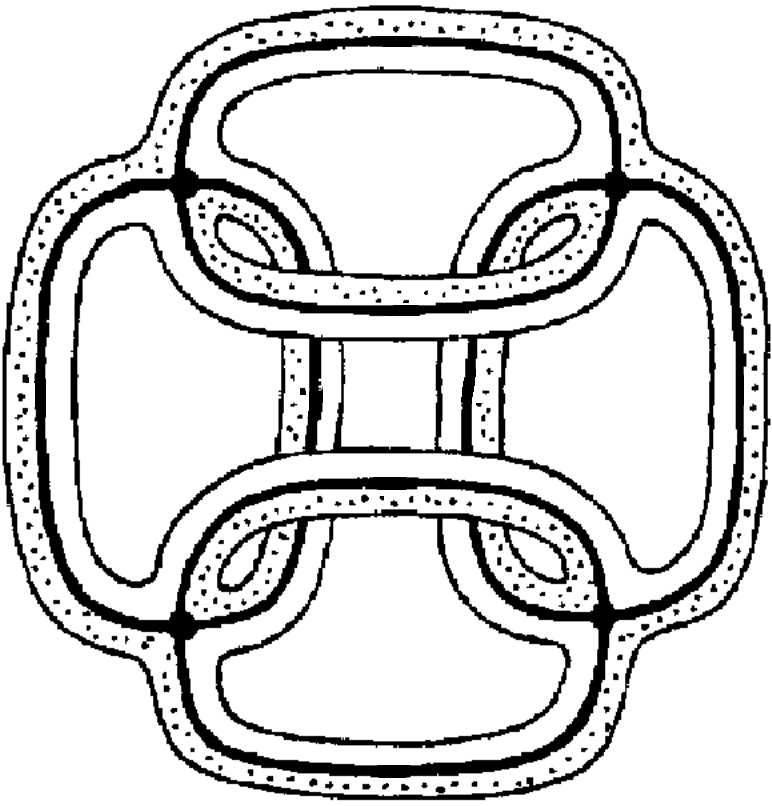}
\includegraphics[width=0.12\textwidth]{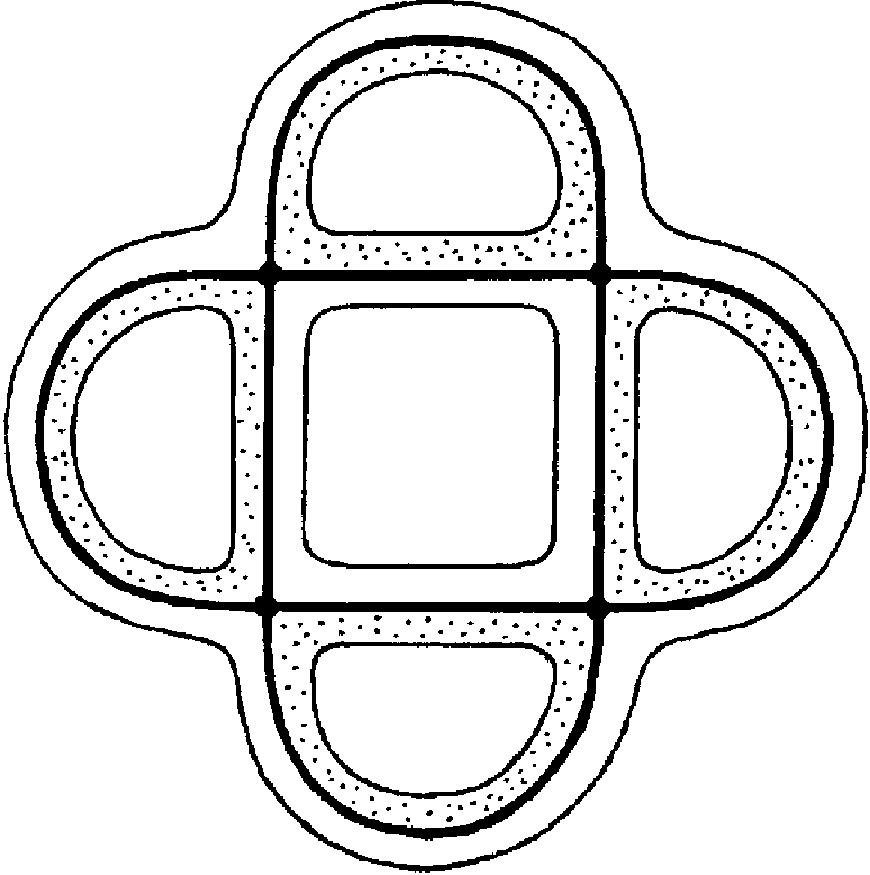}
\includegraphics[width=0.15\textwidth]{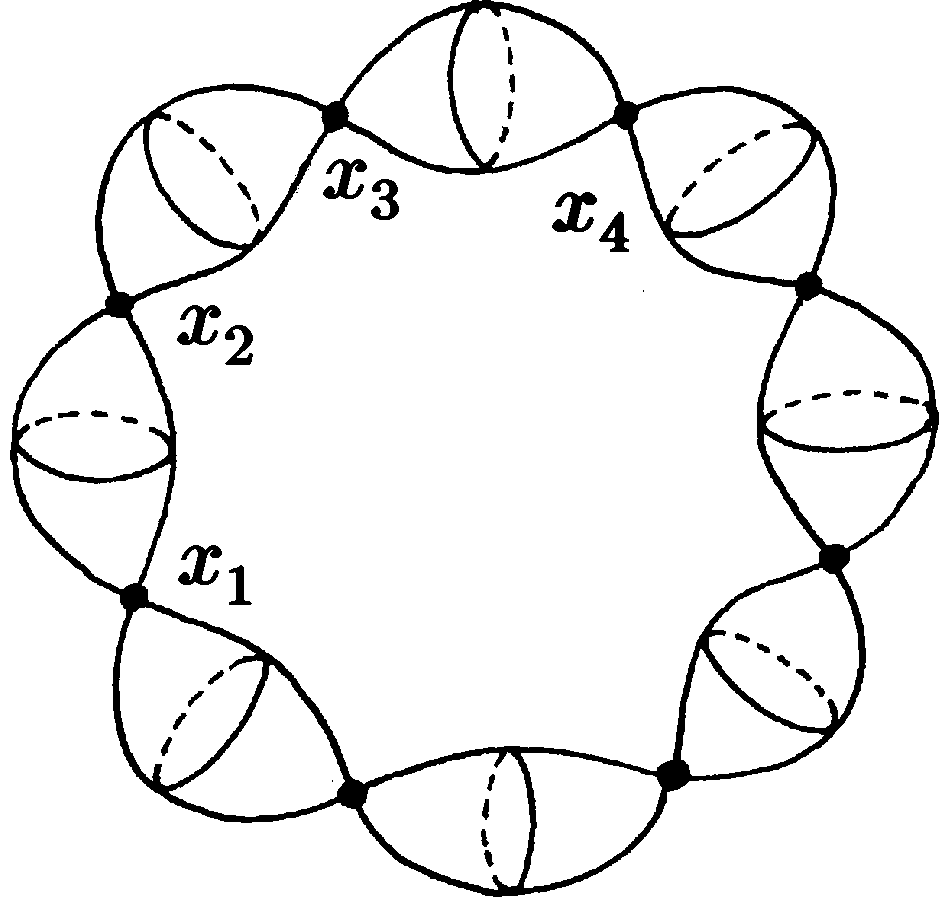}
\end{center}
${\mathcal A}\quad\ \ {\mathcal B}\qquad\ \ {\mathcal C}_1\qquad\ \ {\mathcal C}_2\qquad\ {\mathcal D}_1 \qquad \quad {\mathcal I}_1 \qquad\qquad {\mathcal J}_1 \qquad\ \ \qquad {\mathcal K}_3 \qquad \qquad \ {\mathcal P}_4 \qquad\qquad\quad\ {\mathcal F}_k \phantom{oooo}$
\caption {Elementary semilocal singularities: elliptic, some hyperbolic and focus-focus singularities.} \label {fig:atoms}
\end{figure}

\subsection{Main result}

\begin{Theorem} [Semilocal structural stability test] \label {thm:stab}
Suppose $\mathcal{L}$ is a compact non-degenerate singular fiber satisfying the connectedness condition (cf.\ Definition \ref {def:conn} and Corollary \ref {cor:stab:help}). Then the semilocal singularity at the fiber $\mathcal{L}$ is structurally stable in a strong sense (Definition \ref {def:stab}) under real-analytic integrable perturbations (but not necessarily under $C^\infty$ integrable perturbations). 
\end{Theorem}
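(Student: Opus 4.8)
The plan is to reduce the semilocal statement to the already-known local (orbit-level) structural stability and then to glue the local pictures together, using the connectedness condition both to control the gluing and to forbid the fibre from splitting. First I would invoke Corollary \ref{cor:stab:help} to rephrase the hypothesis ``$\mathcal{L}$ non-degenerate $+$ connectedness condition'' as the equivalent package (ii) together with (iii, iv). The non-splitting condition (iii) then lets me assume, after replacing $F$ by a Vey momentum map $J\circ F$ at a fixed compact orbit $\mathcal{O}_0\subset\mathcal{L}$, that the bifurcation diagram of $F|_{U(\mathcal{L})}$ is in standard product form and that, by Remark \ref{rem:tilde:K}, each $\mathbb{K}_i=\mathbb{K}_i^{\mathcal{O}_0}$ is a single connected symplectic Bott critical submanifold, with the full singular set of $F|_{U(\mathcal{L})}$ exhausted by $\bigcup_i\mathbb{K}_i$.

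The local input is the symplectic structural stability in the strong sense of each non-degenerate compact singular orbit in the real-analytic category (Theorem \ref{thm:stab:}, \cite[Example 4.2 (A)]{kud:toric}), which rests on the hidden Hamiltonian torus action: on the complexification $U_0^{\mathbb{C}}$ the $n-r$ singular components of $J\circ F$, after multiplying the hyperbolic and one focus-focus component of each pair by $\sqrt{-1}$, generate an $(S^1)^{n-r}$-action whose fixed-point components are the $\mathbb{K}_i$ (cf.\ the remark after Definition \ref{def:Vey:momentum} and Remark \ref{rem:tilde:K}). For a sufficiently small real-analytic perturbation $(\tilde\omega,\tilde F)$, the transversal non-degeneracy of $J_i\circ F$ along $\mathbb{K}_i$ is a Bott (hence open) condition, so by the implicit function theorem the corresponding critical manifold persists as a nearby connected symplectic submanifold $\tilde{\mathbb{K}}_i$; connectedness is thereby preserved, the non-splitting condition (iii) continues to hold for $\tilde F$, and $\tilde{\mathcal{L}}$ is a single fibre of the same Williamson type with the same closure relations among its orbits and the same twisting group $\Gamma$. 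The analytic toric rigidity of \cite{kud:toric} moreover shows that the hidden action itself persists up to real-analytic symplectic conjugacy, which is the mechanism behind the orbit-level strong equivalence.

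With the combinatorial type thus shown to be preserved, I would assemble the orbit-level strong equivalences into a semilocal one. Over each singular orbit $\tilde{\mathcal{O}}$ of $\tilde F$ close to an orbit $\mathcal{O}\subseteq\mathcal{L}$, the orbit-level result supplies a real-analytic symplectomorphism and a base diffeomorphism germ conjugating $\tilde F$ to $F$; connectedness guarantees that $J$ is globally well-defined on $U(\mathcal{L})$ up to $\Gamma$, so these conjugacies can be chosen with a single consistent base diffeomorphism and propagated along the connected sets $\tilde{\mathbb{K}}_i$, and over the regular part they agree with the Liouville--Arnold charts. Patching them (shrinking to a neighbourhood $\tilde U\supseteq U_1$, the nested neighbourhoods of Definition \ref{def:stab} absorbing boundary effects) yields a single homeomorphism realizing strong equivalence. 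Here one must in general relax from symplectomorphism to homeomorphism, since the symplectic moduli of the hyperbolic and focus-focus factors obstruct a semilocal symplectic gluing; this is why the conclusion is strong, but not symplectically strong, equivalence. Under $C^\infty$ perturbations the argument fails because in the smooth category Theorem \ref{thm:2} yields only fibration equivalence, not conjugacy of momentum maps, so a splittable singularity of the same type may be split.

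The step I expect to be the main obstacle is precisely this passage from local to semilocal: proving that the locally defined conjugacies can be made to agree on overlaps and extend to one global homeomorphism, equivalently that no new singular orbits appear and the perturbed singular set is exhausted by the connected submanifolds $\tilde{\mathbb{K}}_i$. This is exactly the point where connectedness and the analytic rigidity of the hidden torus action are indispensable and where the smooth argument breaks down.
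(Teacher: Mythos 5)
Your skeleton (orbit-level stability via Theorem \ref{thm:stab:} and the hidden torus action, then propagation along the connected critical sets, then assembly) matches the paper's strategy, and you correctly locate where the difficulty sits. But the mechanism you propose for the crucial propagation step does not work, and this is a genuine gap. You argue that Bott non-degeneracy of $J_i\circ F$ along $\mathbb{K}_i$ is an open condition, so by the implicit function theorem the critical manifold persists as a nearby connected submanifold $\tilde{\mathbb{K}}_i$, and you conclude that ``the non-splitting condition (iii) continues to hold for $\tilde F$ and $\tilde{\mathcal{L}}$ is a single fibre of the same Williamson type.'' That inference is a non sequitur, and moreover it is a purely smooth-category argument: persistence of the critical \emph{set} says nothing about the critical \emph{values}. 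After a perturbation, the finitely many compact orbits sitting inside the (still connected) set $\tilde{\mathbb{K}}_i$ may acquire distinct $\tilde F$-values, so the fibre splits --- this is exactly what happens to $(\mathcal{B}\times\mathcal{C}_2)/\mathbb{Z}_2$ (which \emph{does} satisfy the connectedness condition) under $C^\infty$ integrable perturbations, cf.\ Example \ref{exa:koval} and Fig.~\ref{fig:C2:perturb}. So your argument, as written, would equally ``prove'' the false $C^\infty$ statement; the theorem's dichotomy between the analytic and smooth categories is precisely the evidence that IFT-type persistence cannot be the engine of the proof.

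What the paper actually uses (Principle Lemma \ref{lem:3}, proved via Lemma \ref{lem:4}) is an analytic-continuation argument that is absent from your proposal: the perturbed Vey functions $i\tilde J_j^{\mathbb{C}}\circ\tilde F^{\mathbb{C}}$ generate Hamiltonian $S^1$-actions near $\mathcal{O}_0$ by the orbit-level normal form, and since the time-$2\pi$ flow map is the identity on a neighbourhood of $\mathcal{O}_0$ and the critical sets $K_j$ are \emph{connected}, uniqueness of analytic continuation forces the time-$2\pi$ map to be the identity on a whole neighbourhood of $K_j$. Hence the \emph{same} perturbed Vey momentum map $\tilde J\circ\tilde F$ serves as a normal form at every compact orbit of $\mathcal{L}$; in particular all perturbed singular orbits lie on the common level $\{\tilde J_i\circ\tilde F=0,\ i>r\}$, which is what rules out splitting and shows that rank, Williamson type and local bifurcation diagrams are preserved. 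This is where real-analyticity and connectedness genuinely interact, not merely in the rigidity of the torus action at the single orbit $\mathcal{O}_0$. The final assembly also differs from yours: rather than patching local conjugacies over orbits (whose mutual consistency on overlaps you leave unexplained), the paper deduces from the matching data that $\mathcal{L}$ and $\tilde{\mathcal{L}}$ have isomorphic $Cl$-types and then runs the arguments of Zung's classification \cite[Theorem 7.3]{zung96a} to convert this combinatorial coincidence into equivalence in the strong sense (with the Principle Lemma offering an alternative route, as noted in the paper's footnote).
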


Thus, by Theorem \ref {thm:stab:help} (v, vi) and Theorem \ref {thm:stab}, all singularities \eqref {eq:exa:conn}--\eqref{eq:exa:conn::}, as well as their almost-direct products with each other and with regular components, are structurally stable in a strong sense under integrable real-analytic perturbations. 

This theorem immediately implies the following.

\begin{Corollary} [Structural stability of simple singularities] \label {cor:stab}
Suppose $\mathcal{L}$ is a compact non-degenerate singular fiber containing a unique compact orbit $\mathcal{O}_0$.
Then the semilocal singularity at the fiber $\mathcal{L}$ is structurally stable in a strong sense under real-analytic integrable perturbations (but not necessarily under $C^\infty$ integrable perturbations, see Example \ref {exa:koval} for the saddle-saddle singularity $(\mathcal B\times\mathcal C_2)/\mathbb Z_2$). 
\end{Corollary}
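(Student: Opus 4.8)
The plan is to derive the corollary directly from Theorem \ref{thm:stab} by verifying that a compact non-degenerate fiber $\mathcal{L}$ possessing a unique compact orbit $\mathcal{O}_0$ automatically satisfies the connectedness condition of Definition \ref{def:conn}. The first step is to check that $\mathcal{O}_0$ is eligible as the reference orbit there, i.e.\ that it has the minimal rank $r$ of $\mathcal{L}$. Since $\mathcal{L}$ is non-degenerate it is almost non-degenerate, so the boundary of every orbit consists of orbits of strictly smaller rank; hence an orbit of minimal rank $r$ has empty boundary, is closed in the compact fiber $\mathcal{L}$, and is therefore a compact torus. Thus every minimal-rank orbit is compact, and as $\mathcal{O}_0$ is the only compact orbit, it is the unique minimal-rank orbit. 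In particular $\mathcal{O}_0$ is a non-degenerate rank-$r$ orbit, and I would take the Vey momentum map $J\circ F$ at $\mathcal{O}_0$ (Definition \ref{def:Vey:momentum}, Theorem \ref{thm:2}(b)) to define the sets $\mathbb{K}_i$.

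It then remains to check the two requirements of Definition \ref{def:conn} for each hyperbolic or focus-focus index $i$: that $\mathbb{K}_i$ contains every compact orbit of $\mathcal{L}$ and that $\mathbb{K}_i$ is connected. For the first, the normal form \eqref{eq:eli} gives $\mathrm{d}h_i|_{\mathcal{O}_0}=0$, so $\mathcal{O}_0\subset\mathbb{K}_i$; since $\mathcal{O}_0$ is the only compact orbit, the containment requirement is satisfied. For the second, I would use the practical criterion recorded in Definition \ref{def:conn}, namely that connectedness of $\mathbb{K}_i$ follows once each compact orbit can be joined to $\mathcal{O}_0$ by a path inside $\mathbb{K}_i$; with $\mathcal{O}_0$ the sole compact orbit this is witnessed by the constant path, so $\mathbb{K}_i$ is connected.

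Having verified the connectedness condition, Theorem \ref{thm:stab} applies and delivers structural stability in the strong sense under real-analytic integrable perturbations; the failure under $C^\infty$ perturbations for $(\mathcal{B}\times\mathcal{C}_2)/\mathbb{Z}_2$ is the content of Example \ref{exa:koval} and is not part of this deduction. The only step with genuine content is the identification of $\mathcal{O}_0$ as a minimal-rank orbit, i.e.\ the fact that minimal-rank orbits of a compact non-degenerate fiber are automatically compact; once this is in hand, the remainder is a direct substitution of the hypothesis ``unique compact orbit'' into the connectedness condition, which is why the corollary follows immediately from the theorem.
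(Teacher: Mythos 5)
Your proposal is correct and follows exactly the route the paper intends: the paper gives no separate proof (it states that Theorem \ref{thm:stab} ``immediately implies'' the corollary), the implicit argument being precisely that with a unique compact orbit the connectedness condition of Definition \ref{def:conn} holds trivially --- containment of all compact orbits in each $\mathbb{K}_i$ reduces to $\mathcal{O}_0\subset\mathbb{K}_i$, and connectedness follows from the practical path criterion with the constant path. Your preliminary observation that the minimal-rank orbit is automatically compact (empty boundary by almost non-degeneracy), hence equal to $\mathcal{O}_0$, is the same remark the paper makes at the start of Section \ref{sec:proof}, so nothing in your deduction departs from the paper's reasoning.
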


A proof of Theorem \ref {thm:stab} is given in Sec.~\ref{sec:proof}. Theorem \ref {thm:stab} for rank-$0$ focus singularities satisfying connectedness condition (iii, iv) (or (v, vi)) of Theorem \ref {thm:stab:help} was proved in \cite[Proposition 3.6]{izos:diss}.

\section {Symplectic structural stability of local singularities} \label {sec:rank0}

\begin{Theorem} [Local symplectic structural stability] \label {thm:stab:}
Suppose $(M,\omega,F)$ is a real-analytic integrable system and we have one of the following situations:
\begin{itemize} 
\item[\rm(a)] $K$ is a non-degenerate singular point of the system,
\item[\rm(b)] $K$ is a compact subset of a non-degenerate singular orbit $\mathcal{O}$, and the flows of the vector fields $X_{f_1},\dots,X_{f_n}$ are complete on $\mathcal{O}$.
\end{itemize}
Then the singularity at $K$ (local singularity) is symplectically structurally stable in a strong sense (Definition \ref {def:stab}) under real-analytic integrable perturbations (but not necessarily under $C^\infty$ integrable perturbations).
\end{Theorem}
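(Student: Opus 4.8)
The plan is to prove Theorem \ref{thm:stab:} (local symplectic structural stability) by reducing both the unperturbed and the perturbed systems to the \emph{same} canonical normal form via Theorem \ref{thm:2}, and then composing the resulting symplectomorphisms. The key observation is that Theorem \ref{thm:2} is not merely an existence statement: in the real-analytic case it produces, for a non-degenerate singular point (case (a)) or compact subset of a complete non-degenerate orbit (case (b)), a real-analytic symplectomorphism $\Phi$ onto an explicit canonical model $(V/\Gamma,\omega_{can})$ together with a real-analytic diffeomorphism germ $J$ conjugating the momentum map to the canonical $(h_1,\dots,h_n)$ of \eqref{eq:eli}. Since non-degeneracy is an open condition (it is determined by the spectrum of the linearizations $A_j$ having maximal multiplicity, i.e.\ $|\operatorname{Spec} A|=2n$, which is stable under small $C^0$-perturbations of $(\omega^{\mathbb C},F^{\mathbb C})$), for $\varepsilon$ small enough the perturbed system $(\tilde\omega,\tilde F)$ still has a non-degenerate singular point near $K$ \emph{of the same Williamson type}. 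The Williamson type cannot jump under a small perturbation because the numbers $(k_e,k_h,k_f)$ are read off from the spectrum of $A$ (real eigenvalues giving hyperbolic, purely imaginary giving elliptic, complex quadruples giving focus-focus), and these spectral types are preserved for small enough perturbations since the eigenvalues are simple and move continuously without crossing.

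The main steps I would carry out are as follows. First, fix a neighbourhood $U_0$ of $K$ and its complexification $U_0^{\mathbb C}$, and show that for $\varepsilon$ small, $(\tilde\omega,\tilde F)$ has a non-degenerate singular point $\tilde m_0$ (resp.\ singular orbit $\tilde{\mathcal O}$) close to the original one; this uses the implicit function theorem applied to $\mathrm{d}\tilde F=0$ together with the openness of non-degeneracy. Second, apply Theorem \ref{thm:2}(a) (resp.\ (b)) to \emph{both} systems to obtain real-analytic symplectomorphisms $\Phi,\tilde\Phi$ and diffeomorphism germs $J,\tilde J$ conjugating each momentum map to the canonical form \eqref{eq:eli} with respect to the canonical symplectic form \eqref{eq:eli:}. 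Because both systems have the same Williamson type, the two canonical models coincide (the same $(k_e,k_h,k_f,r)$, and in case (b) one must check the twisting group $\Gamma$ is also preserved). Third, set $\Psi:=\tilde\Phi^{-1}\circ\Phi$; this is a real-analytic symplectomorphism between neighbourhoods, and $\tilde J\circ\tilde F\circ\Psi^{-1}=\tilde J\circ\tilde F\circ\Phi^{-1}\circ\tilde\Phi=(h_1,\dots,h_n)=J\circ F\circ\Phi^{-1}$ on the overlap, so that $(\tilde J\circ\tilde F)\circ\Psi=(J\circ F)$, giving symplectic equivalence in a strong sense with $J':=\tilde J^{-1}\circ J$ as the required diffeomorphism germ of $\mathbb{R}^n$. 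Fourth, handle the neighbourhood bookkeeping of Definition \ref{def:stab}: one chooses $U,\tilde U\subseteq U_0$ containing $U_1$ on which $\Psi$ is a diffeomorphism onto its image, which is possible because $\tilde\Phi,\Phi$ are defined and analytic on a uniform neighbourhood once $\varepsilon$ is small (the size of the Vey chart is controlled below by $\varepsilon$ via the analytic-dependence aspect of the normal form).

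The hard part will be case (b): ensuring that the normal-form data of Theorem \ref{thm:2}(b)---in particular the twisting group $\Gamma$ and the splitting of the orbit as $\mathbb{R}^{r_o}\times(S^1)^{r_c}$---are \emph{preserved} under perturbation, and that the domains of $\Phi$ and $\tilde\Phi$ can be taken to contain a common neighbourhood of $K$ (or $U_1$) \emph{uniformly} in the perturbation. The twisting group is a discrete invariant: it is determined by the monodromy/holonomy of the orbit together with the $\pm1$-action on the hyperbolic disks, hence it is locally constant in the perturbation parameter and so unchanged for small $\varepsilon$; I would make this precise by noting $\Gamma$ acts on the (topologically stable) local model and that a small perturbation induces an equivariant identification. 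The uniform-domain issue is where one genuinely needs the \emph{quantitative} real-analytic content of the normal-form theorem: the radius on which the Vey/Eliasson-type symplectomorphism converges and is a diffeomorphism must be bounded below uniformly as $\tilde F\to F$ in the $C^0$-complexified norm. This is exactly the reason the definition of structural stability (Definition \ref{def:stab}) is phrased with the auxiliary neighbourhoods $U_1\subset U,\tilde U\subset U_0$ and the complexification $U_0^{\mathbb C}$, and I expect the proof to invoke the analytic dependence of the normal form on the system (as in the cited works \cite{vey, zung:mir04, kud:toric}) to extract the uniform lower bound on the domain. Everything else is a routine composition of conjugacies once these two discrete-and-uniform points are secured.
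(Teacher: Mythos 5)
Your overall architecture is the same as the paper's: bring both the unperturbed and the perturbed system to the identical canonical form \eqref{eq:eli}, \eqref{eq:eli:} and compose the two normalizations, so that $\tilde F=(\tilde J^{-1}\circ J)\circ F\circ(\tilde\Phi^{-1}\circ\Phi)^{-1}$ gives strong symplectic equivalence. However, there is a genuine gap at precisely the point you flag as ``the hard part'' and then defer: the uniform lower bound on the domain of the perturbed normalization. Theorem \ref{thm:2} is a purely qualitative existence statement for a \emph{fixed} system; applying it to the perturbed system $(\tilde\omega,\tilde F)$ produces a Vey chart on \emph{some} neighbourhood of the perturbed singular point, with no control whatsoever on its size. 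Definition \ref{def:stab} demands an equivalence between neighbourhoods $U,\tilde U$ both containing a prescribed $U_1$ with $\overline{U_1}\subset U_0$, so without a quantitative persistence statement your composed map $\tilde\Phi^{-1}\circ\Phi$ may only be defined on an uncontrollably small set and the argument does not close. Saying that you ``expect the proof to invoke the analytic dependence of the normal form on the system'' is naming the missing ingredient, not supplying it: no such off-the-shelf statement exists in Theorem \ref{thm:2} or in \cite{vey, zung:mir04}, and this ingredient is exactly the content of the paper's Lemma \ref{lem:period}(b), whose proof is nontrivial (it goes through the persistence and rigidity of the hidden Hamiltonian $(S^1)^{n-r}$-action on a complexified neighbourhood, via the perturbative versions \cite[Theorems 2.2(b), 3.10(b)]{kud:toric} of the toric-symmetry theorems, together with a rescaling argument reducing the system to an $O(\varepsilon)$-perturbation of its linearization). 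In short, the decisive analytic content of the theorem is absent from your proposal.

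A secondary, related point: because you normalize the two systems \emph{independently}, you are forced to separately prove that the discrete data match --- the Williamson type in case (a), and additionally the twisting group $\Gamma$ and the splitting $\mathbb{R}^{r_o}\times(S^1)^{r_c}$ in case (b). Your spectral argument for the Williamson type is fine (simple eigenvalues of a Hamiltonian matrix cannot change type), but your treatment of $\Gamma$ (``a discrete invariant\dots locally constant in the perturbation parameter'') presupposes a structured identification of the perturbed orbit with the unperturbed one, which is again the persistence problem in disguise. The paper avoids all of this by perturbing the normalization itself rather than re-normalizing: Lemma \ref{lem:period}(b) and the proof of Theorem \ref{thm:stab:}(b) produce $\tilde\Phi$ mapping into the \emph{same} model $(V/\Gamma,\omega_{can})$ with the same $(h_1,\dots,h_n)$ by construction (working near a compact $(S^1)^{r_c}$-orbit $K_0$ and extending along the perturbed Hamiltonian $\mathbb{R}^{r_o}$-flow), so no matching of discrete invariants is ever needed. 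If you incorporate a statement and proof of the persistence/rigidity lemma, your argument becomes the paper's proof; without it, the proposal is an outline with its central step unproved.
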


In Theorem \ref {thm:stab:}, we do not assume that the orbit $\mathcal{O}\supseteq K$ is compact. 
If $\mathcal{O}$ is compact, we can assume that $K=\mathcal{O}$. 
If $\mathcal{O}$ is non-compact, we can assume, e.g., that $K$ is a torus $K_0$ defined in the proof below. In Theorem \ref {thm:stab:}, we do not consider the case of a non-compact $K$ (e.g.\ when $\mathcal{O}$ is a non-compact singular orbit and $K=\mathcal{O}$), because even the notion of structural stability in Definition \ref {def:stab} is given only for compact subsets $K$.

\begin{proof}
Our proof follows \cite[Example 4.2(A)]{kud:toric} and is based on a strengthening (Lemma \ref {lem:period}) of the Vey Theorem \ref{thm:2} (see also \cite[Theorem 3.10]{kud:toric}).

{\em Step 1.} By Theorem \ref {thm:2}, each local non-degenerate singularity (a fibration germ at a point, or at an orbit) can be reduced to a normal form by a local symplectomorphism $\Phi$ and a local diffeomorphism $J$. Let us prove persistence of $\Phi$ and $J$ under (small) real-analytic integrable perturbations.

(a) Suppose $m_0\in M$ is a non-degenerate rank-$r$ point and $K=\{m_0\}$. Due to 
Lemma \ref {lem:period} (b), the local diffeomorphisms $\Phi$ and $J$ bringing the local momentum map at $m_0$ to the normal form  \eqref{eq:eli}, \eqref{eq:eli:} are persistent under real-analytic integrable perturbations.

(b) Suppose $\mathcal{O}\subseteq \mathcal{L}$ is a non-degenerate rank-$r$ orbit, and $K\subseteq\mathcal{O}$ its compact subset.
By Theorem \ref {thm:2} (b), there exist a neighbourhood $U(\mathcal{O})$ of $\mathcal{O}$ in $M$, a symplectomorphism $\Phi:(U(\mathcal{O}),\omega)\hookrightarrow (V/\Gamma,\omega_{can})$ and a diffeomorphism $J=(J_1,\dots,J_n):W\hookrightarrow \mathbb{R}^n$ such that $J\circ F\circ\Phi^{-1}$ has a canonical form \eqref{eq:eli}, where 
$V=D^r\times\mathbb{R}^{r_o}\times(S^1)^{r_c}\times(D^2)^{n-r}$ with the standard symplectic form \eqref{eq:eli:}, and $W\subset\mathbb{R}^n$ is a neighbourhood of $F(\mathcal{O})$ in $\mathbb{R}^n$. 
Since we can take a smaller neighbourhood if necessary, we can assume that we have the Hamiltonian $(S^1)^{n-r_o}$-action on a neighbourhood $U(\mathcal{O})^\mathbb{C}\supset U(\mathcal{O})$ of $\mathcal{O}$ in $M^\mathbb{C}$ generated by $n-r_o$ functions having the form $J_r^\mathbb{C}\circ F^\mathbb{C}$ and $iJ_j^\mathbb{C}\circ F^\mathbb{C}$.
Moreover, the orbit $\mathcal{O}$ is fixed under the $(S^1)^{n-r}$-subaction, and the $(S^1)^{r_c}$-subaction is locally-free on $\mathcal{O}$.
We want to show that $\Phi$ and $J$ are persistent on a neighbourhood of $K$ under integrable real-analytic perturbations.

Without loss of generality, we can and will assume that $K_0\subseteq K\subseteq K_1$, where $K_0$ is an $(S^1)^{r_c}$-orbit, and $K_1$ is the union of $r_c$-tori $\phi_{J_1\circ F}^{t_1}\circ\dots\circ\phi_{J_{r_o}\circ F}^{t_{r_o}}(K_0)$ forming a $r_o$-parameter family with parameters $t=(t_1,\dots,t_{r_o})\in B_{0,C}:=\{u\in\mathbb{R}^{r_o}\mid |u|<C\}$, 
for some fixed real value $C>0$ (here $\phi_f^t$ denotes the Hamiltonian flow generated by a function $f$).
Choose a neighbourhood $U_1$ of $K$ in $M^\mathbb{C}$ having a compact closure $\overline{U_1}\subset U(\mathcal{O})^\mathbb{C}$.

Now, we can follow the same arguments as in our proof of 
Lemma \ref {lem:period} (b) and Theorem \ref {thm:2} (b) (see App.~\ref {sec:app}). In this way, we see that the $(S^1)^{n-r_o}$-action and its normalization at the $r_c$-torus $K_0\subseteq\mathcal{O}$ in Theorem \ref {thm:2} (b) are persistent and rigid (resp.) under (small) integrable real-analytic perturbations. 
In detail, if the ``perturbation'' is $\varepsilon$-small, we can construct neighbourhoods $U,\tilde U\subseteq U(\mathcal{O})^\mathbb{C}$ of $K_0$ in $M^\mathbb{C}$ each of which contains $U_1$, a ``perturbed'' $(S^1)^{n-r_o}$-action on the neighbourhood $\tilde U$, a ``perturbed'' real-analytic Vey momentum map $\tilde J\circ\tilde F$ on $\tilde U\cap M$ and a ``perturbed'' real-analytic symplectomorphism $\tilde\Phi: (\tilde U\cap M,\tilde\omega)\hookrightarrow(V/\Gamma,\omega_{can})$ such that $\tilde\Phi(\tilde U\cap M)=\Phi(U\cap M)$ and 
$\tilde J\circ\tilde F\circ\tilde\Phi^{-1}$ has a canonical form \eqref{eq:eli}.
Moreover, the ``perturbed'' change $(\tilde\Phi,\tilde J)$ is $O(\varepsilon)$-close to the ``unperturbed'' change $(\Phi,J)$.

We can extend the symplectomorphism $\tilde\Phi$ to the ``perturbed'' neighbourhood $\tilde U(K):=\bigcup\limits_{t\in B_{0,C}}\phi_{\tilde J_1\circ\tilde F}^{t_1}\circ\dots\circ\phi_{\tilde J_{r_o}\circ\tilde F}^{t_{r_o}}(\tilde U\cap M)$ of $K_1$ using the ``perturbed'' Hamiltonian $\mathbb{R}^{r_o}$-action generated by $\tilde J_1\circ\tilde F,\dots,\tilde J_{r_o}\circ\tilde F$. Thus $\tilde J\circ\tilde F\circ\tilde\Phi^{-1}=(h_1,\dots,h_n)$ on the whole $\tilde U(K)$, as required. We also have $\tilde\Phi(\tilde U(K))=\Phi(U(K))$ for the similar ``unperturbed'' neighbourhood $U(K):=\bigcup\limits_{t\in B_{0,C}}\phi_{J_1\circ F}^{t_1}\circ\dots\circ\phi_{J_{r_o}\circ F}^{t_{r_o}}(U\cap M)$ of $K_1\supseteq K$.

{\em Step 2.} Thus $F|_{U(K)}$ and $\tilde F|_{\tilde U(K)}$ are conjugated via the symplectomorphism $\tilde\Phi^{-1}\circ\Phi:U(K)\to\tilde U(K)$ and the diffeomorphism $\tilde J^{-1}\circ J:W\to\tilde W$:
$$
\tilde F=(\tilde J^{-1}\circ J)\circ F\circ(\tilde\Phi^{-1}\circ\Phi)^{-1},
$$
which yields symplectic structural stability of the singularity at $K$ in a strong sense (Definition \ref {def:stab}).
\end{proof}

\begin{Definition} \label {def:Vey:momentum:}
The composition $\tilde J\circ\tilde F$ from the proof of Theorem \ref {thm:stab:} (a) (resp.\ (b)) will be called {\em a perturbed Vey momentum map near the point} $m_0$ (resp.\ {\em near the orbit} $\mathcal{O}$).
\end{Definition}

\section {Structural stability of semilocal singularities} \label {sec:proof}

In this section, we formulate Principle Lemma and derive Theorem \ref {thm:stab} from it. 

Let $\mathcal{L}\subset M$ be a compact non-degenerate fiber satisfying the connectedness condition. Let $\mathcal{O}_0\subseteq \mathcal{L}$ be an orbit of minimal rank in $\mathcal{L}$ satisfying the properties from Definition \ref {def:conn} of the connectedness condition. Due to Theorem \ref {thm:2} (b), we can define a Vey momentum map $J\circ F$ at $\mathcal{O}_0$ (Definition \ref {def:Vey:momentum}).
Observe that $\mathcal{O}_0$ is compact (otherwise its boundary contains a compact orbit of smaller rank, because $\mathcal{L}$ is almost non-degenerate, see Subsec.~\ref {subsec:conn}), thus it is symplectically structurally stable in a strong sense by Theorem \ref {thm:stab:}, so we can define a perturbed Vey momentum map $\tilde J\circ \tilde F$ near $\mathcal{O}_0$ (Definition \ref {def:Vey:momentum:}).

\begin{PrincipleLemma} \label {lem:3}
Under the above assumptions, every orbit $\mathcal{O}\subseteq \mathcal{L}$ has the following properties.

{\rm(a)} The Vey momentum map $J\circ F$ at the compact orbit $\mathcal{O}_0$ can serve as a Vey momentum map at the orbit $\mathcal{O}$ (Definition \ref {def:Vey:momentum}), with the same regular, elliptic, hyperbolic and focus-focus components apart from some of the hyperbolic and/or focus-focus components at $\mathcal{O}_0$ which are regular components at $\mathcal{O}$. 
If $\mathcal{O}$ is compact then it has the same rank and the same Williamson type as $\mathcal{O}_0$.

{\rm(b)} The perturbed Vey momentum map $\tilde J\circ\tilde F$ near $\mathcal{O}_0$ (Definition \ref {def:Vey:momentum:}) can serve as a perturbed Vey momentum map near $\mathcal{O}$.
\end{PrincipleLemma}

Thus, Principle Lemma \ref {lem:3} shows that, in order to construct Vey momentum maps for all orbits in $\mathcal{L}$, it is enough to construct it just for one compact orbit $\mathcal{O}_0$ in $\mathcal{L}$. Actually, Principle Lemma is very useful for symplectic classification of singularities under consideration, as we will show in the next work.

We remark that it is not hard to prove that, if the properties (a) and (b) hold for all compact orbits in $\mathcal{L}$, then they hold for all orbits in $\mathcal{L}$ (including non-compact ones), but proving the properties (a) and (b) for all compact orbits in $\mathcal{L}$ is more difficult and requires the assumptions on the fiber (namely, non-degeneracy and fulfillment of the connectedness condition). 

A proof of Principle Lemma is given in App.~\ref {sec:proof:}.
This lemma was proved in \cite[proof of Theorem~4.1]{izos:diss} for focus singularities satisfying connectedness condition (iii, iv) (or (v, vi)) of Theorem \ref {thm:stab}; it was used for proving that the equivalence and $C^\infty$-smooth equivalence actually coincide for such semilocal singularities \cite[Theorem~4.1]{izos:diss}.

\subsection {Proof of Theorem \ref {thm:stab}}

Let $U(\mathcal{O}_0)$ and $U(\mathcal{L})$ be small neighbourhoods of $\mathcal{O}_0$ and $\mathcal{L}$, resp.
For proving Theorem \ref {thm:stab}, it suffices to show that, if an integrable perturbation is small, then there exist a neighbourhood $\tilde U(\mathcal{L})$ of $\mathcal{L}$ close to $U(\mathcal{L})$ and a (perhaps, non-analytic) homeomorphism $\Psi:\tilde U(\mathcal{L})\to U(\mathcal{L})$ close to the identity such that $\tilde F=\tilde J^{-1}\circ J\circ F\circ\Psi$.

{\em Step 1.} Observe that the image and the ``unperturbed'' bifurcation diagram of a Vey momentum map $J\circ F$ at a compact orbit $\mathcal{O}_1\subseteq \mathcal{L}$ are standard and are completely determined by the Williamson type of $\mathcal{O}_1$. 
By Theorem \ref {thm:stab:}, every compact orbit is structurally stable in a strong sense under integrable real-analytic perturbations, thus its Williamson type is preserved under such perturbations.
Thus, Principle Lemma \ref{lem:3} implies that the ``unperturbed'' bifurcation diagram of $J\circ F|_{U(\mathcal{L})}$ is the same as the ``unperturbed'' bifurcation diagram of $J\circ F|_{U(\mathcal{O}_1)}$, and the same as the ``perturbed'' bifurcation diagram of $\tilde J\circ\tilde F|_{\tilde U(\mathcal{L})}$.
In particular, the perturbed semilocal singularity at $\tilde {\mathcal{L}}:=\tilde F^{-1}(\tilde J^{-1}(0))$ satisfies the non-splitting condition (Definition \ref {def:non:split}).

Without loss of generality, we can and will assume that $J$ and $\tilde J$ coincide with the identity.
Due to Principle Lemma \ref{lem:3}, for each singular orbit $\mathcal{O}\subset \mathcal{L}$, the perturbed fiber $\tilde {\mathcal{L}}$ contains a singular orbit $\tilde{\mathcal{O}}$ close to $\mathcal{O}$ and having the same rank, Williamson type and local bifurcation diagram as those of $\mathcal{O}$.

{\em Step 2.} Due to the Zung topological classification \cite[Theorem 7.3]{zung96a} of non-degenerate semilocal singularities satisfying the non-splitting condition, the semilocal singularity at $\mathcal{L}$ is equivalent to the almost-direct product of several semilocal singularities of the following types: regular, elliptic, hyperbolic and focus-focus ones. 

The proof of \cite[Theorem 7.3]{zung96a} uses the {\em $l$-type} of the singularity at $\mathcal{L}$, which is the (unordered) collection $(V_1,\dots,V_{k_e+k_h+k_f})$ of symplectic foliated $(2r+2)$- or $(2r+4)$-submanifolds $V_i=
\left(\bigcap\limits_{\substack{a=1 \\ a\ne i}}^{k_e+k_h}\mathbb{K}_{r+a}\right)\cap
\left(\bigcap\limits_{\substack{b=1 \\ k_e+k_h+b\ne i}}^{k_f}\mathbb{K}_{r+k_e+k_h+2b}\right) \subset U({\mathcal L})$ with singular fibres $K_i=V_i\cap {\mathcal L}$ (if $r=0$, then $(V_i,K_i)$ are so-called ``atoms'', may be non-connected).
Connected components of $K_i\setminus\cup(\mbox{compact orbits})$ called primitive orbits can be ``moved'' along each other  \cite[proof of Theorem 7.3]{zung96a}, provided that the given two primitive orbits ${\mathcal O}_1$ and ${\mathcal O}_2$ lie in different $V_i$ and in the closure of an orbit ${\mathcal O}\subset\mathcal{L}$ of dimension $\dim{\mathcal O}=\dim{\mathcal O}_1+\dim{\mathcal O}_2-r$, where $\partial{\mathcal O}={\mathcal O}_1+{\mathcal O}_2-{\mathcal O}_1'-{\mathcal O}_2'$ algebraically. Here, by moving ${\mathcal O}_1$ along ${\mathcal O}_2$, one get a primitive orbit ${\mathcal O}_1'$, which lies in the same $V_i$ as ${\mathcal O}_1$. By moving ${\mathcal O}_2$ along ${\mathcal O}_1$, one get a primitive orbit ${\mathcal O}_2'$, which lies in the same $V_j$ as ${\mathcal O}_2$. 
Let $C\subset U(\mathcal{L})$ be the union of all $V_i$ and of all orbits $\mathcal O\subset\mathcal{L}$ corresponding to the moves of primitive orbits (see above).
Let the tuple $(C, V_1,\dots,V_{k_e+k_h+k_f})$, equipped with inclusions $V_i\hookrightarrow C$ and some orientations, be called the {\em $Cl$-type} of the singularity at $\mathcal{L}$ \cite{BF}.
In the saddle-saddle case ($r=k_e=k_f=0$ and $k_h=2$), two singularities are equivalent if and only if their $Cl$-types are isomorphic \cite{mat96}. A key ingredient of \cite[proof of Theorem 7.3]{zung96a} is to show that the $Cl$-type of the singularity at $\mathcal{L}$ is isomorphic to the $Cl$-type of an almost-direct-product singularity.

One can deduce from Step 1 that the semilocal singularities at $\mathcal{L}$ and $\tilde {\mathcal{L}}$ have naturally isomorphic $Cl$-types. Applying the same 
arguments\footnote{Instead of arguments from \cite{zung96a}, we can apply Principle Lemma \ref {lem:3} for obtaining another proof of the fact that the singularities at $\mathcal L$ and $\tilde{\mathcal L}$ are equivalent in a strong sense.} 
as in \cite[proof of Theorem 7.3]{zung96a}, one obtains that these singularities are equivalent in a strong sense, as required. \qed

\section {Structurally stable non-degenerate singularities of the Kovalevskaya top} \label{sec:kov}

The base of the Liouville fibration is called the {\em bifurcation complex}; it was introduced by A.T.\ Fomenko (the cell-complex $K$ in \cite[Sec.~5]{fom},
the affine variety $A$ in \cite{efs:gia} called the {\em unfolded momentum domain}); it is a (branched) covering of the {\em momentum domain} $F(M)$ \cite{efs:gia}.
As pointed out by V.I.\ Arnold, it is interesting to investigate singularities of the bifurcation complex. 
S.P.\ Novikov stated the following important question: in which form does the bifurcation complex, as a topological invariant of the Liouville fibration, ``feels'' algebraicity of the momentum map $F$.

\begin{figure}[htbp]
\includegraphics[width=0.12\textwidth]{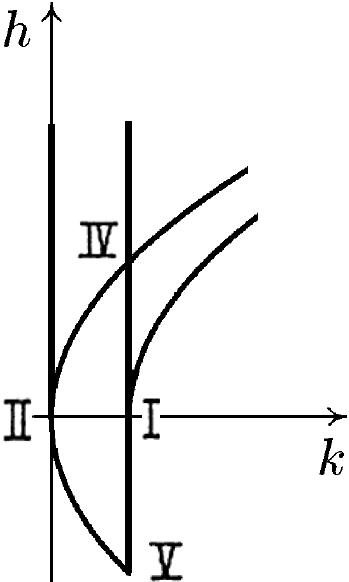} 
\includegraphics[width=0.25\textwidth]{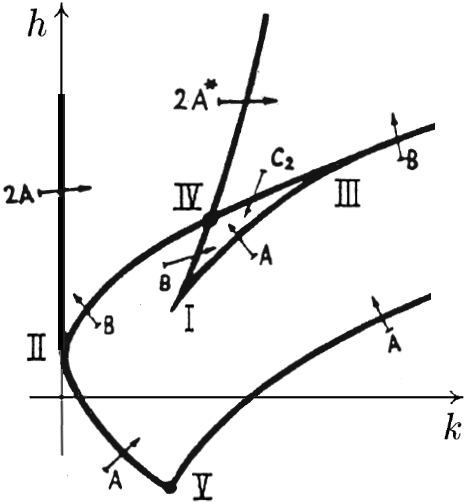} 
\includegraphics[width=0.2\textwidth]{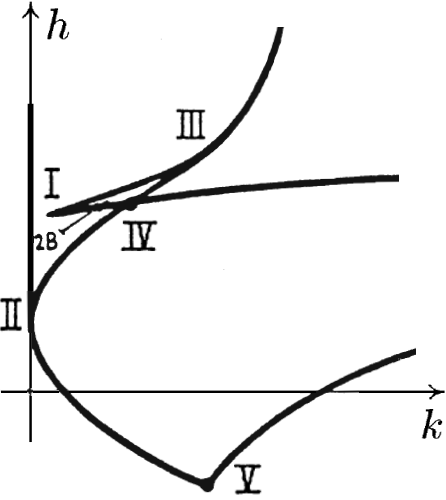}  
\includegraphics[width=0.19\textwidth]{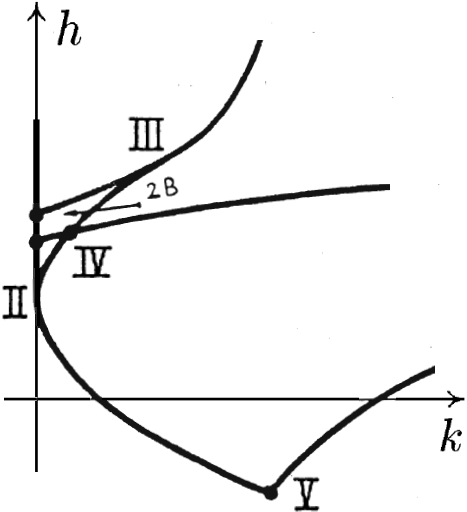} 
\includegraphics[width=0.2\textwidth]{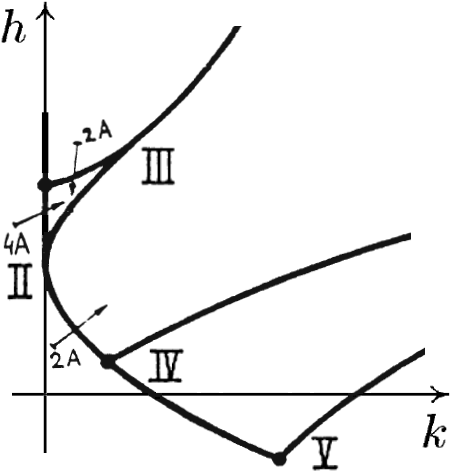}

{\small(a)} \qquad\qquad\qquad\quad {\small(b)} \qquad\qquad\qquad\qquad\ {\small(c)} \qquad\qquad\qquad\qquad {\small(d)} \qquad\qquad\qquad\quad {\small(e)} \phantom{oooo}
\caption {Bifurcation diagram and singularity types for the Kovalevskaya top with  (a) $g=0$, (b) $0<g^2<1$, (c) $1<g^2<8/(3\sqrt 3)$, (d) $8/(3\sqrt 3)<g^2<2$, (e) $g^2>2$.} \label {fig:koval}
\end{figure}

\begin{Ex} [Kovalevskaya's top] \label {exa:koval}
Consider the motion of a rigid body with a fixed point in a gravity field. The dynamical system is a Hamiltonian system on $e(3)^*$ which is $\mathbb{R}^6_{(R_1,R_2,R_3,S_1,S_2,S_3)}$ with the Poisson structure 
\begin{equation} \label {eq:poisson}
\{S_i,S_j\}=\varepsilon_{ijk}S_k, \quad \{R_i,R_j\}=0, \quad \{S_i,R_j\}=\{R_i,S_j\}=\varepsilon_{ijk}R_k,
\end{equation}
where $\{i,j,k\}=\{1,2,3\}$, and $\varepsilon_{ijk}$ is either the sign of the permutation $(i,j,k)$ if all the $i,j,k$ are different, or $0$ otherwise.
If the principle moments of inertia $(I_1,I_2,I_3)$ of the rigid body are proportional to $(2,2,1)$ and the center of masses lies in the plane perpendicular to the symmetry axis, the rigid body is called the {\em Kovalevskaya top}. The corresponding Hamiltonian system is integrable, with 
first integrals
$$
H=\frac12\left(S_1^2+S_2^2+2S_3^2\right)+R_1, \quad f_1=R_1^2+R_2^2+R_3^2, \quad f_2=S_1R_1+S_2R_2+S_3R_3,
$$
$$
K=\left(\frac{S_1^2}2-\frac{S_2^2}2-R_1\right)^2+(S_1S_2-R_2)^2
$$ 
(here $H$ is the Hamilton function, $f_i$ are Casimir functions of the Poisson bracket). 
Consider the restriction of this Kovalevskaya's system to a symplectic leaf $M^4_g=\{f_1=1,\ f_2=g\}$, where $g\in\mathbb{R}$ is the ``area constant'' regarded as a parameter of the system. 
The bifurcation diagrams (together with the bifurcation complexes) of the momentum maps $F_g=(H,K)|_{M^4_g}:M^4_g\to\mathbb{R}^2$ are shown in \cite {kha83} and in Fig.\ \ref {fig:koval}.

All local non-degenerate singularities are symplectically structurally stable in a strong sense under real-analytic integrable perturbations, due to Theorem \ref {thm:stab:}. Let us describe them.
Every open arc of the bifurcation diagram corresponds to one or two $1$-parameter families of rank-$1$ singular orbits of elliptic or hyperbolic types.
The vertices $IV$ and $V$ correspond to rank-$0$ singular points of hyperbolic-hyperbolic (if $1\ne g^2<2$), hyperbolic-elliptic (if $g^2>2$) and elliptic-elliptic types, respectively; these points form the fixed point set of the ${\mathbb Z}_2$-action on $M^4_g$ given by the canonical involution $(R_1,R_2,R_3,S_1,S_2,S_3)\mapsto(R_1,-R_2,-R_3,S_1,-S_2,-S_3)$ of the Kovalevskaya system.

All semilocal non-degenerate rank-$0$ singularities also happen to be structurally stable in a strong sense under real-analytic integrable perturbations (although not all under $C^\infty$ integrable perturbations), but some non-degenerate rank-$1$ singularities are not: 
\begin{itemize}
\item Due to \cite{kha83} or Corollary \ref {cor:stab}, all open arcs of the bifurcation diagram (Fig.~\ref {fig:koval}), except for ${\mathcal C}_2$, correspond to 1-parameter families of structurally stable in a strong sense (under real-analytic integrable perturbations) semilocal rank-$1$ singularities ${\mathcal A}, {\mathcal B}, {\mathcal A}^*$ (Fig.~\ref {fig:atoms}). However the semilocal rank-$1$ singularities corresponding to points of the open arc ${\mathcal C}_2$ (Fig.~\ref {fig:koval}, \ref {fig:C2:perturb}) are structurally unstable under $C^\infty$ integrable perturbations (this easily follows from the existence of a system-preserving free Hamiltonian $S^1$-action near such a semilocal singularity \cite[Theorem 4.1]{zung96a}). A related perturbation is shown in Fig.~\ref {fig:C2:perturb}.
\item Due to Corollary \ref {cor:stab}, the vertices $IV$ and $V$ correspond to structurally stable in a strong sense (under real-analytic integrable perturbations) semilocal rank-$0$ singularities. However the semilocal rank-$0$ singularity at $IV$ is structurally unstable under $C^\infty$ integrable perturbations if $g^2<1$ (since the singular fiber $IV$ is contained in the closure of the family of singular fibers ${\mathcal C}_2$, which are structurally unstable under $C^\infty$ integrable perturbations by above).
\end{itemize}
Note that the semilocal singularities corresponding to ${\mathcal C}_2$ (and probably to $IV$, topologically $({\mathcal B}\times {\mathcal C}_2)/\mathbb{Z}_2$, for $g^2<1$) are structurally stable under ${\mathbb Z}_2$-symmetry-preserving $C^\infty$ integrable perturbations, since ${\mathcal C}_2/{\mathbb Z}_2={\mathcal B}$ is structurally stable.

\begin{figure}[htbp]
\begin{center}
\includegraphics[width=0.2\textwidth]{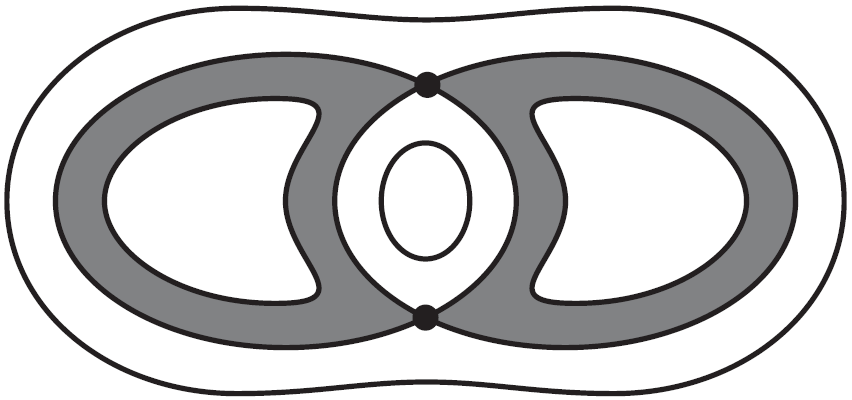}\qquad 
\includegraphics[width=0.2\textwidth]{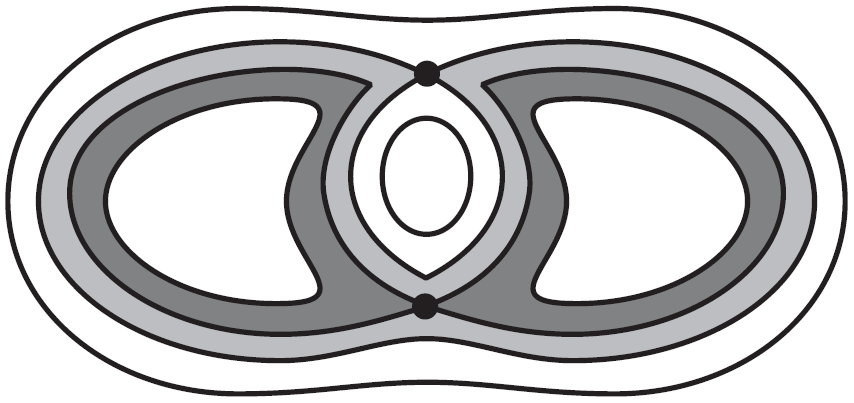} 

\phantom{oooooooooooooooo} $={\mathcal C}_2= \qquad\qquad\qquad\quad ={\mathcal B}-{\mathcal B}=$
\end{center}
\caption {Semilocal singularity ${\mathcal C}_2$ and its perturbed fibration (with notations due to \cite{BF}). White circles transform to gray ones through a singular fiber, and after that gray circles transform to black ones.} \label {fig:C2:perturb}
\end{figure}

The arrangement of semilocal singularities corresponding to any closed arc contained in the half-open arc ${\mathcal C}_2\cup\{IV\}$ and containing the point $IV$ for $g^2<1$ is structurally stable in a strong sense under real-analytic integrable perturbations (even under ${\mathbb Z}_2$-symmetry-breaking perturbations, although not under $C^\infty$ integrable perturbations, by above) and satisfies the assertion of Principle Lemma \ref {lem:3}. This can be proved using structural stability of $IV$ under real-analytic integrable perturbations (see above) and Principle Lemma \ref {lem:3} for $IV$, by noticing that the assertion of Principle Lemma can be extended from $IV$ to ${\mathcal C}_2\cup\{IV\}$ by analytic continuation.
\end{Ex}

\appendix
\section{Proof of Principle Lemma \ref {lem:3}, Theorem \ref {thm:stab:help} and Corollary \ref {cor:stab:help}} \label {sec:proof:}

Let $\mathcal{L}\subset M$ be a compact rank-$r$ fiber satisfying the connectedness condition (Definition \ref {def:conn}), and let $\mathcal{O}_0\subset \mathcal{L}$ be the corresponding non-degenerate rank-$r$ orbit.
Suppose Williamson type of $\mathcal{O}_0$ is $(k_e,k_h,k_f)$. 
By Theorem \ref {thm:2} (b), there exist a neighbourhood $U(\mathcal{O}_0)$ of $\mathcal{O}_0$, a local symplectomorphism $\Phi:U(\mathcal{O}_0)\hookrightarrow\mathbb{R}^{2n}$ at $\mathcal{O}_0$ and a real-analytic diffeomorphism $J=(J_1,\dots,J_n):W\hookrightarrow\mathbb{R}^n$ such that 
\begin{equation} \label {eq:canon}
J\circ F\circ\Phi^{-1} = (h_1,\dots,h_n)
\end{equation}
(cf.\ \eqref{eq:eli}, \eqref {eq:eli:}), so $J\circ F$ is a Vey momentum map at the orbit $\mathcal{O}_0$ (Definition \ref {def:Vey:momentum}). 

Since $\mathcal{O}_0$ is non-degenerate and compact, it is symplectically structurally stable in a strong sense by Theorem \ref {thm:stab:}. Thus, for a perturbed system, we have similar objects $\tilde U(\mathcal{O}_0)$, $\tilde\Phi$ and $\tilde J$ close to $U(\mathcal{O}_0)$, $\Phi$ and $J$ resp., where $\tilde\Phi:\tilde U(\mathcal{O}_0)\to\mathbb{R}^{2n}$ is a perturbed local symplectomorphism, and $\tilde J:\tilde W\to\mathbb{R}^n$ a perturbed diffeomorphism (as in the proof of Theorem \ref {thm:stab:}) near the compact rank-$r$ orbit $\mathcal{O}_0$.
In particular, we have a perturbed Vey momentum map $\tilde J\circ \tilde F$ near $\mathcal{O}_0$ (Definition \ref {def:Vey:momentum:}).

Denote $\mathbb{K}_i\cap \mathcal{L}$ by $K_i$ (for $r+1\le i\le n$).
Due to Remark \ref {rem:proof:thm:stab:help}, for proving Theorem \ref {thm:stab:help}, it is enough to prove the implications (i, ii)$\Longrightarrow$(iii) and (i, non-degeneracy of $\mathcal{L}$)$\Longrightarrow$(ii).
Principle Lemma \ref {lem:3} and Theorem \ref {thm:stab:help} readily follow from the following lemma.

\begin{Lemma} \label {lem:4}
Under the above assumptions, the following holds.

{\rm(a)} The functions 
$$
J_s\circ F, \quad 1\le s\le r+k_e, \qquad 
J_{r+k_e+k_h+2j}\circ F, \quad 1\le j\le k_f
$$
(corresponding to regular and elliptic components and elliptic parts of focus-focus components at $\mathcal{O}_0$, cf.\ \eqref{eq:canon}) generate a Hamiltonian $(S^1)^{r+k_e+k_f}$-action near the fiber $\mathcal{L}$ w.r.t.\ $\omega$.
The fiber $\mathcal{L}$ is fixed under the $(S^1)^{k_e}$-subaction of this action; each $\mathbb{K}_{r+i}$ is a Bott critical points set of the function $J_{r+i}\circ F$, $1\le i\le k_e$. The ``perturbed'' functions 
\begin{equation} \label {eq:periodic}
\tilde J_s\circ\tilde F, \quad 1\le s\le r+k_e, \qquad 
\tilde J_{r+k_e+k_h+2j}\circ\tilde F, \quad 1\le j\le k_f,
\end{equation}
generate a Hamiltonian $(S^1)^{r+k_e+k_f}$-action near the fiber $\mathcal{L}$ w.r.t.\ $\tilde\omega$.

{\rm(b)} If $K_j$ corresponds to a hyperbolic component $h_j$ in \eqref{eq:canon}, then $\mathbb{K}_j$ is a Bott critical points set of the function $J_j\circ F$, the function $iJ_j^\mathbb{C}\circ F^\mathbb{C}$ generates a Hamiltonian $S^1$-action near $K_j$ w.r.t.\ $\omega^\mathbb{C}$, and the function $i\tilde J_j^\mathbb{C}\circ\tilde F^\mathbb{C}$ generates a Hamiltonian $S^1$-action near $K_j$ w.r.t.\ $\tilde\omega^\mathbb{C}$. 

If $K_j$ corresponds to focus-focus components $h_{j-1},h_j$ in \eqref{eq:canon}, then $\mathbb{K}_j$ is a Bott critical point set of each function $J_{j-1}\circ F$ and $J_j\circ F$, the functions $iJ_{j-1}^\mathbb{C}\circ F^\mathbb{C}$ and $J_j^\mathbb{C}\circ F^\mathbb{C}$ generate a Hamiltonian $(S^1)^2$-action near $K_j$ w.r.t.\ $\omega^\mathbb{C}$, and the functions $i\tilde J_{j-1}^\mathbb{C}\circ\tilde F^\mathbb{C}$ and $\tilde J_j^\mathbb{C}\circ\tilde F^\mathbb{C}$ generate a Hamiltonian $(S^1)^2$-action near $K_j$ 
w.r.t.\ $\tilde\omega^\mathbb{C}$.

{\rm(c)} Suppose that $\mathcal{L}$ is almost non-degenerate (see Subsec.~\ref {subsec:conn}), and 
$\mathcal{L}$ is either non-degenerate or satisfies the condition {\rm(ii)} from Theorem \ref {thm:stab:help}. Suppose that an orbit $\mathcal{O}\subset \mathcal{L}$ has rank $r'$ and lies in $k_h'$ of the $k_h$ subsets $K_j$ corresponding to hyperbolic components $h_j$ and in $k_f'$ of the $k_f$ subsets $K_j$ corresponding to focus-focus components $h_{j-1},h_j$ in \eqref{eq:canon}.
Then $r'+k_e+k_h'+2k_f'=n$; $\mathcal{O}$ is non-degenerate of Williamson type $(k_e,k_h',k_f')$ and rank $r'=r+k_h-k_h'+2(k_f-k_f')\ge r$.
In particular, the $(S^1)^r$-action generated by $J_1\circ F,\dots,J_r\circ F$ (corresponding to regular components at $\mathcal{O}_0$) is locally free on $\mathcal{O}$ (and, hence, on $\mathcal{L}$). 
Furthermore, the map $J\circ F$ is a Vey momentum map at $\mathcal{O}$ (Definition \ref {def:Vey:momentum}) w.r.t.\ $\omega$ with the same regular, elliptic, hyperbolic and focus-focus components apart from $k_h-k_h'$ hyperbolic and $2(k_f-k_f')$ focus-focus components at $\mathcal{O}_0$ which are regular components at $\mathcal{O}$; 
the map $\tilde J\circ\tilde F$ is a perturbed Vey momentum map near $\mathcal{O}$ (Definition \ref {def:Vey:momentum:}) w.r.t.\ $\tilde\omega$.
\end{Lemma}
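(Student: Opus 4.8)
The plan is to prove (a) and (b) first---these set up the (real and complexified) torus actions that organize the whole fibre---and then to obtain (c) by reading off, orbit by orbit, which normalizing directions have turned regular. Throughout, the mechanism is the same: a generator is periodic near the central compact orbit $\mathcal{O}_0$ by the normal form of Theorem \ref{thm:2} (b), and I would spread this periodicity over a neighbourhood of $\mathcal{L}$ (resp.\ of $\mathbb{K}_j$) by analytic continuation.

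For (a), in the canonical coordinates of \eqref{eq:canon} the listed functions are already periodic near $\mathcal{O}_0$: since $\mathcal{O}_0$ is compact we have $r_o=0$, so the $r$ regular components generate the torus $\mathcal{O}_0\cong(S^1)^r$, each elliptic $\tfrac12(x_j^2+y_j^2)$ generates a planar rotation, and each focus-focus component $x_{j+1}y_j-y_{j+1}x_j$ generates the diagonal rotation of its two planes. To pass to a full neighbourhood, I would look at the time-$T$ map $g$ of each of these commuting flows (with $T$ the period near $\mathcal{O}_0$): $g$ is a real-analytic symplectomorphism equal to the identity on an open set near $\mathcal{O}_0$, so $g=\mathrm{id}$ on the connected neighbourhood $U(\mathcal{L})$ by the real-analytic identity theorem, which produces the $(S^1)^{r+k_e+k_f}$-action. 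That $\mathcal{L}$ is fixed by the $(S^1)^{k_e}$-subaction and that each $\mathbb{K}_{r+i}$ is the (Bott, nondegenerate-transverse) fixed-point set follows because elliptic components vanish on $\mathcal{L}$ (Remark \ref{rem:conn}). The perturbed claim is verbatim, with $(\omega,F,J)$ replaced by $(\tilde\omega,\tilde F,\tilde J)$ and the perturbed action near $\mathcal{O}_0$ supplied by Theorem \ref{thm:stab:} (through Lemma \ref{lem:period}).

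Part (b) is the same continuation after complexifying and rescaling the non-periodic generators. A hyperbolic $h_j=x_jy_j$ has the non-periodic flow $(x_j,y_j)\mapsto(e^tx_j,e^{-t}y_j)$, whereas $iX_{h_j^{\mathbb C}}$ has the $2\pi$-periodic flow $(x_j,y_j)\mapsto(e^{it}x_j,e^{-it}y_j)$ on the complexification; hence $iJ_j^{\mathbb C}\circ F^{\mathbb C}$ generates a holomorphic $S^1$-action, established near $\mathcal{O}_0$ and extended by the holomorphic identity theorem along the connected set $\mathbb{K}_j$, while the focus-focus case pairs this with the already-periodic rotational generator to give $(S^1)^2$. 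This is exactly where the connectedness condition is indispensable: it guarantees that $\mathbb{K}_j$ is connected and contains $\mathcal{O}_0$, so periodicity proven near $\mathcal{O}_0$ fills all of $\mathbb{K}_j$. The point I expect to be most delicate is completeness---the time-$T$ map must be globally defined on a connected neighbourhood before the identity theorem applies---and I would settle it by working on the complexification, where the normalized (rotational) flows are bounded, shrinking to a neighbourhood on which the flow is complete for $|t|\le T$ and which still deformation-retracts onto the connected $\mathbb{K}_j$.

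Finally, for (c) I would fix $\mathcal{O}\subset\mathcal{L}$ and extract its local model from the actions of (a), (b). On $\mathcal{L}$ every non-regular component of $J\circ F$ vanishes, so $\mathcal{O}$ lies in all $k_e$ elliptic sets and in the prescribed $k_h'$ hyperbolic and $k_f'$ focus-focus sets $K_j$; along the remaining $k_h-k_h'$ hyperbolic and $k_f-k_f'$ focus-focus directions $\mathcal{O}$ sits on a separatrix with the relevant coordinate nonzero, so there $\mathrm{d}(J_j\circ F)\neq0$ and that direction has become regular. Counting independent differentials yields the rank formula $r'=r+(k_h-k_h')+2(k_f-k_f')$ together with $r'+k_e+k_h'+2k_f'=n$, and the surviving singular directions display Williamson type $(k_e,k_h',k_f')$. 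Non-degeneracy of $\mathcal{O}$ (and, under hypothesis (ii) of Theorem \ref{thm:stab:help}, the symplectic transversality of the $\mathbb{K}_j$) then lets me invoke Theorem \ref{thm:2} at $\mathcal{O}$ with the \emph{same} diffeomorphism $J$: the flows that normalize the singularity at $\mathcal{O}$ are precisely the $S^1$- and complexified $S^1$-actions of (a), (b), whose generators are the same functions $J_s\circ F$, so the action coordinates---and hence $J$---transfer unchanged, the discarded hyperbolic/focus directions reappearing as regular components. Local freeness of the $(S^1)^r$-action on $\mathcal{O}$ is then immediate, its generators being regular components everywhere on $\mathcal{L}$. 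The perturbed assertion follows identically from the perturbed actions of (a), (b) and the perturbed normal form near $\mathcal{O}_0$; the genuine obstacle in the whole argument is upgrading this global action data to an honest local product normal form at $\mathcal{O}$, which is precisely where non-degeneracy and the connectedness condition must be used together.
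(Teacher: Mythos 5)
Your parts (a) and (b) are correct and essentially reproduce the paper's own argument: periodicity of the listed generators near $\mathcal{O}_0$ comes from the Vey normal form of Theorem \ref{thm:2}~(b), hyperbolic and focus directions are handled by passing to $iJ_j^{\mathbb C}\circ F^{\mathbb C}$ on the complexification, and in both cases the time-$2\pi$ map, being the identity on $U(\mathcal{O}_0)$ (resp.\ $U(\mathcal{O}_0)^{\mathbb C}$), is spread over a neighbourhood of the connected set $\mathcal{L}$ (resp.\ $K_j$) by uniqueness of analytic continuation; the fixed-point description of $\mathbb{K}_j$ gives the Bott property, and the perturbed statements follow from Lemma \ref{lem:period}~(b) via Theorem \ref{thm:stab:}. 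This is exactly the route the paper takes.

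Part (c), however, has a genuine gap, and it is the one you yourself flag at the end. First, you \emph{assume} non-degeneracy of $\mathcal{O}$ in order to invoke Theorem \ref{thm:2} at $\mathcal{O}$, but non-degeneracy of $\mathcal{O}$ is a \emph{conclusion} of (c): in the hypothesis branch where $\mathcal{L}$ is only almost non-degenerate and satisfies condition (ii) of Theorem \ref{thm:stab:help}, assuming it is circular (this is precisely what makes Lemma \ref{lem:4} strong enough to prove the equivalence in Theorem \ref{thm:stab:help}). Second, the rank formula cannot be obtained by ``counting independent differentials'': knowing that each $\mathrm{d}(J_j\circ F)\neq 0$ on $\mathcal{O}$ for the turned-regular directions does not give their joint independence, nor an upper bound matching the lower one. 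The paper closes both gaps by a case analysis you omit. For \emph{compact} $\mathcal{O}$: the connectedness condition forces $k_h'=k_h$, $k_f'=k_f$; the fixed-point property under the $(S^1)^{n-r}$-action gives $r'\le r$, and minimality of the rank of $\mathcal{O}_0$ gives $r'=r$; then the linearization theorem for real-analytic Hamiltonian torus actions \cite[Theorem 3.10]{kud:toric} produces a symplectomorphism under which the components $J_i\circ F$ become \emph{integer combinations} $\sum_j k_{ij}h_j'$ of canonical quadratics, and only at this point is hypothesis (ii) (or non-degeneracy of $\mathcal{L}$) used to show the matrix $\|k_{ij}\|$ can be taken to be the identity --- this is where non-degeneracy and the Williamson type of $\mathcal{O}$ are \emph{derived}. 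For \emph{noncompact} $\mathcal{O}$: almost non-degeneracy supplies a compact orbit $\mathcal{O}_1\subset\overline{\mathcal{O}}$, Case 1 applies to $\mathcal{O}_1$, Zung's structure results (Remark \ref{rem:a:b}) give the Williamson type $(k_e,k_h-a,k_f-b)$ and hence the rank formula, and then Lemma \ref{lem:period}~(a) (using the transversality/symplecticity of the intersections of the $\mathbb{K}_i$ at $m'\in\mathcal{O}$, i.e.\ hypothesis (ii) again) yields the local normal form, which is extended along the cylinder $\mathcal{O}$ by the locally-free action. Without these ingredients --- the compact/noncompact dichotomy, the toric linearization theorem, and the integer-matrix argument --- your sketch of (c) does not go through.
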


\begin{proof}
(a) Observe that the functions \eqref{eq:periodic} from the Vey presentation generate a Hamiltonian $(S^1)^{r+k_e+k_f}$-action on $\tilde U(\mathcal{O}_0)$ w.r.t.\ $\tilde\omega$, moreover the $(S^1)^r$-subaction is locally-free on $\tilde U(\mathcal{O}_0)$, and $\mathcal{O}_0$ is fixed under the unperturbed $(S^1)^{k_e+k_f}$-subaction.
Since $\mathcal{L}$ is compact, the time-$2\pi$ map of the flow of each vector field $X_{\tilde J_s\circ\tilde F}$, $1\le s\le r+k_e$ or $s=r+k_e+k_h+2j$, $1\le j\le k_f$, is well-defined on a neighbourhood of $\mathcal{L}$. Since this map is the identity on a $U(\mathcal{O}_0)$, and $\mathcal{L}$ is connected, it follows by uniqueness of analytic continuation that it is the identity on a neighbourhood of $\mathcal{L}$. Thus, the functions \eqref {eq:periodic} generate a Hamiltonian $(S^1)^{r+k_e+k_f}$-action on some neighbourhood of $\mathcal{L}$.

Due to \cite[Proposition 2.6]{zung96a}, the fiber $\mathcal{L}$ is fixed under the unperturbed $(S^1)^{k_e}$-subaction.

(b) Suppose that $h_j$ is a hyperbolic component in \eqref{eq:canon}. It follows from the Vey presentation \eqref {eq:eli}, \eqref{eq:eli:}, \eqref{eq:canon} that the function $iJ_j^\mathbb{C}\circ F^\mathbb{C}$ 
generates a Hamiltonian $S^1$-action on some neighbourhood $U(\mathcal{O}_0)^\mathbb{C}$ of $\mathcal{O}_0$ in $M^\mathbb{C}$ w.r.t.\ $\omega$. Similarly, the perturbed function $i\tilde J_j^\mathbb{C}\circ\tilde F^\mathbb{C}$ generates a Hamiltonian $S^1$-action on some neighbourhood $\tilde U(\mathcal{O}_0)^\mathbb{C}$ w.r.t.\ $\tilde\omega$. 

By definition of $K_j$, we have $\mathcal{O}_0\subseteq K_j\subseteq \mathcal{L}$ and $\mathrm{d}(J_j\circ F)=0$ at each point of $K_j$. Therefore, $\mathcal{O}_0$ is fixed under the $S^1$-action generated by $iJ_j^\mathbb{C}\circ F^\mathbb{C}$, and the time-$2\pi$ map of the flows of the vector fields $X_{iJ_j\circ F}$ and $X_{i\tilde J_j\circ\tilde F}$ are well-defined on some neighbourhood of $K_j$ in $M^\mathbb{C}$. Since this time-$2\pi$ maps are the identity on $U(\mathcal{O}_0)^\mathbb{C}$ and $K_j$ is connected, it follows by uniqueness of analytic continuation that these time-$2\pi$ maps are the identity on some neighbourhood of $K_j$. Thus, the functions $iJ_j\circ F$ and $i\tilde J_j\circ\tilde F$ generate Hamiltonian $S^1$-actions on some neighbourhoods $U(K_j)^\mathbb{C}$ and $\tilde U(K_j)^\mathbb{C}$ of $K_j$. We also showed that $(\mathbb{K}_j)^\mathbb{C}$ is the fixed points set on $U(K_j)$ of the unperturbed $S^1$-action, whence $\mathbb{K}_j$ is a symplectic submanifold and it is a Bott critical points set of $J_j\circ F$.

If $h_{j-1},h_j$ are focus-focus components in \eqref{eq:canon}, then similar arguments show that 
\begin{itemize}
\item the functions $iJ_{j-1}^\mathbb{C}\circ F^\mathbb{C},J_j^\mathbb{C}\circ F^\mathbb{C}$ generate a Hamiltonian $(S^1)^2$-action on some neighbourhood $U(K_j)^\mathbb{C}$ of $K_j$ in $M^\mathbb{C}$ w.r.t.\ $\omega^\mathbb{C}$, 
\item $(\mathbb{K}_j)^\mathbb{C}$ is the fixed points set of this $(S^1)^2$-action,
\item the perturbed functions $i\tilde J_{j-1}^\mathbb{C}\circ\tilde F^\mathbb{C}, \tilde J_j^\mathbb{C}\circ\tilde F^\mathbb{C}$ generate a Hamiltonian $(S^1)^2$-action on some neighbourhood $\tilde U(K_j)^\mathbb{C}$ of $K_j$ w.r.t.\ $\tilde\omega^\mathbb{C}$.
\end{itemize}

(c) By (a), the ``regular'' and the ``elliptic'' Vey functions $J_s\circ F$, $1\le s\le r+k_e$,
generate a Hamiltonian $(S^1)^{r+k_e}$-action on some neighbourhood of $\mathcal{L}$, and the $(S^1)^{k_e}$-subaction is fixed on $\mathcal{L}$.

By (b), we have $k_h'$ ``hyperbolic'' functions 
\begin{equation} \label{eq:*}
iJ_{\ell_j}^\mathbb{C}\circ F^\mathbb{C}, \qquad 1\le j\le k_h',
\end{equation}
and 
$2k_f'$ ``focus-focus'' pairs of functions 
\begin{equation} \label{eq:**}
iJ_{\ell_j-1}^\mathbb{C}\circ F^\mathbb{C}, \quad J_{\ell_j}^\mathbb{C}\circ F^\mathbb{C}, \qquad k_h'+1\le j\le k_h'+k_f',
\end{equation}
generating a Hamiltonian $(S^1)^{k_h'+2k_f'}$-action on some neighbourhood $U(\mathcal{O})^\mathbb{C}$ of $\mathcal{O}$, and this action is fixed on $\mathcal{O}$.

Let us first show that $r'+k_e+k_h'+2k_f'=n$ and $\mathcal{O}$ is non-degenerate of Williamson type $(k_e,k_h',k_f')$. Choose a point $m'\in\mathcal{O}$. Consider two cases.

{\em Case 1:} $\mathcal{O}$ is compact. Thus $k_h'=k_h$ and $k_f'=k_f$ by the connectedness condition. 
Thus, $m'$ is a fixed point of the Hamiltonian $(S^1)^{n-r}$-action on $U(\mathcal{O})^\mathbb{C}$ generated by the functions 
$J_{r+i}\circ F$, $1\le i\le k_e$,
$iJ_{r+k_e+j}^\mathbb{C}\circ F^\mathbb{C}$, $1\le j\le k_h$, and 
$iJ_{r+k_e+k_h+2j-1}^\mathbb{C}\circ F^\mathbb{C}, J_{r+k_e+k_h+2j}^\mathbb{C}\circ F^\mathbb{C}$, $1\le j\le k_f$.
Therefore $r'=\operatorname{rank}\mathrm{d} F(m')=\operatorname{rank}\mathrm{d}(J\circ F)(m')\le r=\operatorname{rank}\mathrm{d} F(m_0)$.
But $m_0$ has minimal rank on $\mathcal{L}$ by connectedness condition. Therefore $r'=r$, thus the functions $J_s\circ F$, $1\le s\le r$, generate a locally-free $(S^1)^r$-action on $\mathcal{O}$.

Thus $m'$ is a rank-$r$ point of the Hamiltonian $(S^1)^n$-action generated by the above functions (having the form $J_s\circ F$, $iJ_j\circ F$). 
By \cite[Theorem 3.10]{kud:toric}, there exists a real-analytic symplectomorphism $\Phi':(U(\mathcal{O}),\omega)\to(V/\Gamma',\omega_{can})$ such that 
$$
J_s\circ F\circ{\Phi'}^{-1}=h_s, \quad 1\le s\le r,
$$
the regular components $h_1,\dots,h_r$ on $V$, while 
$$
J_i\circ F\circ{\Phi'}^{-1} = \sum\limits_{j=r+1}^n k_{ij} h_j' , \qquad r+1\le i\le n,
$$
for some integers $k_{ij}$ and $n-r$ quadratic functions $h_i'$ of elliptic, hyperbolic and focus-focus types (the number of components $h_i'$ of each type is not necessary $k_e,k_h,k_f$ as in \eqref{eq:canon}).
As we showed above, each $\mathbb{K}_i^\mathbb{C}$ ($i>r$) is a fixed point set of the corresponding $S^1$-subaction (resp.\ $(S^1)^2$-subaction) of the $(S^1)^{n-r}$-action on $U(\mathcal{O})^\mathbb{C}$, and the type of this subaction is given by $h_i$ (resp.\ $h_{i-1},h_i$) in \eqref{eq:canon}. 

Since, by assumption, $\mathcal{L}$ either is non-degenerate or satisfies the condition (ii) from Theorem \ref {thm:stab:help}, we conclude that
$$
J_i\circ F\circ{\Phi'}^{-1}=h_i, \quad r+1\le i\le n,
$$
after changing $\Phi'$ if necessarily.
In particular, $\mathcal{O}$ is non-degenerate and has Williamson type $(k_e,k_h,k_f)$ and rank $r'=r$, moreover $J\circ F$ is a Vey momentum map at $\mathcal{O}$.

{\em Case 2:} $\mathcal{O}$ is noncompact. Thus its closure $\overline{\mathcal{O}}$ contains a compact orbit $\mathcal{O}_1\subset \mathcal{L}$ (because $\mathcal{L}$ is almost non-degenerate).
By Case 1, $\mathcal{O}_1$ is non-degenerate of rank $r$ and Williamson type $(k_e,k_h,k_f)$, moreover $\mathcal{O}_1$ lies in each $K_i$, $r+1\le i\le n$, and there exists a real-analytic symplectomorphism $\Phi_1:(U(\mathcal{O}_1),\omega)\to(V/\Gamma_1,\omega_{can})$ such that 
$J_i\circ F\circ\Phi_1^{-1}=h_i$, $1\le i\le n$.

Since $\mathcal{O}_1$ is non-degenerate and $\mathcal{O}_1\subset\overline{\mathcal{O}}$, we conclude that $\mathcal{O}$ is non-degenerate too, moreover (by Remark \ref {rem:a:b}) it has Williamson type $(k_e,k_h-a,k_f-b)$ and is diffeomorphic to $\mathbb{R}^{a+b}\times(S^1)^{r+b}$, for some $a,b\in\mathbb{Z}_+$. Since $\mathcal{O}_1$ lies in each $K_i$, $r+1\le i\le n$, it follows that $k_h'=k_h-a$ and $k_f'=k_f-b$. Thus $r'=r+a+2b$ and 
$$
r'+k_e+k_h'+2k_f'=r+k_e+k_h+2k_f=n,
$$
as required. We also obtain that the functions $J_i\circ F$, $1\le i\le r$, generate a locally-free $(S^1)^r$-action on $\mathcal{O}$.

It remains to show that $J\circ F$ is a Vey momentum map at $\mathcal{O}$ (Definition \ref {def:Vey:momentum}) w.r.t.\ $\omega$, and $\tilde J\circ\tilde F$ is a perturbed Vey momentum map near $\mathcal{O}$ (Definition \ref {def:Vey:momentum:}) w.r.t.\ $\tilde\omega$. 
On one hand, by (a) and (b), $\mathcal{O}$ is fixed under the Hamiltonian $(S^1)^{n-r'}$-action on $U(\mathcal{O})^\mathbb{C}$. 
On the other hand, as we showed above, $\mathcal{O}$ is contained in $k_e+k_h'+k_f'$ subsets $\mathbb{K}_i^\mathbb{C}$, $i\in\{r+1,\dots,r+k_e\}\cup\{\ell_j\}_{j=1}^{k_h'}$ (resp.\ $i\in\{\ell_j\}_{j=k_h'+1}^{k_h'+k_f'}$), see \eqref{eq:*}, \eqref{eq:**}, each of which is a fixed point set of the corresponding $S^1$-subaction (resp.\ $(S^1)^2$-subaction) of the $(S^1)^{n-r'}$-action on $U(\mathcal{O})^\mathbb{C}$, and the type of this subaction is given by $h_i$ (resp.\ $h_{i-1},h_i$) in \eqref{eq:canon}. 

But, by the assumption, $\mathcal{L}$ is non-degenerate or satisfies the condition (ii) from Theorem \ref {thm:stab:help}, therefore the $k_e+k_h'+k_f'$ symplectic submanifolds $\mathbb{K}_i\cap U(\mathcal{O})$ are pairwise transversal and have symplectic pairwise intersections at $m'\in\mathcal{O}$. It follows from 
Lemma \ref {lem:period} (a) that there exists a real-analytic symplectomorphism $\Phi':(U(m'),\omega)\to(\mathbb{R}^{2n},\omega_{can})$ such that the $n-r'=k_e+k_h'+2k_f'$ functions 
$$
\begin{array}{l}
J_{r+i}\circ F\circ{\Phi'}^{-1}, \quad 1\le i\le k_e,\qquad\qquad
J_{\ell_j}\circ F\circ{\Phi'}^{-1}, \quad 1\le j\le k_h', \\
J_{\ell_j-1}\circ F\circ{\Phi'}^{-1},\ J_{\ell_j}\circ F\circ{\Phi'}^{-1}, \qquad k_h'+1\le j\le k_h'+k_f',
\end{array}
$$
coincide with the quadratic functions $h_i$ of elliptic, hyperbolic and focus-focus types, resp., while the remaining $r'$ functions $J_{i}\circ F\circ{\Phi'}^{-1}$ (whose differentials are automatically linearly independent at $m'$, since $r'=\operatorname{rank}\mathrm{d}(J\circ F)(m')$) are linear functions $\lambda_1,\dots,\lambda_{r'}$.

Thus $\mathcal{O}$ is non-degenerate of Williamson type $(k_e,k_h',k_f')$ and rank $r'$, and the map $J\circ F$ is a Vey momentum map at $m'$.
In fact, we have even more: it is a Vey momentum map at $\mathcal{O}$ (Definition \ref {def:Vey:momentum}), since, due to (a), the remaining $r'=r+k_h-k_h'+2k_f-2k_f'$
functions (namely, the functions $J_s\circ F$, $1\le s\le r$, and the remaining $k_h-k_h'$ hyperbolic functions $J_j\circ F$ and $k_f-k_f'$ focus-focus pairs of functions $J_{j-1}\circ F, J_j\circ F$) generate an $\mathbb{R}^{k_h-k_h'+k_f-k_f'}\times(S^1)^{r+k_f-k_f'}$-action near $\mathcal{L}$, which is locally-free near $\mathcal{O}$, and we can use this action for extending the local symplectomorphism $\Phi'$ to a neighbourhood of the cylinder $\mathcal{O}\approx \mathbb{R}^{k_h-k_h'+k_f-k_f'}\times(S^1)^{r+k_f-k_f'}$, as in the proof of Theorem \ref {thm:2} (b).

Due to (a) and (b), the $n-r'=k_e+k_h'+2k_f'$ perturbed functions 
$\tilde J_{r+i}\circ\tilde F$, $1\le i\le k_e$,
$i\tilde J_{\ell_j}\circ\tilde F$, $1\le j\le k_h'$, and 
$i\tilde J_{\ell_j-1}\circ\tilde F,\tilde J_{\ell_j}\circ\tilde F$, $k_h'+1\le j\le k_h'+k_f'$, 
generate a ``perturbed'' Hamiltonian $(S^1)^{n-r'}$-action near $\mathcal{O}$. 
Since the map $\tilde J\circ\tilde F$ is close to $J\circ F$, which is a Vey momentum map at $\mathcal{O}$ by above, it follows from 
Lemma \ref {lem:period} (b) that $\tilde J\circ\tilde F$ is a perturbed Vey momentum map near $m'$ (Definition \ref {def:Vey:momentum:}). In fact, we have even more: it is a perturbed Vey momentum map near $\mathcal{O}$, since we can extend the corresponding local symplectomorphism $\tilde\Phi'$ to a neighbourhood of the cylinder $\mathcal{O}\approx \mathbb{R}^{k_h-k_h'+k_f-k_f'}\times(S^1)^{r+k_f-k_f'}$ using the perturbed locally-free $\mathbb{R}^{k_h-k_h'+k_f-k_f'}\times(S^1)^{r+k_f-k_f'}$-action generated by $\tilde J_s\circ\tilde F$, $1\le s\le r$, and the remaining $k_h-k_h'$ hyperbolic functions $\tilde J_j\circ\tilde F$ and $k_f-k_f'$ focus-focus pairs of functions $\tilde J_{j-1}\circ\tilde F, \tilde J_j\circ\tilde F$.
\end{proof}

\begin{proof}[Proof of Corollary \ref {cor:stab:help}]
We have to prove the equivalence of four conditions. It follows from Theorem \ref {thm:stab:help} that all of these conditions except for the last one are pairwise equivalent, and the last one implies the previous ones. Moreover the last one follows from the previous one, provided that (iii) implies (v). It is left to note that the latter implication is the Zung topological classification \cite[Theorem 7.3]{zung96a}.
\end{proof}

\section{Local normal form and its rigidity} \label {sec:app}

Here we give a proof of Theorem \ref{thm:2} using the following lemma, which we also use (in Sec.~\ref {sec:rank0} and App.~\ref {sec:proof:}) in the proofs of Theorems \ref {thm:stab:}, \ref {thm:stab:help} and Principle Lemma \ref {lem:3}.

\begin{Lemma} \label {lem:period}
Suppose $m_0\in M$ is a singular rank-$r$ point of a real-analytic integrable system $(M,\omega,F)$.
Suppose the first differentials of the functions $f_{r+1},\dots,f_n$ at $m_0$ vanish and, in some canonical chart $\Phi_0:(U_0,\omega)\hookrightarrow(\mathbb{R}^{2n},\omega_{can})$ with $\Phi_0(m_0)=0$, the second differentials of $f_{r+1}\circ\Phi_0^{-1},\dots,f_n\circ\Phi_0^{-1}$ 
at $0$ coincide with the second differentials of $h_{r+1},\dots,h_n$ in \eqref{eq:eli}, and $\mathrm{d}(f_s\circ\Phi_0^{-1})(0)=\mathrm{d}\lambda_s$ for $1\le s\le r$. Then

{\rm(a)} There exist a neighbourhood $U$ of $m_0$ in $M^\mathbb{C}$, a neighbourhood $W\supseteq F^\mathbb{C}(U)$ of $F(m_0)$ in ${\mathbb C}^n$, and a unique Hamiltonian $(S^1)^{n-r}$-action on $U$ generated by the functions 
\begin{equation} \label {eq:X:}
\begin{array}{c}
J_{r+1}\circ F,\dots,J_{r+k_e}\circ F, \quad
iJ_{r+k_e+1}\circ F,\dots,iJ_{r+k_e+k_h}\circ F, \\
iJ_{r+k_e+k_h+1}\circ F,J_{r+k_e+k_h+2}\circ F,\quad \dots,\quad 
iJ_{n-1}\circ F,J_n\circ F,
\end{array}
\end{equation}
for some real-analytic functions $J_j:W\to\mathbb{C}$ with $J_j(z)=z_j+O(|z|^2)$ as $z\to0$, $r+1\le j\le n$. There exists a real-analytic symplectomorphism $\Phi:(U\cap M,\omega)\hookrightarrow(\mathbb{R}^{2n},\omega_{can})$ such that $\Phi(m_0)=0$, $\mathrm{d}\Phi(m_0)=\mathrm{d}\Phi_0(m_0)$, and $J\circ F\circ\Phi^{-1}=(h_1,\dots,h_n)$, where $J_s(z):=z_s$ for $1\le s\le r$, $J=(J_1,\dots,J_n):W\to\mathbb{C}^n$. 
For a fixed $J$, any two such symplectomorphisms $\Phi,\Phi'$ near $m_0$ are related by $(\Phi^{-1}\circ\Phi')^2=\phi_{S\circ J\circ F}^1$, for some real-analytic function $S=S(z_1,\dots,z_n)$, where $\phi_f^t$ denotes the Hamiltonian flow generated by the function $f$.

{\rm(b)} The above $(S^1)^{n-r}$-action and its normalization are persistent and rigid (resp.) under real-analytic integrable perturbations in the following sense.
Suppose we are given a neighbourhood $U_1$ of $m_0$ in $M^{\mathbb C}$ and a neighbourhood $W_1$ of the origin in ${\mathbb C}^n$ having compact closures $\overline{U_1}\subset U$ and $\overline{W_1}\subset W$, and an integer $k\in{\mathbb Z}_+$.
Then there exists $\varepsilon>0$ such that, for any (``perturbed'') real-analytic integrable Hamiltonian system $(U\cap M,\tilde\omega,\tilde F)$ whose holomorphic extension to $U$ is $\varepsilon-$close to $(U,\omega^{\mathbb C},F^{\mathbb C})$ in $C^0$-norm, the following properties hold. On some neighbourhood $\tilde U\supseteq U_1$, there exists a unique $\tilde F^{\mathbb C}$-preserving Hamiltonian (w.r.t.\ the ``perturbed'' symplectic structure $\tilde\omega$) $(S^1)^{n-r}$-action generated by functions 
\begin{equation} \label {eq:X::}
\begin{array}{c}
\tilde J_{r+1}\circ\tilde F,\dots,\tilde J_{r+k_e}\circ\tilde F, \quad
i\tilde J_{r+k_e+1}\circ \tilde F,\dots,i\tilde J_{r+k_e+k_h}\circ \tilde F, \\
i\tilde J_{r+k_e+k_h+1}\circ \tilde F,\tilde J_{r+k_e+k_h+2}\circ \tilde F,\quad \dots,\quad
i\tilde J_{n-1}\circ \tilde F,\tilde J_n\circ \tilde F,
\end{array}
\end{equation}
where $\tilde J_j(z_1,\dots,z_n)$, $r+1\le j\le n$, are real-analytic functions on some neighbourhood $\tilde W\supseteq W_1$ that are $O(\varepsilon)-$close to $J_j(z_1,\dots,z_n)$ in $C^k$-norm.
There exists a real-analytic symplectomorphism $\tilde\Phi:(\tilde U\cap M,\tilde\omega)\hookrightarrow(\mathbb{R}^{2n},\omega_{can})$ whose holomorphic extension to $\tilde U$ is $O(\varepsilon)-$close to $\Phi^{\mathbb C}$ in $C^k$-norm such that $\tilde J\circ\tilde F\circ\tilde\Phi^{-1}=(h_1,\dots,h_n)$, where 
 $\tilde J_s(z)=z_s$ for $1\le s\le r$, $\tilde J=(\tilde J_1,\dots,\tilde J_n):\tilde W\to\mathbb{R}^n$.
If the system depends on a local parameter (i.e.\ we have a local family of systems), 
moreover its holomorphic extension to $M^{\mathbb C}$ depends smoothly (resp., analytically) on that parameter, then
$J$ and $\Phi$ can also be chosen to depend smoothly (resp., analytically) on that parameter.
\end{Lemma}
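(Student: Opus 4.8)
For part (a), the existence of a normalising pair $(\Phi,J)$ for any single real-analytic system is exactly Theorem \ref{thm:2}(a) (\cite{vey}, \cite{kud:toric}); applied to the perturbed system it already supplies \emph{some} $(\tilde\Phi,\tilde J)$. The genuinely new content is therefore twofold: the complexified $(S^1)^{n-r}$-action interpretation together with the uniqueness clause in (a), and the \emph{uniform, quantitative} persistence with parameter dependence in (b). My plan is to build the torus action first, read off $(\Phi,J)$ and its ambiguity from it, and then bootstrap the same scheme quantitatively to obtain (b) via rigidity of the action.

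For (a) I would complexify, extending $\omega,F$ holomorphically to a neighbourhood $U$ of $m_0$ in $M^{\mathbb C}$. At the level of the quadratic forms \eqref{eq:eli}, each generator listed in \eqref{eq:X:} has a $2\pi$-periodic Hamiltonian flow after complexification: $\tfrac12(x^2+y^2)$ rotates the elliptic plane, the hyperbolic flow $x\mapsto e^tx,\ y\mapsto e^{-t}y$ becomes an $e^{it}$-rotation once multiplied by $i$, and the $i$-twisted focus-focus combination generates an $(S^1)^2$. Conjugating $F$ to this quadratic form by Vey's theorem shows the \emph{actual} flows are $2\pi$-periodic, so their time-$2\pi$ maps equal the identity on the Vey chart; being holomorphic, they equal the identity on all of $U$ by uniqueness of analytic continuation, which yields the $(S^1)^{n-r}$-action. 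The $J_j$ are the action coordinates, pinned uniquely by $J_j(z)=z_j+O(|z|^2)$, and uniqueness of the action follows since its germ at the fixed point $m_0$ is determined by its linearisation, the fixed maximal torus $\langle A_1,\dots,A_n\rangle$. An equivariant Moser/Darboux step (as in \cite{eli}, \cite{kud:toric}) then brings $\omega$ to $\omega_{can}$ while preserving $J\circ F$, with the condition $\mathrm{d}\Phi(m_0)=\mathrm{d}\Phi_0(m_0)$ removing the residual linear freedom. For the uniqueness clause, two such normalisations differ by a symplectomorphism of $(\mathbb{R}^{2n},\omega_{can})$ preserving every $h_j$; the germ of such a map at the origin is a flow $\phi^1_{S\circ J\circ F}$ with $S$ a function of the momenta, composed with an element of the order-two twisting group acting by $\pm1$ on the hyperbolic factors, so squaring kills the discrete part and gives $(\Phi^{-1}\circ\Phi')^2=\phi^1_{S\circ J\circ F}$.

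For (b) the core is the rigidity of the torus action. Since a $C^0$-$\varepsilon$-perturbation of $(\omega^{\mathbb C},F^{\mathbb C})$ on $U$ is, by Cauchy estimates, an $O(\varepsilon)$-$C^k$-perturbation on the smaller $U_1$, the perturbed quadratic parts at the (perturbed) fixed point are $O(\varepsilon)$-close to the originals, and hence the time-$2\pi$ maps of the candidate generators \eqref{eq:X::} are $O(\varepsilon)$-close to the identity near $m_0$. I would then straighten this nearly-periodic system to an \emph{exact} $(S^1)^{n-r}$-action by averaging over the approximate circle flows; because averaging over a compact group is a bounded operation, this is small-divisor-free and the straightening conjugation is $O(\varepsilon)$-small. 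A second equivariant Moser homotopy between $\tilde\omega$ and $\omega_{can}$ then produces $\tilde J=J+O(\varepsilon)$ and $\tilde\Phi=\Phi+O(\varepsilon)$ in $C^k$ on the prescribed subdomains, with smooth (resp.\ analytic) dependence on any parameter inherited from the averaging and the Moser path. This reproduces, near $m_0$, the persistence and rigidity already used for compact orbits in Theorem \ref{thm:stab:} and \cite{zung:mir04}.

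The step I expect to be the main obstacle is the uniform domain control in (b): producing a single $\tilde U\supseteq U_1$ on which the perturbed action is exact, with $C^k$ bounds independent of the particular perturbation. The difficulty is concentrated in the hyperbolic and focus-focus directions, where the circle acts only on the complexification and its flow can push points out of a fixed real neighbourhood; this is precisely why the statement distinguishes $U_1$, $U$ and $\tilde U$ and measures the data in $C^0$ but the output in $C^k$. Carrying out the averaging and straightening uniformly, without shrinking past $U_1$ and while keeping the complexified flows inside a controlled domain, is the technical heart of the proof.
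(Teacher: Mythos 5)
Your route through part (a) is partly viable but diverges from the paper, and it matters where. For the existence of $(\Phi,J)$ you invoke Theorem \ref{thm:2}(a); note that in this paper Theorem \ref{thm:2} is itself \emph{deduced} from Lemma \ref{lem:period}, so you must lean on \cite{vey} directly (as you also cite) to avoid circularity. The paper instead proves (a) from scratch by a rescaling trick ($x=\varepsilon x'$, $y=\varepsilon y'$): the local system becomes an $O(\varepsilon)$-perturbation of its linearization, and the torus action and its linearization are then obtained from the perturbative ``hidden toric symmetry'' results of \cite{kud:toric} (Theorem 2.2, Theorem 3.10 or Lemma 6.2), so that parts (a) and (b) are two instances of one mechanism. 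Beyond this, your uniqueness claims in (a) are asserted rather than proved: a torus-action germ is \emph{not} ``determined by its linearisation'' (it is only conjugate to it — uniqueness of the generators really comes from discreteness of the period lattice together with the normalisation $J_j(z)=z_j+O(|z|^2)$), and the claim that every germ of a symplectomorphism preserving all the $h_j$ is a flow $\phi^1_{S\circ J\circ F}$ composed with a twisting involution is precisely the nontrivial content of the uniqueness clause, not something one may quote. The paper proves it by passing to a regular point $m'$ of the \emph{complexified} singular fibre, writing the map near $m'$ in adapted Darboux coordinates as a shear $(\lambda,\mu)\mapsto(\lambda,\mu+\partial S/\partial\lambda)$, hence as a time-one Hamiltonian flow there, and then propagating the identity $\Psi=\phi^1_{S\circ J\circ F}$ to all of $U$ by uniqueness of analytic continuation; some argument of this kind is indispensable and is missing from your text.

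The serious gap is in (b). Averaging over a compact group (Grove--Karcher-type arguments) can deform an ``almost action'' into a nearby genuine $(S^1)^{n-r}$-action, but it gives no control over the relation between that action and the perturbed momentum map: nothing in the averaging produces generators of the required form $\tilde J_j\circ\tilde F$ with $\tilde J_j$ analytic on a fixed $\tilde W\supseteq W_1$, nor does it make the resulting action $\tilde F$-preserving. The honest starting flows $\phi^t_{iJ_j\circ\tilde F}$ do preserve the fibres of $\tilde F$ but are not $2\pi$-periodic; correcting $J_j$ to $\tilde J_j$ so that the flow closes up amounts to showing that certain period (action) functions, defined a priori only over regular values and prone to logarithmic singularities along the perturbed bifurcation set in the hyperbolic and focus-focus directions, extend to single-valued analytic functions across it. That extension is exactly the content of the perturbative hidden-torus-symmetry theorem, and it is not small-divisor-free bookkeeping — it is the theorem. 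The paper does not attempt to reprove it: after extending $\tilde f_1,\dots,\tilde f_r$ to perturbed Darboux coordinates and reducing to an $r$-parameter family of systems with $n-r$ degrees of freedom, it derives (b), including the $C^0\Rightarrow C^k$ closeness on the smaller domains and the smooth or analytic parameter dependence, directly from the perturbative statements [Theorems 2.2(b), 3.10(b) or Lemma 6.2(b)] of \cite{kud:toric}. Without importing such a result (or reproving it), your outline of (b) — which you correctly identify as the heart of the lemma — does not close.
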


\begin{proof}
(a) We divide the proof into two steps.

{\em Step 1.} We can extend the functions $f_1,\dots,f_r$ to a system of local canonical coordinates $\Phi=(\lambda,\varphi,x,y)=(\lambda_1,\varphi_1,\dots,\lambda_r,\varphi_r,x_1,y_1,\dots,x_{n-r},y_{n-r}):U\hookrightarrow\mathbb{R}^{2n}$ 
on a small neighbourhood $U$ of $m_0$
such that $f_s=\lambda_s$ for $1\le s\le r$, $\Phi(m_0)=0$, $\mathrm{d}\Phi(m_0)=\mathrm{d}\Phi(m_0)$ and $\omega|_U=\Phi^*\omega_{can}$ (Darboux coordinates), see \eqref {eq:eli:}.
Consider two cases.

{\em Case 1:} $r=0$.
Consider the 1-parameter family of ``rescaling'' coordinate systems $\Phi_\varepsilon=(x',y')$ such that $x=\varepsilon x'$, $y=\varepsilon y'$, where $\varepsilon>0$ is a small parameter. Without loss of generality, we can and will assume that $F(m_0)=0$. By Hadamard's lemma, 
$$
f_j\circ\Phi_\varepsilon^{-1}(x',y')=f_j\circ\Phi^{-1}(\varepsilon x',\varepsilon y')=\varepsilon^2 f'_j(x',y',\varepsilon)
$$
for some real-analytic functions $f'_j(x',y',\varepsilon)$.
Clearly, $\omega=\varepsilon^2\Phi_\varepsilon^*(\mathrm{d} x'\wedge\mathrm{d} y')$ where $\mathrm{d} x'\wedge\mathrm{d} y':=\sum\limits_{j=1}^n\mathrm{d} x_j'\wedge\mathrm{d} y_j'$.
Choose a small $\varepsilon_0>0$ such that $\Phi(U)\supseteq B_{0,\varepsilon_0}:=\{w\in\mathbb{R}^{2n}\mid|w|<\varepsilon_0\}$. Denote $U_\varepsilon:=\Phi^{-1}(B_{0,\varepsilon})$ for $0<\varepsilon\le\varepsilon_0$.
Thus the rescaling diffeomorphism $\Phi_\varepsilon:U_\varepsilon\to B_{0,1}$ transforms the integrable Hamiltonian system
\begin{equation} \label {eq:scale:}
(U_\varepsilon,\varepsilon^{-2}\omega,\varepsilon^{-2}F)
\end{equation}
to the integrable system
\begin{equation} \label {eq:scale}
(B_{0,1},\mathrm{d} x'\wedge\mathrm{d} y',F'), 
\end{equation}
where $F\circ\Phi_\varepsilon^{-1}=\varepsilon^2F'(x',y',\varepsilon)$, 
$F'(x',y',\varepsilon):=(f_1'(x',y',\varepsilon),\dots,f_n'(x',y',\varepsilon))$.

Observe that the ``unperturbed'' system (i.e.\ \eqref {eq:scale} with $\varepsilon=0$)
\begin{equation} \label {eq:unperturb}
(B_{0,1},\mathrm{d} x'\wedge\mathrm{d} y',F'|_{\varepsilon=0}=(h_1,\dots,h_n))
\end{equation}
coincides with the linearization of the original system at $m_0$, which has the canonical form by assumption of the lemma.
Thus, on a small neighbourhood $U=U_\varepsilon$ of $m_0$, our system \eqref{eq:scale:} can be viewed as a system \eqref{eq:scale} obtained from the (canonical) ``unperturbed'' system \eqref {eq:unperturb} by $O(\varepsilon)$-small integrable perturbation.
Using this and \cite[Lemma 2.3]{kud:toric}, one can show that, for each $S^1$-subaction of the above $(S^1)^{n}$-action on $(T_{m_0}M)^\mathbb{C}$ (see Step 1), there exists a point $m_1\subset \mathcal{L}^\mathbb{C}$ satisfying the conditions (i)--(iii) of \cite[Theorem 2.2(a)]{kud:toric}.
By \cite[Theorem 2.2(a)]{kud:toric}, on a small open complexification $U^\mathbb{C}$ of $U=U_\varepsilon$, there exists a $F$-preserving Hamiltonian $(S^1)^{n}$-action generated by some functions \eqref {eq:X:} where $J_j=J_j(z_1,\dots,z_n)$ are real-analytic functions such that $J_j(z_1,\dots,z_n)=z_j+O(|z|^2)$, $1\le j\le n$.\footnote{Indeed: $J(F(\Phi^{-1}(x,y)))=J(F(\Phi_\varepsilon^{-1}(x',y')))=J(\varepsilon^2F'(x',y',\varepsilon))=\varepsilon^2J'(F'(x',y',\varepsilon),\varepsilon)$ for some real-analytic map $J'(z,\varepsilon)$ such that $J(\varepsilon^2z)=\varepsilon^2J'(z,\varepsilon)$.
Hence the rescaling diffeomorphism $\Phi_\varepsilon^\mathbb{C}$ conjugates the $(S^1)^n$-action $(U^\mathbb{C},\varepsilon^{-2}\omega^\mathbb{C},\varepsilon^{-2}J^\mathbb{C}\circ F^\mathbb{C})$ with the $(S^1)^n$-action $(B_{0,1}^\mathbb{C},(\mathrm{d} x'\wedge\mathrm{d} y')^\mathbb{C},{J'}^\mathbb{C}\circ {F'}^\mathbb{C})$. Hence the linearization of $X_{J_j\circ F}$ at $m_0$ is $\mathrm{d}\Phi_\varepsilon(m_0)$-conjugated with the linearization of $X_{J_j'\circ F'}$ at $0$ for any $\varepsilon>0$. 
But the quadratic part of $F'$ at $x'=y'=0$ does not depend on $\varepsilon$ and coincides with $F'|_{\varepsilon=0}=(h_1,\dots,h_n)$, see \eqref{eq:unperturb}.
This implies that $J_j'(z_1,\dots,z_n)=z_j+O(|z|^2)$ and, hence, $J_j(z_1,\dots,z_n)=z_j+O(|z|^2)$, $1\le j\le n$.}

By \cite[Theorem 3.10(a) or Lemma 6.2(a)]{kud:toric}, the latter $(S^1)^n$-action is linearizable at $m_0$ (\cite[Def.~3.1, 3.7]{kud:toric}). In other words, there exists a real-analytic symplectomorphism $\hat\Phi:(U,\omega)\hookrightarrow(\mathbb{R}^{2n},\omega_{can})$ 
sending the point $m_0$ to the origin, with $\mathrm{d}\hat\Phi(m_0)=\mathrm{d}\Phi(m_0)$, and transforming the momentum map $J\circ F$ to a collection of quadratic functions on $V=\Phi(\hat U)$, which does not depend on $\varepsilon$ and, hence, coincides with $(h_1,\dots,h_n)$ from \eqref {eq:unperturb}. 
Thus $J$ and $\hat\Phi$ have the required properties.

{\em Case 2:} $r>0$. One performs a local Hamiltonian reduction and reduces the problem to an $r$-parameter family of integrable systems with $n-r$ degrees of freedom, with a non-degenerate rank-$0$ point $m_0$. This can be done by the same arguments as in the case of a compact orbit $\mathcal{O}$ (see \cite[Sec.~4]{zung:mir04} or \cite[Sec.~7]{kud:toric}).

In detail: on a small neighbourhood $U_0$ of $m_0$, we can extend the functions $f_1,\dots,f_r$ to a system of local canonical coordinates 
$$
\Phi_0=(\lambda,\varphi,x,y)=(\lambda_1,\varphi_1,\dots,\lambda_r,\varphi_r,x_1,y_1,\dots,x_{n-r},y_{n-r}):U_0\hookrightarrow\mathbb{R}^{2n}
$$
such that $f_s=\lambda_s$ for $1\le s\le r$, $\Phi_0(m_0)=0$, and $\omega|_U=\Phi^*\omega_{can}$ (Darboux coordinates), see \eqref {eq:eli:}.
Take a local disk $P=\{\varphi=0\}$ of dimension $2n-r$ that intersects the local orbit $\mathcal{O}$ through $m_0$ transversally at $m_0$. 
Then the local disk $\{\lambda_1=\mathrm{const},\dots,\lambda_r=\mathrm{const}\}\cap P$ near $m_0$ has an induced symplectic structure and induced functions $f_{r+1},\dots,f_{n}$ that pairwise Poisson commute.

Applying the case of a rank-$0$ point and parameters $\lambda_1,\dots,\lambda_r$, which is a parametric extension of Case 1 (such an extension is valid due to the parametric extensions 
\cite[Theorems 2.2(b) and 3.10(b) or Lemma 6.2(b)]{kud:toric} of 
\cite[Theorems 2.2(a) and 3.10(a) or Lemma 6.2(a)]{kud:toric}), we can define 
an $F$-preserving Hamiltonian $(S^1)^{n-r}$-action on $P^\mathbb{C}$ 
and local functions $\hat x_1,\hat y_1,\dots,\hat x_{n-r},\hat y_{n-r}$ on $P$, such that they form a local symplectic coordinate system on each local disk $\{\lambda_1=\mathrm{const},\dots,\lambda_r=\mathrm{const}\}\cap P$, with respect to which the Hamiltonian $(S^1)^{n-r}$-action is linear and does not depend on the values of $\lambda_1,\dots,\lambda_r$. Moreover we have $\hat x(m_0)=\hat y(m_0)=0$, the local coordinates $(\hat x,\hat y)$ on the local disk $\{\lambda=0\}\cap P$ have the same linearization at $m_0$ as $(x,y)$.
We extend $\hat x_1,\hat y_1,\dots,\hat x_{n-r},\hat y_{n-r}$ to functions on $U$ by making them invariant under the local Hamiltonian flows of $X_{f_1},\dots,X_{f_r}$.

Since $\mathrm{d}\omega=0$, it follows \cite[Lemma 4.2]{zung:mir04} that the symplectic structure $\omega$ on $U$ has the form
$$
\omega = \sum_{s=1}^r\mathrm{d} \lambda_s\wedge\mathrm{d}(\varphi_s + g_s) + \sum_{j=1}^{n-r} \mathrm{d}\hat x_j\wedge\mathrm{d}\hat y_j ,
$$
for some real-analytic functions $g_s$ on a neighbourhood of $m_0$ in $U$, that are invariant under the local Hamiltonian flows of $X_{f_1},\dots,X_{f_r}$.

Define $\hat\lambda_s:=\lambda_s=f_s$, $\hat\varphi_s:=\varphi_s+g_s$, and $J_s(z_1,\dots,z_n)=z_s$ for $1\le s\le r$. Then with respect to the coordinate system $\hat\Phi=(\hat\lambda,\hat\varphi,\hat x,\hat y)$ on $U$, the symplectic form $\omega$ on $U$ has the standard form and the Hamiltonian $(S^1)^{n-r}$-action on $P^\mathbb{C}$ is linear and does not depend on $\lambda$. This implies that $\omega=\hat\Phi^*\omega_{can}$ and the functions
$$
J_{r+1}\circ F\circ\hat\Phi^{-1},\dots,J_{r+k_e}\circ F\circ\hat\Phi^{-1}, \
iJ_{r+k_e+1}\circ F\circ\hat\Phi^{-1},\dots,iJ_{r+k_e+k_h}\circ F\circ\hat\Phi^{-1},
$$
$$
J_{r+k_e+k_h+1}\circ F\circ\hat\Phi^{-1},iJ_{r+k_e+k_h+2}\circ F\circ\hat\Phi^{-1},\dots, J_{n-1}\circ F\circ\hat\Phi^{-1},iJ_{n}\circ F\circ\hat\Phi^{-1}
$$
generating this linear Hamiltonian $(S^1)^{n-r}$-action are quadratic functions in $x_j,y_j$ and do not depend on $\lambda$. 
Clearly, the functions
$$
J_{1}\circ F\circ\hat\Phi^{-1},\dots,J_{n}\circ F\circ\hat\Phi^{-1}
$$
have the canonical form \eqref{eq:eli}, and by construction
$$
J_s(z_1,\dots,z_n)=z_s \quad \mbox{for } 1\le s\le r, \qquad
J_s(z_1,\dots,z_n)=z_s+O(|z|^2) \quad \mbox{for } r+1\le s\le n.
$$

{\em Step 2.} It remains to prove the last assertion of (a). We will prove it for $r=0$ (the case $r>0$ can be reduced to the case $r=0$ by a local Hamiltonian reduction, as in Step 1).

Suppose $\Phi,\Phi'$ are two local symplectomorphisms at $m_0$ bringing $J\circ F$ to the canonical form. Then $\Psi:=(\Phi^{-1}\circ\Phi')^2$ is a $F$-preserving real-analytic symplectomorphism of a neighbourhood of $m_0$ to $M$ fixing $m_0$ and being homotopic to the identity in the space of $F$-preserving homeomorphisms. 
Take a regular point $m_1\in \mathcal{L}^\mathbb{C}$ close to $m_0$. 
Consider the rescaling diffeomorphism $\Phi_\varepsilon:U_\varepsilon\to B_{0,1}$ from Step 1. By Hadamard's lemma, the map $\Phi\circ\Psi\circ\Phi_\varepsilon^{-1}:B_{0,1}\to B_{0,1}$ has the form $\varepsilon\Psi_\varepsilon$, where $\Psi_\varepsilon:B_{0,1}\to B_{0,1}$ is a 1-parameter family of real-analytic maps in $x',y',\varepsilon$. Clearly, $\Psi_\varepsilon$ preserves $J'(F'(x',y',\varepsilon),\varepsilon)$ and $\mathrm{d} x'\wedge\mathrm{d} y'$. One checks that the unperturbed map $\Psi_0$ is linear and coincides with $\mathrm{d}(\Phi\circ\Psi\circ\Phi^{-1})(m_0)$.
Take a point $m'=(x',y')\in B_{0,1}^\mathbb{C}$ which is a regular point of the singular fiber of the unperturbed system \eqref{eq:unperturb}. Without loss of generality, $m'$ is fixed under the unperturbed linear map $\Psi_0$ (this can be achieved by replacing $\Psi$ with its composition with the time-$1$ map of the Hamiltonian flow generated by a linear combination of $J_j\circ F$, $1\le j\le n$). We can extend the functions $\lambda_j={J_j'}^\mathbb{C}\circ{F'}^\mathbb{C}$ to a local system of canonical holomorphic coordinates $\lambda_j,\mu_j$ near $m'$ (Darboux coordinates) depending analytically on $\varepsilon$ such that $\mu_j(m')=0$. Since $m'$ is fixed under the unperturbed map $\Psi_0$, it follows that, in these coordinates the perturbed map $\Psi_\varepsilon^\mathbb{C}$ on a neighbourhood $\tilde U(m')^\mathbb{C}$ of the point $m'$ has the form $(\lambda,\mu)\mapsto(\lambda,\mu+\frac{\partial S_\varepsilon^\mathbb{C}}{\partial\lambda})$, for some real-analytic function $S_\varepsilon=S_\varepsilon(\lambda)$ on a neighbourhood of the origin such that $S_0(0)=0$ and $\frac{\partial S_0(0)}{\partial\lambda_j}=0$. Thus, on $\tilde U(m')^\mathbb{C}$, the map $\Psi_\varepsilon^\mathbb{C}$ coincides with the time-1 map of the Hamiltonian flow generated by $S_\varepsilon^\mathbb{C}({J_j'}^\mathbb{C}({F'}^\mathbb{C}(x',y',\varepsilon),\varepsilon))$.
Choose a point $m_\varepsilon'=(x_\varepsilon',y_\varepsilon')\in\tilde U(m')^\mathbb{C}$ with $\lambda_j(m')=\mu_j(m')=0$.
Thus, $\Psi^\mathbb{C}$ coincides with the time-1 map of the Hamiltonian flow generated by $S\circ J\circ F$, where $S(z)=\varepsilon^2S_\varepsilon(z/\varepsilon^{2})$ on a small neighbourhood of the point $m_\varepsilon:=\Phi^{-1}(\varepsilon m_\varepsilon')\in \mathcal{L}^\mathbb{C}$.
Since the analytic symplectomorphisms $\Psi^\mathbb{C}$ and $(\phi_{S\circ J\circ F}^1)^\mathbb{C}$ are well-defined on some neighbourhood $U$ of $m_0$ in $M^\mathbb{C}$ and coincide with each other on a neighbourhood of the path $m_u\in \mathcal{L}^\mathbb{C}\cap U$, $0\le u\le\varepsilon$, by analytic continuation they must coincide on the whole $U$.

This yields Lemma \ref {lem:period} (a).

(b) On a neighbourhood $\tilde U$ of $m_0$ close to $U$, we can extend the ``perturbed'' functions $\tilde f_1,\dots,\tilde f_r$ to a ``perturbed'' system of local canonical coordinates $\tilde \Phi=(\tilde \lambda,\tilde \varphi,\tilde x,\tilde y):\tilde U\hookrightarrow\mathbb{R}^{2n}$ such that $\tilde f_s=\tilde \lambda_s$ for $1\le s\le r$ and $\tilde\omega|_{\tilde U}=\tilde \Phi^*\omega_{can}$ (perturbed Darboux coordinates), see \eqref {eq:eli:}.

We obtain a ``perturbed'' $r$-parameter family of integrable systems with $n-r$ degrees of freedom, with parameters $\tilde\lambda_1,\dots,\tilde\lambda_r$. Since the ``unperturbed'' system with zero values of the parameters ($\lambda_1=\dots=\lambda_r=0$) admits a non-degenerate rank-$0$ point $m_0$, we can derive the assertion (b) from Case 1 of (a) similarly to deriving Case 2 of (a), by applying to the ``perturbed'' system the ``perturbative'' extension 
\cite[Theorem 2.2(b) and 3.10(b) or Lemma 6.2(b)]{kud:toric} of 
\cite[Theorem 2.2(a) and 3.10(a) or Lemma 6.2(a)]{kud:toric}.

This yields Lemma \ref {lem:period} (b).
\end{proof}

\subsection {Proof of Theorem \ref {thm:2}}
(a) We want to bring our system to a canonical form \eqref{eq:eli}, \eqref{eq:eli:} on a small neighbourhood $U$ of the point $m_0$.
This can be done using \cite{vey}. Let us give another proof based on Lemma \ref {lem:period} (a) (which we proved using \cite{kud:toric}).

After replacing $f_1,\dots,f_n$ by their linear combinations, we can assume that 
$\mathrm{d} f_j(m_0)=0$ for each $j>r$. In particular, $\mathrm{d} f_1\wedge\dots\wedge\mathrm{d} f_r\ne0$ at $m_0$. Suppose also that $F(m_0)=0$ for $j>r$, which can be achieved by adding a constant to each $f_j$.

On a small neighbourhood $U$ of $m_0$, we can extend the functions $f_1,\dots,f_r$ to a system of local canonical coordinates $\Phi_0=(\lambda,\varphi,x,y)=(\lambda_1,\varphi_1,\dots,\lambda_r,\varphi_r,x_1,y_1,\dots,x_{n-r},y_{n-r}):U\hookrightarrow\mathbb{R}^{2n}$ such that $f_s=\lambda_s$ for $1\le s\le r$, $\Phi_0(m_0)=0$, and $\omega|_U=\Phi_0^*\omega_{can}$ (Darboux coordinates), see \eqref {eq:eli:}.

It follows from the Williamson theorem that (after replacing $f_{r+1},\dots,f_n$ by their linear combinations, and applying to $x,y$ a linear canonical transformation if necessary) the second differentials of $f_{r+1}\circ\Phi_0^{-1}|_{\lambda=0},\dots,f_n\circ\Phi_0^{-1}|_{\lambda=0}$ at the origin have a canonical form, i.e.\ coincide with the second differentials of $h_{r+1}|_{\lambda=0},\dots,h_n|_{\lambda=0}$ in \eqref{eq:eli}.
In particular, the linearizations at the point $m_0$ of the restrictions of the Hamiltonian vector fields generated by \eqref{eq:X:} to $\{\lambda=0\}$ have $2\pi$-periodic flows on $(T_{m_0}M)^\mathbb{C}$.

Due to Lemma \ref {lem:period} (a), there exist $J$ and $\Phi$ with required properties.

(b) Suppose $\mathcal{O}$ is a rank-$r$ orbit, $m_0\in\mathcal{O}$. Since the flows of all $X_{f_i}$ are complete on $\mathcal{O}$, it is diffeomorphic to a cylinder $\mathbb{R}^{r_o}\times(S^1)^{r_c}$, where $r_o$ and $r_c$ are {\em degree of openness} and {\em degree of closedness} of $\mathcal{O}$, respectively \cite[Def.~3.4]{zung96a}, $r=r_o+r_c$. 

By \cite{ito91} or \cite{zung02, zung05} (or \cite[Theorem 6.1]{zung96a} in the $C^\infty$ case with a proper $F$), there exists a locally-free $F$-preserving Hamiltonian $(S^1)^{r_c}$-action on a neighbourhood $U(\mathcal{O})$ of $\mathcal{O}$. 
This $(S^1)^{r_c}$-action is generated by functions of the form $J_{r_o+1}\circ F,\dots,J_r\circ F$ for some real-analytic functions $J_s(z_1,\dots,z_n)$, $r_o+1\le s\le r$.
Without loss of generality, $\partial(J_{r_o+1},\dots,J_r)/\partial(z_{r_o+1},\dots,z_r)\ne0$ and 
$\mathrm{d} f_1\wedge\dots\wedge\mathrm{d} f_r\ne0$ at some (and hence each) point of $\mathcal{O}$.
Without loss of generality, this $(S^1)^{r_c}$-action is effective.

Besides, we can extend to $U(\mathcal{O})^\mathbb{C}$ the Hamiltonian $(S^1)^{n-r}$-action on $U^\mathbb{C}$ generated by $J_j^\mathbb{C}\circ F^\mathbb{C}$, $r+1\le j\le n$, constructed in (a). 
The above $(S^1)^{r_c}$-action and $(S^1)^{n-r}$-action give rise to the Hamiltonian $(S^1)^{n-r_o}$-action on $U(\mathcal{O})^\mathbb{C}$ generated by $J_j^\mathbb{C}\circ F^\mathbb{C}$, $r_o+1\le j\le n$. Put $J_s(z_1,\dots,z_n):=z_s$, $1\le s\le r_o$. Consider two cases.

{\em Case 1:} $r_o=0$, thus the orbit $\mathcal{O}$ is compact. By \cite[Theorem 3.10(a)]{kud:toric}, the above $(S^1)^{n-r_o}$-action is symplectomorphic to a linear model, thus the system $(U(\mathcal{O}),\omega,J\circ F)$ is symplectomorphic to a linear model $(V/\Gamma,\omega_{can},(h_1,\dots,h_n))$ \cite[Def.~3.7]{kud:toric} having the form \eqref{eq:eli}, \eqref{eq:eli:}. In terminology of \cite[Def.~3.7]{kud:toric}, this means that the integer $(n-r)\times(n-r)$-matrix $\|p_{j\ell}\|$ (whose columns are ``extended'' elliptic and hyperbolic resonances of the singularity) is the unity matrix: $p_{j\ell}=\delta_{j\ell}$ (we can achieve this, since our matrix $\|p_{j\ell}\|$ is a non-degenerate square matrix, and we are allowed to replace the functions $J_j\circ F$ by their linear combinations forming a non-degenerate matrix). We can manage that the action of $\Gamma$ is trivial on each elliptic disk $D^2$ and on each focus-focus polydisk $D^2\times D^2$, because the twisting resonances are well-defined only up to adding any linear combinations of the ``extended'' elliptic resonances \cite[Remark 3.11(C)]{kud:toric}.
The action of $\Gamma$ on $(D^2)^{n-r}$ is effective, since otherwise the above $(S^1)^{r_c}$-action is non-effective.

{\em Case 2:} $r_o>0$. We deduce this case from a parametric version of Case 1 (similarly to the proof of Lemma \ref {lem:period} (a), Step 1, Case 2) by considering the corresponding reduced integrable Hamiltonian system with $n-r_o$ degrees of freedom (obtained by local symplectic reduction under the local Hamiltonian action generated by $f_1,\dots,f_{r_o}$). 
In this way, we see from Case 1 and \cite[Theorem 3.10(b)]{kud:toric} that the system $(U(\mathcal{O}),\omega,J\circ F)$ is symplectomorphic to a neighbourhood of the cylinder $\{0\}^r\times\mathbb{R}^{r_o}\times(S^1)^{r_c}\times\{(0,0)\}^{n-r}$ in the linear model $(V/\Gamma,\omega_{can},(h_1,\dots,h_n))$ having the form \eqref{eq:eli}, \eqref{eq:eli:}, as required.

This yields Theorem \ref {thm:2}. \qed

\smallskip
The authors are grateful to Alexey Bolsinov for helpful comments on a preliminary version of the paper and to Anton Izosimov for informing us about his results on structural stability of focus singularities. The work on semilocal singularities (Theorems \ref {thm:stab:help} and \ref {thm:stab}, Corollary \ref {cor:stab:help}, Sec.~\ref{sec:proof}, \ref {sec:kov} and App.~\ref {sec:proof:}) was supported by the Russian Science Foundation (grant No.~17-11-01303). 
The work on local singularities (Theorems \ref {thm:2} and \ref {thm:stab:}, Sec.~\ref{sec:rank0} and App.~\ref {sec:app}) was supported by the Russian Foundation for Basic Research (grant No.~19-01-00775-a).


\begin{thebibliography}{99}

\bibitem{bau:zung97} T.\ Bau and N.~T.\ Zung, 
``Singularities of integrable and near integrable Hamiltonian systems,'' 
Journal of Nonlinear Science {\bf 7}(1), 1--7 (1997).

\bibitem {BF}
A.~V.\ Bolsinov and A.~T.\ Fomenko, 
{\em Integrable Hamiltonian systems: geometry, topology, classification} 
(D.C.\ Chapman \& Hall/CRC, Boca Raton, London, N.Y., Washington, 2004). 
(Engl.\ transl.\ of Russian version: Udmurdskiy universitet, Izhevsk, 1999.)

\bibitem{bol91} A.~V.\ Bolsinov, 
``Methods of calculation of the Fomenko-Zieschang invariant,'' 
in {\em Topological classification of integrable systems}, ed.\ A.~T.\ Fomenko
(Amer.\ Math.\ Soc., Providence, RI, 1991), Adv.\ Sov.\ Math., Vol.~6, pp.~147--183.

\bibitem {bol:fom:ric} A.~V.\ Bolsinov, P.~H.\ Richter, and A.~T.\ Fomenko, 
``The method of loop molecules and the topology of the Kovalevskaya top,''
Sb.\ Math. {\bf 191}(2), 151--188 (2000). 

\bibitem {bol:izos} A.~V.\ Bolsinov and A.\ Izosimov,
``Smooth invariants of focus-focus singularities and obstructions to product decomposition,''
J.\ Sympl.\ Geom. {\bf 17}(6), 1613--1648 (2019).

\bibitem {bgk} A.~V.\ Bolsinov, L.\ Guglielmi, and E.~A.\ Kudryavtseva, 
``Symplectic invariants for parabolic orbits and cusp singularities of integrable systems,''
Philos.\ Trans.\ Roy.\ Soc.\ A {\bf 376}(2131), 20170424, 29 pp. (2018).

\bibitem {bol:osh06} A.~V.\ Bolsinov and A.~A.\ Oshemkov, 
``Singularities of integrable Hamiltonian systems,'' 
in {\em Topological Methods in the Theory of Integrable Systems}, eds.\ A.~V.\ Bolsinov, A.~T.\ Fomenko, and A.~A.\ Oshemkov
(Cambridge Scientific Publications, Cambridge, 2006), pp.~1--67.

\bibitem {bro93} H.~W.\ Broer, S.~N.\ Chow, Y.\ Kim, and G.\ Vegter, 
``A normally elliptic Hamiltonian bifurcation,''
Z.\ angew.\ Math.\ Phys. {\bf 44}, 389--432 (1993).

\bibitem {cdv:vey79} Y.\ Colin de Verdiere and J.\ Vey, 
``Le lemme de Morse isochore,''
Topology {\bf 18}(4), 283--293 (1979).

\bibitem{des90} N.\ Desolneux-Moulis, 
``Singular Lagrangian foliation associated to an integrable Hamiltonian vector field,''
MSRI Publ. {\bf 20}, 129--136 (1990).

\bibitem{dmt94} J.-P.\ Dufour, P.\ Molino, and A.\ Toulet, 
``Classification des syst\`emes int\'egrables en dimension 2 et invariants des mod\`eles de Fomenko,''
C.R.\ Acad.\ Sci. S\'er.\ I  Math. {\bf 318}(10), 949--952 (1994).

\bibitem{dui} J.~J.\ Duistermaat, 
``Bifurcations of periodic solutions near equilibrium points of Hamiltonian systems,''
in {\em Bifurcation Theory and Applications}, ed.\ L.\ Salvadori 
(Springer, Berlin, Heidelberg, 1984), Lecture Notes in Mathematics, Vol.~1057.

\bibitem{efs:gia} K.\ Efstathiou and A.\ Giacobbe, 
``The topology associated with cusp singular points,''
Nonlinearity {\bf 25}, 3409--3422 (2012).

\bibitem{eli} L.H.\ Eliasson, 
``Normal form for Hamiltonian systems with Poisson commuting integrals -- elliptic case,''
Comm.\ Math.\ Helv. {\bf 65}, 4--35 (1990).

\bibitem{fom} A.~T.\ Fomenko, 
``Topological invariants of Liouville integrable Hamiltonian systems,''
Funct.\ Anal.\ Appl. {\bf 22}(4), 286--296 (1988).

\bibitem{fom91} A.~T.\ Fomenko, 
``The theory of invariants of multidimensional integrable Hamiltonian systems (with arbitrary many degrees of freedom). Molecular table of all integrable systems with two degrees of freedom,'' 
in {\em Topological Classification of Integrable Systems}, ed.\ A.~T.\ Fomenko
(Amer.\ Math.\ Soc., Providence, RI, 1991), Adv.\ Sov.\ Math., Vol.~6, pp.~1--35.

\bibitem {gia} A.\ Giacobbe, 
``Infinitesimally stable and unstable singularities of 2-degrees of freedom completely integrable systems,'' 
Reg.\ \& Chaot.\ Dyn. {\bf 12}(6), 717--731 (2007).

\bibitem {han07} H.\ Han\ss mann, 
{\em Local and Semi-Local Bifurcations in Hamiltonian Dynamical Systems -- Results and Examples}
(Springer, Berlin, Heidelberg, 2007), Lecture Notes in Mathematics, Vol.~1893.

\bibitem{ito91} H.\ Ito, 
``Action-angle coordinates at singularities for analytic integrable systems,''
Math.\ Z. {\bf 206}(3), 363--407 (1991).

\bibitem {izo11} A.~M.\ Izosimov, 
``Classification of almost toric singularities of Lagrangian foliations,''
Sb.\ Math. {\bf 202}(7), 1021--1042 (2011).

\bibitem {izos:diss} A.~M.\ Izosimov,
{\em Focus singularities of integrable Hamiltonian systems} (Moscow State University, Moscow, 2011), PhD Thesis. 
\url{http://dfgm.math.msu.su/files/0diss/diss-izosimov.pdf}

\bibitem{kal} V.~V.\ Kalashnikov, 
``Typical integrable Hamiltonian systems on a four-dimensional symplectic manifold,''
Izvestiya: Mathematics {\bf 62}(2), 261--285 (1998).

\bibitem{kal:thesis} V.~V.\ Kalashnikov, 
{\em Singularities of integrable Hamiltonian systems},
PhD Thesis (Moscow State Univ., Moscow, 1998).

\bibitem {kha83} M.~P.\ Kharlamov, 
``Bifurcation of common levels of first integrals of the Kovalevskaya problem,''
J.\ Appl.\ Math.\ Mech. {\bf 47}(6), 737--743 (1983).

\bibitem{koz:osh} 
I.~Kozlov and A.~Oshemkov, 
``Integrable systems with linear periodic integral for the Lie algebra $e(3)$,''
Lobachevskii J.\ Math. {\bf 38}(6), 1014--1026 (2017).

\bibitem{koz:osh20} 
I.~Kozlov and A.~Oshemkov,
``Classification of singularities of saddle-focus type,''
Chebyshev Sbornik {\bf 21}(2), 228--243 (2020).

\bibitem{kud:lep11} E.~A.\ Kudryavtseva and T.~A.\ Lepskii, 
``The topology of Lagrangian foliations of integrable systems with hyperelliptic Hamiltonian,''
Sb.\ Math. {\bf 202}(3), 373--411 (2011). 

\bibitem{kud12} E.~A.\ Kudryavtseva, 
``An analogue of the Liouville theorem for integrable Hamiltonian systems with incomplete flows,'' Dokl.\ Math. {\bf 86}(1), 527--529 (2012). 

\bibitem {kud:osh} E.~A.\ Kudryavtseva and A.~A.\ Oshemkov, 
``Bifurcations of integrable mechanical systems with magnetic field on surfaces of revolution,'' 
Chebyshev Sbornik {\bf 21}(2), 244--265 (2020).

\bibitem {kud:toric}
E.~A.\ Kudryavtseva, 
``Hidden toric symmetry and structural stability of singularities in integrable systems,''
Europ.\ J.\ Math. \url{https://doi.org/10.1007/s40879-021-00501-9} (published 25 October 2021), 63 pp.\ (2021).

\bibitem{kud:mar21} E.~A.\ Kudryavtseva and N.~N.\ Martynchuk,
``Existence of a smooth Hamiltonian circle action near parabolic orbits and cuspidal tori,''
Regular and Chaotic Dynamics {\bf 26}(6), 732--741 (2021).

\bibitem{kud:mar21a} E.\ Kudryavtseva and N.\ Martynchuk, 
``$C^\infty$ symplectic invariants of parabolic orbits and flaps in integrable Hamiltonian systems,'' 
\url{http://arxiv.org/abs/2110.13758} (2021).

\bibitem {ler87} L.~M.\ Lerman and Ya.~L.\ Umanskii, 
``The structure of a Poisson action of $\mathbb{R}^2$ on a four-dimensional symplectic manifolds,''
Selecta Math.\ Sov. (transl.\ from Russian preprint of 1981) {\bf 6}, 365--396 (1987).

\bibitem {ler:uma2} L.~M.\ Lerman and Ya.~L.\ Umanski\u\i,
``Classification of four-dimensional integrable {H}amiltonian systems and {P}oisson actions of $\mathbb{R}^2$ in extended neighborhoods of simple singular points. {I},''
Russian Acad.\ Sci.\ Sb.\ Math. {\bf 77}(2), 511--542 (1994).

\bibitem {ler94} L.~M.\ Lerman and Ya.~L.\ Umanskii, 
``Isoenergetic classification of integrable Hamiltonian systems in a neighborhood of a simple elliptic point,'' 
Math.\ Notes {\bf 55}(5), 496--501 (1994).

\bibitem {ler00} L.~M.\ Lerman,
``Isoenergetical structure of integrable Hamiltonian systems in an extended neighbourhood of a simple singular point: three degrees of freedom,''
Amer.\ Math.\ Soc.\ Transl. {\bf 200}(2), 219--242 (2000).

\bibitem {mat96} V.~S.\ Matveev, 
``Integrable Hamiltonian system with two degrees of freedom. The topological structure of saturated neighbourhoods of points of focus-focus and saddle-saddle type,''
Sb.\ Math. {\bf 187}(4), 495--524 (1996).

\bibitem {zung:mir04} E.\ Miranda and N.~T.\ Zung, 
``Equivariant normal form for non-degenerate singular orbits of integrable Hamiltonian systems,''
Ann.\ Sci.\ \'Ec.\ Norm.\ Sup.\ {\bf 37}(6), 819--839 (2004).

\bibitem{osh2010} A.~A.\ Oshemkov, 
``Classification of hyperbolic singularities of rank zero of integrable Hamiltonian systems,''
Sb.\ Math. {\bf 201}(8), 1153--1191 (2010).

\bibitem{osh2011} A.~A.\ Oshemkov, 
``Saddle singularities of complexity 1 of integrable Hamiltonian systems,''
Moscow Univ.\ Math.\ Bull. {\bf 66}(2), 60--69 (2011).

\bibitem{osh-tuz} A.~A.\ Oshemkov and M.~A.\ Tuzhilin, 
``Integrable perturbations of saddle singularities of rank 0 of integrable Hamiltonian systems,''
Sb.\ Math. {\bf 209}(9), 1351--1375 (2018).

\bibitem {vdm} J.-C.\ van der Meer, 
{\em The Hamiltonian Hopf bifurcation} 
(Springer, Berlin, Heidelberg, 1985), Lecture Notes in Mathematics, Vol.~1160.

\bibitem{var:giv82} A.~N.\ Varchenko and A.~B.\ Givental',
``Mapping of periods and intersection form,''
Funct.\ Anal.\ Appl. {\bf 16}(2), 83--93 (1982). 

\bibitem{vey} J.\ Vey, 
``Sur certaines syst\`emes dynamiques s\'eparables,'' 
Amer.\ J.\ Math. {\bf 100}(3), 591--614 (1978).

\bibitem{vu2003} S.\ V\~u Ng\d oc, 
``On semi-global invariants for focus-focus singularities,''
Topology {\bf 42}, 365--380 (2003).

\bibitem{vu:wac2013} S.\ V\~u Ng\d oc and Ch.\ Wacheux,
``Smooth normal forms for integrable Hamiltonian systems near a focus-focus singularity,''
Acta Mathematica Vietnamica {\bf 38}, 107--122 (2013).

\bibitem{zol} H.\ Zoladek, 
{\em The monodromy group} (Birkh\"auser, Basel, 2006), Vol.~67, 583 pp.

\bibitem{zung96} N.~T.\ Zung, 
``Decomposition of non-degenerate singularities of integrable Hamiltonian systems,''
Lett.\ Math.\ Phys. {\bf 33}, 187--193 (1995).

\bibitem{zung96a} N.~T.\ Zung, 
``Symplectic topology of integrable Hamiltonian systems, I: Arnold-Liouville with singularities,''
Compositio Math. {\bf 101}, 179--215 (1996).

\bibitem{zung00} N.~T.\ Zung, 
``A note on degenerate corank-one singularities of integrable Hamiltonian systems,''
Comment.\ Math.\ Helv. {\bf 75}, 271--283 (2000).

\bibitem {zung02} N.~T.\ Zung,
``Convergence versus integrability in Poincar\'e-Dulac normal form,''
Math.\ Res.\ Lett. {\bf 9}, 217--228 (2001).

\bibitem {zung03} N.~T.\ Zung, 
``Actions toriques et groupes d'automorphismes de singularit\'es de syst\`emes dynamiques int\'egrables,''
C.R.\ Math.\ Acad.\ Sci.\ Paris {\bf 336}(12), 1015--1020 (2003).

\bibitem {zung03a} N.~T.\ Zung, 
``Symplectic topology of integrable Hamiltonian systems, II: Topological classification,''
Compositio Math. {\bf 138}, 125--156 (2003).

\bibitem {zung04} N.~T.\ Zung, 
``Torus actions and integrable systems,''
in {\em Topological Methods in the Theory of Integrable Systems}, eds.\ A.~V.\ Bolsinov, A.~T.\ Fomenko, and A.~A.\ Oshemkov
(Cambridge Scientific Publications, Cambridge, 2006), pp.~289--328. 

\bibitem {zung05} N.~T.\ Zung, 
``Convergence versus integrability in Birkhoff normal forms,''
Ann.\ Math. {\bf 161}, 141--156 (2005).

\end{thebibliography}
\end{document}